\numberwithin{equation}{section}
\numberwithin{figure}{section}
\theoremstyle{plain}
\newtheorem{thm}{\protect\theoremname}[section]
\theoremstyle{definition}
\newtheorem{defn}[thm]{\protect\definitionname}
\theoremstyle{remark}
\newtheorem{rem}[thm]{\protect\remarkname}
\theoremstyle{plain}
\newtheorem{lem}[thm]{\protect\lemmaname}
\theoremstyle{plain}
\newtheorem{cor}[thm]{\protect\corollaryname}
\newenvironment{proof}[1][\protect\proofname]{\par
	\normalfont\topsep6\p@\@plus6\p@\relax
	\trivlist
	\itemindent\parindent
	\item[\hskip\labelsep\scshape #1]\ignorespaces
}{%
	\endtrivlist\@endpefalse
}
\providecommand{\proofname}{Proof}
\theoremstyle{plain}
\newtheorem{prop}[thm]{\protect\propositionname}
\newcommand\bG{{\mathbb G}}
\newcommand\bH{{\mathbb H}}
\newcommand\bK{{\mathbb K}}
\newcommand\bR{{\mathbb R}}
\newcommand\cD{{\mathcal D}}
\newcommand\cO{{\mathcal O}}
\newcommand\cT{{\mathcal T}}
\let\mathcal=\CMcal
\DeclareSymbolFont{YHlargesymbols}{OMX}{yhex}{m}{n}
\DeclareMathAccent{\wideparen}{\mathord}{YHlargesymbols}{"F3}
\DeclareMathOperator{\Mor}{Mor}
\DeclareMathOperator{\Inv}{Inv}
\providecommand{\corollaryname}{Corollary}
\providecommand{\definitionname}{Definition}
\providecommand{\lemmaname}{Lemma}
\providecommand{\propositionname}{Proposition}
\providecommand{\remarkname}{Remark}
\providecommand{\theoremname}{Theorem}
\providecommand{\definitionname}{Definition}
\providecommand{\lemmaname}{Lemma}
\providecommand{\propositionname}{Proposition}
\providecommand{\remarkname}{Remark}
\providecommand{\theoremname}{Theorem}
\renewcommand{\S}{Section~}
\begin{document}
\global\long\def\e{\varepsilon}%
\global\long\def\N{\mathbb{N}}%
\global\long\def\Z{\mathbb{Z}}%
\global\long\def\Q{\mathbb{Q}}%
\global\long\def\R{\mathbb{R}}%
\global\long\def\C{\mathbb{C}}%
\global\long\def\G{\mathbb{G}}%
\global\long\def\QG{\mathbb{G}}%
\global\long\def\QH{\mathbb{H}}%
\global\long\def\HH{\mathbb{H}}%
\global\long\def\bn{\mathbb{N}}%
\global\long\def\br{\mathbb{R}}%
\global\long\def\bc{\mathbb{C}}%
\global\long\def\bt{\mathbb{T}}%
\global\long\def\H{\EuScript H}%
\global\long\def\J{\mathcal{J}}%
\global\long\def\K{\mathcal{K}}%
\global\long\def\KHilb{\EuScript K}%
\global\long\def\a{\alpha}%
\global\long\def\be{\beta}%
\global\long\def\l{\lambda}%
\global\long\def\om{\omega}%
\global\long\def\z{\zeta}%
\global\long\def\gnsmap{\operatorname{\upeta}}%
\global\long\def\Aa{\mathcal{A}}%
\global\long\def\Aalg{\mathsf{A}}%
\global\long\def\Sant{\mathtt{S}}%
\global\long\def\Rant{\mathtt{R}}%
\global\long\def\Ree{\operatorname{Re}}%
\global\long\def\Img{\operatorname{Im}}%
\global\long\def\linspan{\operatorname{span}}%
\global\long\def\supp{\operatorname{supp}}%
\global\long\def\slim{\operatorname*{s-lim}}%
\global\long\def\clinspan{\operatorname{\overline{span}}}%
\global\long\def\co{\operatorname{co}}%
\global\long\def\pres#1#2#3{\prescript{#1}{#2}{#3}}%
\global\long\def\tensor{\otimes}%
\global\long\def\tensormin{\mathbin{\otimes_{\mathrm{min}}}}%
\global\long\def\tensorn{\mathbin{\overline{\otimes}}}%
\global\long\def\A{\forall}%
\global\long\def\i{\mathrm{id}}%
\global\long\def\tr{\operatorname{tr}}%
\global\long\def\one{\mathds{1}}%
\global\long\def\Ww{\mathds{W}}%
\global\long\def\wW{\text{\reflectbox{\ensuremath{\Ww}}}\:\!}%
\global\long\def\op{\mathrm{op}}%
\global\long\def\WW{{\mathds{V}\!\!\text{\reflectbox{\ensuremath{\mathds{V}}}}}}%
\global\long\def\Vv{\mathds{V}}%
\global\long\def\vV{\text{\reflectbox{\ensuremath{\Vv}}}\:\!}%
\global\long\def\M#1{\operatorname{M}(#1)}%
\global\long\def\Linfty#1{L^{\infty}(#1)}%
\global\long\def\Lone#1{L^{1}(#1)}%
\global\long\def\Lp#1{L^{p}(#1)}%
\global\long\def\Lq#1{L^{q}(#1)}%
\global\long\def\LoneSharp#1{L_{\sharp}^{1}(#1)}%
\global\long\def\Ltwo#1{L^{2}(#1)}%
\global\long\def\Cz#1{\mathrm{C}_{0}(#1)}%
\global\long\def\CzU#1{\mathrm{C}_{0}^{\mathrm{u}}(#1)}%
\global\long\def\CzUSSharp#1{\mathrm{C}_{0}^{\mathrm{u}}(#1)_{\sharp}^{*}}%
\global\long\def\CU#1{\mathrm{C}^{\mathrm{u}}(#1)}%
\global\long\def\Cb#1{\mathrm{C}_{b}(#1)}%
\global\long\def\CStarF#1{\mathrm{C}^{*}(#1)}%
\global\long\def\CStarR#1{\mathrm{C}_{\mathrm{r}}^{*}(#1)}%
\global\long\def\Cc#1{\mathrm{C}_{c}(#1)}%
\global\long\def\CC#1{\mathrm{C}(#1)}%
\global\long\def\CzX#1#2{\mathrm{C}_{0}^{#1}(#2)}%
\global\long\def\CcX#1#2{\mathrm{C}_{c}^{#1}(#2)}%
\global\long\def\CcTwo#1{\mathrm{C}_{c}^{2}(#1)}%
\global\long\def\aa#1{\mathfrak{a}_{#1}}%
\global\long\def\linfty#1{\ell^{\infty}(#1)}%
\global\long\def\lone#1{\ell^{1}(#1)}%
\global\long\def\ltwo#1{\ell^{2}(#1)}%
\global\long\def\cz#1{\mathrm{c}_{0}(#1)}%
\global\long\def\Pol#1{\mathrm{Pol}(#1)}%
\global\long\def\Ltwozero#1{L_{0}^{2}(#1)}%
\global\long\def\Irred#1{\mathrm{Irred}(#1)}%
\global\long\def\conv{\star}%
\global\long\def\Ad#1{\mathrm{Ad}(#1)}%
\global\long\def\VN#1{\mathrm{VN}(#1)}%
\global\long\def\d{\,\mathrm{d}}%
\global\long\def\t{\mathrm{t}}%
\global\long\def\tie#1{\wideparen{#1}}%
\global\long\def\caret{\text{\textasciicircum}}%
\global\long\def\epp#1{#1{}_{+}^{\caret}}%
\global\long\def\tp{\mathbin{\xymatrix{*+<.7ex>[o][F-]{\scriptstyle \top}}
 } }%
\global\long\def\tpsmall{\mathbin{\xymatrix{*+<.5ex>[o][F-]{\scriptscriptstyle \top}}
 } }%
\global\long\def\tpr{\mathbin{\xymatrix{*+<.7ex>[o][F-]{\scriptstyle \bot}}
 } }%
\global\long\def\tprsmall{\mathbin{\xymatrix{*+<.5ex>[o][F-]{\scriptscriptstyle \bot}}
 } }%


\newcommand{\define}[1]{{\em #1}}

\newcommand{\cat}[1]{\textsc{#1}}

\renewcommand{\square}{\mathrel{\Box}}

\newcommand{\mike}[1]{{\bf \color{red} **{#1}**}}
\newcommand{\alex}[1]{{\bf \color{green} **{#1}**}}
\newcommand{\ami}[1]{{\bf \color{cyan} **{#1}**}}

\newcommand\Section[1]{\section{#1}\setcounter{lemma}{0}}

\newcommand\li[1]{\Linfty{#1}}

\title{Actions, quotients and lattices of locally compact quantum groups}

\author{Michael Brannan}
\address{Department of Mathematics, Texas A\&M University, College Station,
TX 77843-3368, USA}
\email{mbrannan@math.tamu.edu}
\author{Alexandru Chirvasitu}
\address{Department of Mathematics, University at Buffalo, Buffalo, NY 14260-2900,
USA}
\email{achirvas@buffalo.edu}
\author{Ami Viselter}
\address{Department of Mathematics, University of Haifa, 31905 Haifa, Israel}
\email{aviselter@univ.haifa.ac.il}

\begin{abstract}
  We prove a number of property (T) permanence results for locally compact quantum groups under exact sequences and the presence of invariant states, analogous to their classical versions. Along the way we characterize the existence of invariant weights on quantum homogeneous spaces of quotient type, and relate invariant states for LCQG actions on von Neumann algebras to invariant vectors in canonical unitary implementations, providing an application to amenability. Finally, we introduce a notion of lattice in a locally compact quantum group, noting examples provided by Drinfeld doubles of compact quantum groups. We show that property (T) lifts from a lattice to the ambient LCQG, just as it does classically, thus obtaining new examples of non-classical, non-compact, non-discrete LCQGs with property (T).
\end{abstract}

\maketitle

\noindent {\em Key words: locally compact quantum group, property (T), lattice, weight, canonical implementation, closed quantum subgroup}

\vspace{.5cm}

\noindent{MSC 2010: 20G42; 46L65; 46L30}

\tableofcontents

\section*{Introduction}

The axiomatization of locally compact quantum groups (or LCQGs for short) has reached a stable state with the advent of \cite{Kustermans_Vaes__LCQG_C_star,Kustermans_Vaes__LCQG_von_Neumann,Kustermans__LCQG_universal, wor-mult-wuo,wor-mult-vLr,mnw,wor-mult-PxG} (based on earlier work such as \cite{bs-bYv}), making the field a rich source of examples, questions and problems pertaining to representation theory, operator algebras, geometric group theory and affiliated subjects.

The present paper is motivated primarily by prior work on property (T) for LCQGs. In various degrees of generality, in the quantum setting this representation-theoretic rigidity property has been studied in a number of sources: \cite{Petrescu_Joita__prop_T_Kac_alg} for Kac algebras, \cite{Bedos_Conti_Tuset__amen_co_amen_alg_QGs_coreps} in the algebraic setting, \cite{Fima__prop_T,kyed-coh,Kyed_Soltan__prop_T_exotic_QG_norms} for {\it discrete} quantum groups and finally \cite{Daws_Fima_Skalski_White_Haagerup_LCQG,Daws_Skalski_Viselter__prop_T,Brannan_Kerr__QG_T_WM} in full generality, for LCQGs. A number of papers address the problem of constructing examples of LCQGs with property (T), e.g.~\cite{Arano__unit_sph_rep_Drinfeld_dbls,Fima_Mukherjee_Patri_compact_bicross_prod,Vaes_Valvekens__prop_T_DQG_subfactor_tri_rep}.

We are concerned here with the ``hereditary'' character of property (T), i.e.~its preservation under passing to appropriate subgroups, quotients, extensions, etc. One familiar result is that for a short exact sequence
\begin{equation*}
  1\to H\to G\to G/H\to 1
\end{equation*}
of locally compact groups, $G$ has property (T) if and only if both $G/H$ and the pair $(G,H)$ do (see for instance \cite[Exercise 1.8.12]{Bekka_de_la_Harpe_Valette__book} or the somewhat weaker version in \cite[Lemma 7.4.1]{zim-erg}). The corresponding result for {\it discrete} quantum groups was proven in \cite{Bhattacharya_Brannan_Chirvasitu_Wang__rig_soft_prop_resid_fin_DQG}, and we generalize it here in full for arbitrary short exact sequences of LCQGs in \prettyref{sec:property-t-exact}.

Another celebrated classical result with deep ramifications is the equivalence, for a locally compact group $G$, of property (T) for $G$ and for any of its {\it finite-covolume} closed subgroups $H\le G$, i.e.~those for which the homogeneous space $G/H$ admits a finite $G$-invariant regular Borel measure \cite[Theorem 1.7.1]{Bekka_de_la_Harpe_Valette__book}. This affords deducing property (T) for certain discrete groups realizable as {\it lattices} (closed, discrete, finite-covolume subgroups) of Lie groups.

We prove an analog of this finite-covolume permanence result in \prettyref{thm:covol} below: property (T) lifts from finite-covolume closed quantum subgroups. Given that, by \prettyref{thm:dbl-inv}, a Kac-type discrete quantum group $\widehat{\bG}$ is a lattice in the Drinfeld double $\cD\bG$ of the corresponding compact quantum group $\bG$, this provides examples of non-discrete quantum groups $\cD\bG$ with property (T).


Along the way towards the above-mentioned property (T) permanence statements we prove a number of auxiliary results that we hope might have wider applicability as general-purpose tools in dealing with restrictions of unitary representations to closed quantum subgroups $\bH\le \bG$ of LCQGs, and also with invariance properties for measures on quantum homogeneous spaces $\bG/\bH$. The unifying thread throughout is that of LCQG actions on von Neumann algebras, with canonical unitary implementations and invariant weights / states playing a central role in the discussion. 

A more detailed, albeit brief summary of the contents of the paper follows.

In \prettyref{sec:prelim} we gather some of the requisite preliminary material on the structure of locally compact quantum groups. 

The main result of \prettyref{sec:property-t-exact} is \prettyref{thm:ext}, the analog of the classical result stating that given a closed normal subgroup $\bH\trianglelefteq \bG$ of a locally compact group, property (T) for $\bG$ is equivalent to property (T) for the pair $(\bG,\bH)$ and the quotient $\bG/\bH$ (see e.g.~\cite[Exercise 1.8.12]{Bekka_de_la_Harpe_Valette__book}).

In the process of proving that result we provide, in \prettyref{prop:fact}, a characterization of unitary representations of $\bG$ {\it factoring through} a quotient $\bG/\bH$ for normal $\bH\trianglelefteq \bG$. Although relatively simple and unsurprising, we were not able to find the remark in the literature; it would presumably be of some independent interest in its own right.

In \prettyref{sec:invariant-weights} we turn to invariant measures on homogeneous spaces $\bG/\bH$. For LCQGs this translates to invariant normal semi-finite faithful weights on the von Neumann algebra $\li{\bG/\bH}$. The main result of the section is \prettyref{thm:quot_inv_nsf_weight}, where we prove the analog of the classical characterization of inclusions $\bH\le \bG$ for which $\bG/\bH$ has a $\bG$-invariant measure: they are precisely those for which the modular function of $\bG$ restricts to the modular function of $\bH$ \cite[Corollary B.1.7]{Bekka_de_la_Harpe_Valette__book}.

The invariant measure theme recurs in \prettyref{sec:canon-impl-invar}, where we prove in \prettyref{thm:inv-can} that given an action by a LCQG $\bG$ on a von Neumann algebra $N$, a normal state on $N$ is invariant under the action if and only if the corresponding vector is invariant under the canonical unitary implementation of the action. This result was previously known to hold for \emph{discrete} quantum groups \cite[Proposition 4.11]{Daws_Skalski_Viselter__prop_T}. It has some applications to characterizing a strong form of amenability for unitary representations by means of invariant vectors, as we discuss in \prettyref{thm:amnbl-chars}. 

\prettyref{sec:fin_covol_lat} revolves around closed quantum subgroups $\HH \le \G$ of \emph{finite covolume}, that is, such that $\bG/\bH$ admits a $\bG$-invariant normal state.
We prove in \prettyref{thm:finite-covol-unim} that in this case, the unimodularity of $\bH$ and $\bG$ are equivalent.
We then define {\it lattices} of locally compact quantum groups as discrete closed quantum subgroups of finite covolume (see \prettyref{def:lat}). By \prettyref{thm:dbl-inv}, examples include the discrete ``halves'' of Drinfeld doubles of Kac-type compact quantum groups. In \prettyref{thm:covol} we obtain the quantum counterpart of the result that property (T) lifts along inclusions with finite covolume, and transfers from lattices to the ambient LCQGs.

\section{Preliminaries}\label{sec:prelim}

All Hilbert spaces in the paper are complex, and the inner products are linear in the left variable. For a Hilbert space $\H$ and $\z,\eta\in\H$, let $\om_{\z,\eta} \in B(\H)_*$ be defined by $B(\H) \ni T \mapsto \langle T\z,\eta \rangle$, and set $\om_\z := \om_{\z,\z}$.
Representations of C$^{*}$-algebras will always be assumed non-degenerate, and the units will be denoted by $\one$ when they exist. The symbol $\tensor$ is reserved for the tensor product of Hilbert spaces and maps, $\tensormin$ stands for the minimal tensor product of C$^{*}$-algebras, and $\tensorn$ is designated for the normal tensor product of von Neumann algebras. For a C$^{*}$-algebra $A$, write $\M{A}$ for its multiplier algebra. For C$^{*}$-algebras $A$ and $B$, a \emph{morphism} from $A$ to $B$ is a  $*$-homomorphism $\Phi:A \to \M{B}$ that is non-degenerate, i.e., $\Phi(A)B$ is total in $B$. The set of all such morphisms is denoted by $\Mor(A,B)$.

We assume familiarity with modular theory of von Neumann algebras \citep{Stratila__mod_thy,Takesaki__Tomita_thy,Takesaki__book_vol_2}, including the theory of operator-valued weights (originally \citep{Haagerup__oper_val_weights_1,Haagerup__oper_val_weights_2}). The extended positive part of a von Neumann algebra $M$ is denoted by $\widehat{M}_{+}$ or $\epp{M}$. For a normal semi-finite faithful (n.s.f.)~weight $\varphi$ on $M$, let 
\[
  \mathcal{M}_{\varphi}^{+} := \{x \in M_+ : \varphi(x) < \infty\}, \quad \mathcal{M}_{\varphi} := \linspan \mathcal{M}_{\varphi}^{+}, \quad \mathcal{N}_{\varphi} := \{x \in M : x^*x \in \mathcal{M}_{\varphi}^{+}\},
\]
and denote the GNS Hilbert space, GNS map, modular conjugation, modular operator and modular automorphism group of $\varphi$ by $\Ltwo{M,\varphi}$, $\gnsmap_\varphi$, $J_\varphi$, $\nabla_\varphi$ and $\left(\sigma_{t}^\varphi\right)_{t\in\R}$, respectively, and write $T_\varphi:=J_\varphi\nabla_\varphi^{1/2}$.

Unless otherwise indicated, the following preliminaries on locally compact quantum groups are taken from \cite{Kustermans_Vaes__LCQG_C_star,Kustermans_Vaes__LCQG_von_Neumann,Kustermans__LCQG_universal}. They are far from being exhaustive, and we refer to the original articles for more details.

\begin{defn}\label{def:lcqg}
  A \emph{locally compact quantum group} (in short, LCQG) is a pair $\G = (M,\Delta)$ such that:
  \begin{enumerate}
  \item $M$ is a von Neumann algebra;
  \item $\Delta$, called the \emph{co-multiplication} of $\G$, is a unital normal $*$-homomorphism from $M$ to $M \tensorn M$ that is co-associative: $(\Delta \tensor \i) \Delta = (\i \tensor \Delta) \Delta$;
  \item $M$ admits n.s.f.~weights $\varphi,\psi$, called the \emph{Haar weights}, that are left and right invariant, respectively, in the sense that
    \begin{gather*}
      \varphi((\om \tensor \i)(\Delta(x))) = \varphi(x)\om(\one) \qquad (\forall x \in \mathcal{M}_{\varphi}^{+}, \om \in M_*^+), \\
      \psi((\i \tensor \om)(\Delta(x))) = \psi(x)\om(\one) \qquad (\forall x \in \mathcal{M}_{\psi}^{+}, \om \in M_*^+).
    \end{gather*}
  \end{enumerate}
  We set $\Linfty{\G} := M$, $\Lone{\G} := M_*$ and $\Ltwo{\G} := \Ltwo{M,\varphi}$.
\end{defn}

The easiest example of LCQGs comes from locally compact groups $G$: indeed, just take the usual $\Linfty{G}$ with the co-multiplication $\Delta : \Linfty{G} \to \Linfty{G \times G} \cong \Linfty{G} \tensorn \Linfty{G}$ given by $(\Delta(f))(s,t) := f(st)$ for $f \in \Linfty{G}$ and $s,t \in G$.

Every LCQG $\G$ has a \emph{dual} LCQG, denoted by $\widehat{\G}$. We will not explain here how this duality works, but mention the double dual property: $\widehat{\widehat{\G}} = \G$, and the fact that this duality extends Pontryagin's duality for locally compact abelian groups. Elements pertaining to the dual $\widehat{\G}$ will be decorated with a hat, e.g.~$\widehat{\varphi}$. Remark that we can and will identify $\Ltwo{\widehat{\G}}$ with $\Ltwo{\G}$. There exists a (multiplicative) unitary $W \in \Linfty{\bG} \tensorn \Linfty{\widehat{\G}}$, called the \emph{left regular representation} of $\G$. It implements $\Delta$ in the sense that $\Delta(x) = W^* (\one \tensor x) W$ for all $x \in \Linfty{\G}$. Furthermore, we have $\widehat{W} = \sigma(W^*)$, where $\sigma$ is the flip automorphism. 

There are also two C$^*$-algebraic ``pictures'' of a LCQG $\G$. The set
\begin{equation*}
  \Cz{\G} := \overline{\{(\i \tensor \om)(W) : \om \in \Lone{\widehat{\G}}\}}
\end{equation*}
is a WOT-dense C$^*$-subalgebra of $\Linfty{\G}$, called the \emph{reduced} C$^*$-algebra of $\G$. Considering $W$ as acting on $\Ltwo{\G} \tensor \Ltwo{\G}$, it belongs to both $\M{\Cz{\G} \tensormin K(\Ltwo{\G})}$ and $\M{\Cz{\G} \tensormin \Cz{\widehat{\G}}}$. Furthermore, $\Delta$ restricts to an element of $\Mor(\Cz{\G}, \Cz{\G} \tensormin \Cz{\G})$. The unitary antipode, as a $*$-anti-automorphism of either $\Cz{\G}$ or $\Linfty{\G}$, will be denoted by $R$. There is also the \emph{universal} C$^*$-algebra $\CzU{\G}$ of $\G$ with its own co-multiplication $\Delta^{\mathrm{u}} \in \Mor(\CzU{\G}, \CzU{\G} \tensormin \CzU{\G})$. It admits a universality property related to representations; see below. The \emph{reducing morphism}, which is a surjective $*$-homomorphism $\Lambda : \CzU{\G} \to \Cz{\G}$, satisfies $(\Lambda \tensor \Lambda) \circ \Delta^{\mathrm{u}} = \Delta|_{\Cz{\G}} \circ \Lambda$. There are also the universal version $\WW \in \M{\CzU{\G} \tensormin \CzU{\widehat{\G}}}$, and the two semi-universal versions $\wW,\Ww$, of $W$. For instance, we have $\Ww \in \M{\CzU{\G} \tensormin \Cz{\widehat{\G}}}$. 

A unitary representation, or simply a \emph{representation}, of a LCQG $\G$ on a Hilbert space $\H$ is a unitary operator $U\in\Linfty{\G}\tensorn B(\H)$ satisfying $(\Delta\tensor\i)(U)=U_{13}U_{23}$, where the subscript indicates tensor product leg numbering. In fact, we automatically have $U\in\M{\Cz{\G}\tensormin K(\H)}$. There is a bijection between the representations of $\G$ and the representations of the C$^{*}$-algebra $\CzU{\widehat \G}$ associating $\Phi \in \Mor(\CzU{\widehat \G}, K(\H))$ to $(\i \tensor \Phi)(\wW) \in \M{\Cz{\G}\tensormin K(\H)}$, for a Hilbert space $\H$. From the dual side, the left regular representation $\widehat{W}$ of $\widehat{\G}$ and the trivial representation $\one \in \Linfty{\widehat{\G}}$ of $\widehat{\G}$ correspond to the reducing morphism $\Lambda$ and to the \emph{co-unit} $\epsilon$ of $\G$.

Let $U,V$ be representations of a LCQG $\G$ on Hilbert spaces $\H,\KHilb$, respectively. The \emph{contragradient} of $U$ is the representation $\overline{U} := (R \tensor \top)(U)$ of $\G$ on $\overline{\H}$, where $\overline{\H}$ is the (complex) conjugate Hilbert space of $\H$ and $\top:B(\H) \rightarrow B(\overline{\H})$ is the transpose map, defined by
$\top(x)(\overline{\xi}) = \overline{ x^*(\xi) }$ for $x \in B(\H)$ and $\xi \in \H$. We can \emph{tensor} $U$ and $V$ in two ways, yielding the following representations of $\G$ on $\H \tensor \KHilb$:
\[
U \tp V := U_{12} V_{13} \text{ and } U \tpr V := V_{13} U_{12}
\]
(we warn the reader that the meaning of the notation $\tpr$ is not consistent with the one in \cite{Woronowicz__CMP}). We have $\overline{\overline{U}} = U$ and $\overline{U \tp V} = \overline{U} \tpr \overline{V}$ (identifying $\overline{\H \tensor \KHilb} \cong \overline{\H} \tensor \overline{\KHilb}$).

Actions are of basic importance in this paper. A \emph{left} (resp., \emph{right}) \emph{action} of a LCQG $\G$ on a von Neumann $N$ is an injective normal unital $*$-homomorphism $\a : N \to \Linfty{\G} \tensorn N$ (resp., $\a : N \to N \tensorn \Linfty{\G}$) such that $(\i \tensor \a)  \a = (\Delta \tensor \i) \a$ (resp., $(\a \tensor \i)\a = (\i \tensor \Delta)\a$).

We require some material on homomorphisms between LCQGs from \cite[\S 12]{Kustermans__LCQG_universal}, \cite{Meyer_Roy_Woronowicz__hom_quant_grps} and \cite[Subsection 1.3]{Daws_Kasprzak_Skalski_Soltan__closed_q_subgroups_LCQGs} (note the different conventions regarding $W$ being the left/right regular representation). For LCQGs $\G,\HH$, there is a 1-1 correspondence between the following classes of objects:
\begin{enumerate}
\item strong quantum homomorphisms: elements $\pi \in \Mor(\CzU{\G},\CzU{\HH})$ that intertwine the co-multiplications: $(\pi \tensor \pi) \circ \Delta_{\G}^\mathrm{u} = \Delta_{\HH}^\mathrm{u} \circ \pi$;
\item left quantum homomorphisms: elements $\rho_l \in \Mor(\Cz{\G}, \Cz{\HH} \tensormin \Cz{\G})$ satisfying $(\i \tensor \Delta_{\G}) \circ \rho_l = (\rho_l \tensor \i) \circ \Delta_{\G}$ and $(\Delta_{\HH} \tensor \i) \circ \rho_l = (\i \tensor \rho_l) \circ \rho_l$;
\item right quantum homomorphisms: elements $\rho_r \in \Mor(\Cz{\G}, \Cz{\G} \tensormin \Cz{\HH})$ satisfying $(\Delta_{\G} \tensor \i) \circ \rho_r = (\i \tensor \rho_r) \circ \Delta_{\G}$ and $(\i \tensor \Delta_{\HH}) \circ \rho_r = (\rho_r \tensor \i) \circ \rho_r$.

\end{enumerate}
These objects describe a \emph{homomorphism from $\HH$ to $\G$}. In fact, $\rho_l$, resp.~$\rho_r$, extends (uniquely) to a left, resp.~right, action of $\HH$ on the von Neumann algebra $\Linfty{\G}$, and $\pi,\rho_l,\rho_r$ are related to one another by the identities
\begin{align}
	\rho_l \circ \Lambda_{\G} & = ((\Lambda_{\HH} \circ \pi) \tensor \Lambda_{\G}) \circ \Delta^{\mathrm{u}}_\G, \label{eq:left_quant_hom__comult} \\
	\rho_r \circ \Lambda_{\G} & = (\Lambda_{\G} \tensor (\Lambda_{\HH} \circ \pi)) \circ \Delta^{\mathrm{u}}_\G, \label{eq:right_quant_hom__comult} \\
	\rho_l & = (R_\HH \tensor R_\G) \circ \sigma \circ \rho_r \circ R_\G \label{eq:left_right_quant_homs}
\end{align}
(in particular, see \cite[Theorem 5.3 equation (33) and Theorem 5.5 equation (35)]{Meyer_Roy_Woronowicz__hom_quant_grps}). 
Every strong quantum homomorphism $\pi \in \Mor(\CzU{\G},\CzU{\HH})$ from $\HH$ to $\G$ has a \emph{dual} strong quantum homomorphism $\widehat{\pi} \in \Mor(\CzU{\widehat{\HH}},\CzU{\widehat{\G}})$ from $\widehat{\G}$ to $\widehat{\HH}$, which satisfies 
\begin{equation}
(\pi \tensor \i)(\WW_\G) = (\i \tensor \widehat{\pi})(\WW_\HH). \label{eq:dual_str_quant_hom}
\end{equation}
Another useful identity is
\begin{equation}
\epsilon_\HH \circ \pi = \epsilon_\G. \label{eq:counit_str_quant_hom}
\end{equation}

The following definitions and results are from \cite{Daws_Kasprzak_Skalski_Soltan__closed_q_subgroups_LCQGs}. Let again $\G,\HH$ be LCQGs. 
We say that \emph{$\HH$ is a closed quantum subgroup of $\G$} in the sense of Woronowicz, resp.~Vaes, if there exists a strong quantum homomorphism $\pi$ from $\HH$ to $\G$ such that $\pi(\CzU{\G}) = \CzU{\HH}$, resp.~if there exists a normal injective $*$-homomorphism $\gamma : \Linfty{\widehat{\HH}} \to \Linfty{\widehat{\G}}$ intertwining the co-multiplications: $(\gamma \tensor \gamma) \circ \Delta_{\widehat{\HH}} = \Delta_{\widehat{\G}} \circ \gamma$. The latter condition implies the former, and the associated strong quantum homomorphism $\pi$ satisfies
\begin{equation}
\gamma|_{\Cz{\widehat{\HH}}} \circ \Lambda_{\widehat{\HH}} = \Lambda_{\widehat{\G}} \circ \widehat{\pi}. \label{eq:Woronowicz_Vaes}
\end{equation}

Let $\HH$ be a closed quantum subgroup of a LCQG $\G$ in the sense of Woronowicz, and denote by $\a_l:\Linfty{\G}\to\Linfty{\HH}\tensorn\Linfty{\G}$ the left action of $\HH$ on $\Linfty{\G}$ (the extension of the suitable map $\rho_l$ above). The ``$L^{\infty}$ algebra of the quantum homogeneous space $\HH \backslash \G$'' is the fixed-point von Neumann algebra $\Linfty{\HH \backslash \G}:=\{x\in\Linfty{\G}:\a_l(x)=\one\tensor x\}$.  It is a right coideal, namely $\Delta_{\G}(\Linfty{\HH \backslash \G})\subseteq\Linfty{\HH \backslash \G}\tensorn\Linfty{\G}$.  Hence, $\Delta_{\G}$ restricts to a right action $\Delta_{\HH \backslash \G}$ of $\G$ on $\Linfty{\HH \backslash \G}$. Similarly, there is a right action $\a_r : \Linfty{\G}\to\Linfty{\G}\tensorn\Linfty{\HH}$ of $\HH$ on $\Linfty{\G}$, and its fixed-point algebra is denoted by $\Linfty{\G / \HH}$.

\section{Property (T) and exact sequences}\label{sec:property-t-exact}

The main result of this section is the following generalization of \cite[\S 1.7, pp.~63--64; see Exercise 1.8.12]{Bekka_de_la_Harpe_Valette__book}, which extends \cite[Proposition 4.13]{Bhattacharya_Brannan_Chirvasitu_Wang__rig_soft_prop_resid_fin_DQG} from discrete to locally compact quantum groups. The required notions will be introduced subsequently.

\begin{thm}\label{thm:ext}
  Let $\bG$ be a LCQG and $\bH\trianglelefteq \bG$ a normal closed quantum subgroup. Then $\bG$ has property (T) if and only if both $\bG/\bH$ and the pair $(\bG,\bH)$ have property (T). 
\end{thm}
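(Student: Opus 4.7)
The argument follows the classical template outlined in \cite[Exercise 1.8.12]{Bekka_de_la_Harpe_Valette__book}, recast in the LCQG language using the factorization criterion of \prettyref{prop:fact}.

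For the forward direction I would observe that property (T) of $\bG$ trivially implies property (T) of the pair $(\bG,\bH)$: any representation of $\bG$ with almost invariant vectors has $\bG$-invariant vectors, which are in particular $\bH$-invariant. For the quotient, a representation $U$ of $\bG/\bH$ pulls back along the surjection $\bG\to\bG/\bH$ to a representation $\widetilde{U}$ of $\bG$; almost invariant vectors for $U$ are almost invariant for $\widetilde{U}$, so property (T) of $\bG$ yields $\widetilde{U}$-invariant vectors, and these are automatically $U$-invariant since $\widetilde{U}$ factors through $\bG/\bH$.

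For the backward direction, let $U\in\Linfty{\bG}\tensorn B(\H)$ be a representation of $\bG$ with almost invariant unit vectors $(\xi_n)$. Consider the closed subspace $\H^{\bH}\subseteq \H$ of $\bH$-invariant vectors (for the restricted representation $U|_{\bH}$), and let $P\in B(\H)$ denote the orthogonal projection onto it. The two key structural points are: (i) normality of $\bH$ forces $\H^{\bH}$ to be $\bG$-invariant, equivalently $U(\one\tensor P)=(\one\tensor P)U$; and (ii) by \prettyref{prop:fact}, the compression $U(\one\tensor P)\in \Linfty{\bG}\tensorn B(\H^{\bH})$ factors through $\Linfty{\bG/\bH}\tensorn B(\H^{\bH})$, giving a representation $U_0$ of $\bG/\bH$ on $\H^{\bH}$.

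Now I would show $\liminf_n \|P\xi_n\|>0$ by contradiction: if along a subsequence $\|P\xi_n\|\to 0$, then $\eta_n:=(\one-P)\xi_n/\|(\one-P)\xi_n\|$ are well-defined unit vectors in $(\H^{\bH})^{\perp}$, and since $\one-P$ also commutes with $U$, they form a sequence of almost $\bG$-invariant vectors for the restriction of $U$ to $(\H^{\bH})^{\perp}$. Property (T) of the pair $(\bG,\bH)$ applied to this subrepresentation would produce a nonzero $\bH$-invariant vector in $(\H^{\bH})^{\perp}$, an absurdity. Hence $\|P\xi_n\|$ is bounded below, and $P\xi_n/\|P\xi_n\|$ is a sequence of almost $\bG$-invariant unit vectors in $\H^{\bH}$; these are almost invariant for the representation $U_0$ of $\bG/\bH$, and property (T) of $\bG/\bH$ produces a $U_0$-invariant vector, which is the desired $U$-invariant vector.

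\textbf{Main obstacle.} The routine part is the Hilbert-space juggling with $P$ and $\one-P$; the substantive content lies in verifying that for normal $\bH\trianglelefteq\bG$ the space $\H^{\bH}$ is $\bG$-stable and that the resulting subrepresentation genuinely factors through $\bG/\bH$. This is where \prettyref{prop:fact} does the heavy lifting, and I expect the bulk of the rigorous work to be in citing/arranging that proposition and in carefully identifying the $\bG/\bH$-representation $U_0$ so that invariance transfers back to invariance as a $\bG$-representation.
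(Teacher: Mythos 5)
Your proposal is correct and follows essentially the same route as the paper's proof: the forward direction pulls representations of $\bG/\bH$ back to $\bG$, and the converse projects onto $\Inv(U|_{\HH})$, uses property (T) of the pair $(\bG,\bH)$ to rule out the projections of the almost-invariant net onto $\Inv(U|_{\HH})^{\perp}$ surviving, and then invokes \prettyref{prop:fact} to view the subrepresentation as a $\bG/\bH$-representation with almost-invariant vectors. The only cosmetic differences are that the paper isolates the $\bG$-stability of $\Inv(U|_{\HH})$ as a separate result (\prettyref{prop:inv}) rather than folding it into \prettyref{prop:fact}, and that nets rather than sequences should be used throughout.
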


The implication `$\implies$' is not new, as will be explained below.
Let us recall the definition of normality of closed quantum subgroups. Let $\G$ be a LCQG and $\HH$ a closed quantum subgroup in the sense of Vaes with associated embedding $\gamma : \Linfty{\widehat{\HH}} \to \Linfty{\widehat{\G}}$.

\begin{defn}[{\cite[Definition 2.10]{Vaes_Vainerman__low_dim_LCQGs}}]
 We say that \emph{$\HH$ is normal in $\G$} if $W_{\widehat{\G}}(\gamma(\Linfty{\widehat{\HH}}) \tensor \one)W_{\widehat{\G}}^* \subseteq \gamma(\Linfty{\widehat{\HH}}) \tensorn B(\Ltwo{\G})$.
\end{defn}

The following characterizations of normality will be used tacitly in the sequel.

\begin{thm}[{\cite[\S 4]{Kasprzak_Soltan__QG_with_proj_and_ext_of_LCQG}, originally \cite[Theorem 2.11]{Vaes_Vainerman__low_dim_LCQGs}}]
	The following conditions are equivalent:
	\begin{enumerate}
		\item $\HH$ is normal in $\G$;
		\item $\li{\bG/\bH} = \li{\bH\backslash\bG}$;
		\item $\Delta_{\G}(\li{\bG/\bH}) \subseteq \li{\bG/\bH} \tensorn \li{\bG/\bH}$.
	\end{enumerate}
	When these are satisfied, $(\Linfty{\bG/\bH}, (\Delta_{\G})|_{\Linfty{\bG/\bH}})$ is a LCQG, which we denote by $\bG/\bH$.
\end{thm}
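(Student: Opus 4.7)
The plan is to reduce every assertion to a statement about the image $\gamma(\li{\widehat{\HH}})$ inside $\li{\widehat{\G}}$, following the structural approach of \cite{Vaes_Vainerman__low_dim_LCQGs,Kasprzak_Soltan__QG_with_proj_and_ext_of_LCQG}. The unifying principle is that all three conditions encode the same algebraic fact, namely that $\gamma(\li{\widehat{\HH}})$ is a two-sided coideal of $\li{\widehat{\G}}$, translated in three different ways via the $\G$/$\widehat{\G}$ duality.

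First I would translate (1) into such a coideal statement. Using $\Delta_{\widehat{\G}}(y) = W_{\widehat{\G}}^*(\one \tensor y) W_{\widehat{\G}}$ together with the convention $\widehat{W} = \sigma(W^*)$, condition (1) becomes
\[
\Delta_{\widehat{\G}}(\gamma(\li{\widehat{\HH}})) \subseteq \gamma(\li{\widehat{\HH}}) \tensorn \li{\widehat{\G}},
\]
i.e., $\gamma(\li{\widehat{\HH}})$ is a left coideal of $\li{\widehat{\G}}$. Since the Vaes-type embedding $\gamma$ intertwines the unitary antipodes of $\widehat{\HH}$ and $\widehat{\G}$, applying $R_{\widehat{\G}}$ promotes this one-sided coideal property to a two-sided one, which is the symmetry driving the equivalences below.

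For (1)$\Leftrightarrow$(2), the key preliminary is that the actions $\a_l, \a_r$ of $\HH$ on $\li{\G}$ are spatially implemented by unitaries built from $W_{\widehat{\G}}$ and $\gamma$; consequently $\li{\HH\backslash\G}$ and $\li{\G/\HH}$ admit descriptions as $\li{\G}$ intersected with the commutants of $\gamma(\li{\widehat{\HH}})$ and of $R_{\widehat{\G}}(\gamma(\li{\widehat{\HH}}))$, respectively, inside $B(\Ltwo{\G})$. Thus (2) is the assertion that these two commutants coincide, which combined with the one-sided coideal property above is equivalent to the two-sided coideal property, hence to (1). For (2)$\Leftrightarrow$(3), the paper already observes that $\li{\HH\backslash\G}$ is a right coideal, and symmetrically $\li{\G/\HH}$ is a left coideal; assuming (2), the common algebra $N := \li{\bG/\bH}$ is simultaneously a left and right coideal of $\li\G$, whence $\Delta_\G(N) \subseteq (N \tensorn \li{\G}) \cap (\li{\G} \tensorn N) = N \tensorn N$, giving (3). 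Conversely, (3) trivially contains both one-sided coideal inclusions, and feeding this back into the spatial description of $\a_l, \a_r$ forces the equality of the two fixed-point algebras.

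Finally, given (1)--(3), upgrading $(\li{\bG/\bH}, \Delta_\G|_{\li{\bG/\bH}})$ to a LCQG reduces to producing left- and right-invariant n.s.f.\ weights. The plan is to invoke the operator-valued weight machinery for Vaes-type closed subgroups: there is a canonical n.s.f.\ operator-valued weight from $\li{\G}^+$ into $\epp{\li{\bG/\bH}}$, and composing with the Haar weights of $\G$ while exploiting the $\Delta_\G$-invariance of $\li{\bG/\bH}$ on both sides produces the required invariant weights. I expect the main obstacle to lie precisely here: the equivalences (1)--(3) are in essence coideal/commutant bookkeeping once the flip/antipode conventions are fixed, but producing Haar weights for $\bG/\bH$ and verifying their invariance rests on the substantial operator-valued weight theory for quantum homogeneous spaces developed in the cited works.
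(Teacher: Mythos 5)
This statement is quoted by the paper from \cite{Kasprzak_Soltan__QG_with_proj_and_ext_of_LCQG} and \cite{Vaes_Vainerman__low_dim_LCQGs} without proof, so there is no in-paper argument to compare against; your proposal has to be judged against the structure of those cited proofs, and it contains a genuine error at its very first step. You translate condition (1) into the assertion that $\gamma(\li{\widehat{\HH}})$ is a left coideal of $\li{\widehat{\G}}$, i.e.\ $\Delta_{\widehat{\G}}(\gamma(\li{\widehat{\HH}})) \subseteq \gamma(\li{\widehat{\HH}}) \tensorn \li{\widehat{\G}}$. But that inclusion is automatic for \emph{every} closed quantum subgroup in the sense of Vaes: the defining property $(\gamma\tensor\gamma)\circ\Delta_{\widehat{\HH}} = \Delta_{\widehat{\G}}\circ\gamma$ already gives the much stronger $\Delta_{\widehat{\G}}(\gamma(\li{\widehat{\HH}})) \subseteq \gamma(\li{\widehat{\HH}})\tensorn\gamma(\li{\widehat{\HH}})$. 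The normality condition is \emph{not} a coideal condition in disguise: since $\Delta_{\widehat{\G}}(y) = W_{\widehat{\G}}^*(\one\tensor y)W_{\widehat{\G}}$, the expression $W_{\widehat{\G}}(y\tensor\one)W_{\widehat{\G}}^*$ appearing in the definition of normality has the element in the \emph{first} leg and the conjugation in the opposite order; it is an adjoint-type action, not the comultiplication, and its invariance is exactly the nontrivial extra hypothesis that distinguishes normal subgroups. Because your claimed reformulation of (1) holds vacuously for all Vaes subgroups, the chain of equivalences you build on it (in particular (1)$\Leftrightarrow$(2) via ``two-sided coideal $=$ equality of commutants'') does not get off the ground.

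Beyond that, two further steps are asserted rather than argued. First, the commutant descriptions $\li{\G/\HH} = \li{\G}\cap\gamma(\li{\widehat{\HH}})'$ and its counterpart for $\li{\HH\backslash\G}$ are true and standard, but they are themselves substantial results about Vaes-closed subgroups and you use them as if free; more importantly, even granting them, you never explain how the (correctly identified) normality condition interacts with these commutants. Second, in (3)$\Rightarrow$(2) you say that the inclusion $\Delta_{\G}(\li{\G/\HH})\subseteq\li{\G/\HH}\tensorn\li{\G/\HH}$ ``forces the equality of the two fixed-point algebras'' by ``feeding this back into the spatial description'' --- this is precisely the nontrivial content of the Kasprzak--So{\l}tan argument and cannot be waved through. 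The one piece that does work cleanly is (2)$\Rightarrow$(3): if $N:=\li{\G/\HH}=\li{\HH\backslash\G}$ then $N$ is simultaneously a left and a right coideal, and $(N\tensorn\li{\G})\cap(\li{\G}\tensorn N)=N\tensorn N$ by the tensor-product commutation theorem. The final construction of the Haar weights on the quotient is also only sketched, but there the appeal to the Baaj--Vaes/operator-valued-weight machinery of the cited literature is a legitimate reduction.
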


By the last sentence in the theorem's statement, the LCQG $\widehat{\bG/\bH}$ naturally becomes a closed quantum subgroup of $\widehat{\G}$ in the sense of Vaes.

\subsection{Invariance, almost invariance and preservation}
Let $U$ be a representation of a LCQG $\G$ on a Hilbert space $\H$ with associated morphism $\Phi \in \Mor(\CzU{\widehat \G}, K(\H))$. View $U$ as acting on $\Ltwo{\G} \tensor \H$ when appropriate.

\begin{defn}[{\cite[\S 3]{Daws_Fima_Skalski_White_Haagerup_LCQG}}] \mbox{}
	\begin{enumerate}
		\item A vector $\z\in\H$ \emph{is invariant under} $U$ if $\Phi(a)\z = \widehat{\epsilon}(a)\z$ for all $a \in \CzU{\widehat{\G}}$, or, equivalently, if $U(\eta \tensor \z) = \eta \tensor \z$ for all $\eta \in \Ltwo{\G}$. The closed subspace of all such vectors in $\H$ is denoted by $\Inv (U)$.
		\item We say that $U$ \emph{has almost-invariant vectors} if there exists a net $(\z_i)_{i \in \mathcal{I}}$ of unit vectors in $\H$ such that $\Phi(a)\z_i - \widehat{\epsilon}(a)\z_i \xrightarrow[i \in \mathcal{I}]{} 0$ for all $a \in \CzU{\widehat{\G}}$, or, equivalently, such that $U(\eta \tensor \z_i) - \eta \tensor \z_i \xrightarrow[i \in \mathcal{I}]{} 0$ for all $\eta \in \Ltwo{\G}$.
	\end{enumerate}	
\end{defn}

\begin{defn}
A closed subspace $\H_0$ of $\H$ \emph{is preserved by} (or is globally invariant under) $U$ if the projection $P$ of $\H$ onto $\H_0$ satisfies $(\one \tensor P)U(\one \tensor P) = U(\one \tensor P)$.
\end{defn}
Under the assumptions of the last definition, the operator $U(\one \tensor P) \in \Linfty{\G} \tensorn B(\H_0)$ is unitary, that is: $U$ and $\one \tensor P$ commute, by \cite[Corollary 4.16]{Brannan_Daws_Samei__cb_rep_of_conv_alg_of_LCQGs}, and is thus a representation of $\G$ on $\H_0$, indeed---a sub-representation of $U$.

\subsection{Restrictions of representations}

We first need some preliminaries on restricting representations to closed quantum subgroups. Throughout this subsection we let $\G$ be a LCQG and $\HH$ be a closed quantum subgroup in the sense of Woronowicz. Let $\pi \in \Mor (\CzU{\G}, \CzU{\HH})$ be the associated strong quantum homomorphism and $\widehat{\pi} \in \Mor(\CzU{\widehat{\HH}}, \CzU{\widehat{\G}})$ be its dual. We use the notation from \prettyref{sec:prelim}, and in particular the actions $\a_r,\a_l$. 

\begin{defn}\label{def:triv}Let $U$ be a representation of $\G$ on a Hilbert space $\H$. Write
  \begin{equation*}
    \Phi \in \Mor(\CzU{\widehat \G}, K(\H))
  \end{equation*}
  for the associated morphism. 
\begin{enumerate}
	\item The \emph{restriction} of $U$ to $\HH$ is the representation $U|_\HH$ of $\HH$ on $\H$ whose corresponding morphism is $\Phi \circ \widehat{\pi}$. Equivalently (by \eqref{eq:dual_str_quant_hom}), $$U|_\HH = (\i \tensor (\Phi \circ\widehat{\pi}))(\wW_\HH) = ((\Lambda_{\HH} \circ \pi) \tensor \Phi)(\WW_\G).$$
	\item The elements of $\Inv (U|_\HH)$ will be called the \emph{$\HH$-invariant vectors of $U$}.
	\item We say that {\it $U$ is trivial  on $\HH$} if $U|_\HH$ is the trivial representation of $\HH$ on $\H$, namely the unit of $\Linfty{\HH} \tensorn B(\H)$; equivalently: $\Phi \circ \widehat{\pi} = \epsilon_{\widehat{\bH}}(\cdot)\one$.
	\item Suppose that $\bH \le \bG$ is normal. We say that $U$ {\it factors through $\bG\to \bG/\bH$} if $U \in \Linfty{\G / \HH} \tensorn B(\H)$.
\end{enumerate}	
\end{defn}

\begin{rem}\label{rem:rstr_inv_vects}
A vector in $\H$ that is invariant under $U$ is also invariant under $U|_\HH$; and a similar statement about almost-invariant vectors also holds. This is because $\epsilon_{\widehat{\bG}} \circ \widehat{\pi} = \epsilon_{\widehat{\bH}}$ (see \eqref{eq:counit_str_quant_hom}).
\end{rem}

\begin{defn}
	The pair $(\G,\HH)$ is said to have \emph{property (T)} if for every representation of $\G$ with almost-invariant vectors, its restriction to $\HH$ has a non-zero invariant vector.
\end{defn}

Evidently, $\G$ itself has property (T) \cite[\S 6]{Daws_Fima_Skalski_White_Haagerup_LCQG} if and only if the pair $(\G,\G)$ has property (T); and in this case, the pair $(\G,\bK)$ has property (T) for every closed quantum subgroup $\bK$ of $\G$.

For the rest of this subsection we fix $U,\H,\Phi$ as in \prettyref{def:triv}.

\begin{lem}\label{lem:rstr}
We have 
\begin{gather}
\label{eq:rstr_1} (\alpha_l\otimes \mathrm{id})(U) = (U|_\HH)_{13}U_{23} \in \li{\bH} \tensorn \li{\bG} \tensorn B(\H), \\
\label{eq:rstr_2} (\alpha_r\otimes\mathrm{id})(U) = U_{13}(U|_\HH)_{23} \in \li{\bG} \tensorn \li{\bH} \tensorn B(\H).
\end{gather}
Therefore, the representation $U$ is trivial on $\HH$ if and only if $(\alpha_l\otimes \mathrm{id})(U) = U_{23}$, if and only if $(\alpha_r\otimes\mathrm{id})(U) = U_{13}$.
\end{lem}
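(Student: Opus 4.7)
The plan is to compute both sides of \eqref{eq:rstr_1} and \eqref{eq:rstr_2} directly, using the representation property $(\Delta^{\mathrm{u}}_\G \tensor \i)(\WW_\G) = (\WW_\G)_{13}(\WW_\G)_{23}$ of the universal multiplicative unitary together with the compatibility formulas \eqref{eq:left_quant_hom__comult} and \eqref{eq:right_quant_hom__comult} relating $\rho_l$, $\rho_r$ to the strong quantum homomorphism $\pi$. The key preliminary observation is that the bijection recalled in \prettyref{sec:prelim} gives $U = (\Lambda_\G \tensor \Phi)(\WW_\G)$, while the second formula in \prettyref{def:triv}(1) states that $U|_\HH = ((\Lambda_\HH \circ \pi) \tensor \Phi)(\WW_\G)$.

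For \eqref{eq:rstr_1}, I would start from $(\alpha_l \tensor \i)(U) = (\rho_l \circ \Lambda_\G \tensor \Phi)(\WW_\G)$ — noting that $\alpha_l$ extends $\rho_l$ and that $U \in \M{\Cz{\G} \tensormin K(\H)}$ — and then substitute \eqref{eq:left_quant_hom__comult} to rewrite the right-hand side as
\[
\bigl((\Lambda_\HH \circ \pi) \tensor \Lambda_\G \tensor \Phi\bigr)\bigl((\Delta^{\mathrm{u}}_\G \tensor \i)(\WW_\G)\bigr) = \bigl((\Lambda_\HH \circ \pi) \tensor \Lambda_\G \tensor \Phi\bigr)\bigl((\WW_\G)_{13} (\WW_\G)_{23}\bigr).
\]
One then reads off each factor separately: the image of $(\WW_\G)_{13}$ is $((\Lambda_\HH \circ \pi) \tensor \Phi)(\WW_\G)$ placed in legs $1,3$, which is precisely $(U|_\HH)_{13}$, while the image of $(\WW_\G)_{23}$ is $(\Lambda_\G \tensor \Phi)(\WW_\G)$ placed in legs $2,3$, which is $U_{23}$. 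A verbatim argument using \eqref{eq:right_quant_hom__comult} in place of \eqref{eq:left_quant_hom__comult} proves \eqref{eq:rstr_2}.

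The concluding equivalences follow immediately: if $U$ is trivial on $\HH$ then $U|_\HH = \one$, so $(U|_\HH)_{13} = \one$ in $\li{\bH} \tensorn \li{\bG} \tensorn B(\H)$ and \eqref{eq:rstr_1} collapses to $(\alpha_l \tensor \i)(U) = U_{23}$; conversely, if $(\alpha_l \tensor \i)(U) = U_{23}$, then \eqref{eq:rstr_1} gives $(U|_\HH)_{13} = \one$ after right-multiplying by the unitary $U_{23}^*$, which forces $U|_\HH = \one$. The $\alpha_r$ case is identical. I do not anticipate any serious obstacle here: the proof is essentially leg-numbering bookkeeping, and the only point requiring a moment of care is confirming that the von Neumann algebraic extensions $\alpha_l, \alpha_r$ act on $U$ exactly as $\rho_l, \rho_r$ do on its C$^*$-level lift, which is legitimate precisely because $U \in \M{\Cz{\G} \tensormin K(\H)}$.
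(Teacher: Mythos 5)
Your proposal is correct and follows essentially the same route as the paper: both arguments write $U=(\Lambda_\G\tensor\Phi)(\WW_\G)$ and $U|_\HH=((\Lambda_\HH\circ\pi)\tensor\Phi)(\WW_\G)$, apply \eqref{eq:left_quant_hom__comult} (resp.\ \eqref{eq:right_quant_hom__comult}) to reduce to $(\Delta^{\mathrm{u}}_\G\tensor\i)(\WW_\G)=(\WW_\G)_{13}(\WW_\G)_{23}$, and read off the legs. Your spelled-out deduction of the final equivalences (multiplying by the unitary $U_{23}^*$) is exactly what the paper leaves as ``readily follows.''
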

\begin{proof}
  The left hand side of \eqref{eq:rstr_1} equals $((\a_l \circ \Lambda_{\G}) \tensor \Phi)(\WW_\G)$, so by \eqref{eq:left_quant_hom__comult} it is obtained by applying $(\Lambda_{\bH}\circ\pi) \otimes \Lambda_{\G} \otimes \Phi$ to $(\Delta^{\mathrm{u}}_\G\otimes \mathrm{id})(\WW^{\bG})$. Since the latter is $\WW^{\bG}_{13}\WW^{\bG}_{23}$, we obtain equation \eqref{eq:rstr_1}. Equation \eqref{eq:rstr_2} is proved similarly using \eqref{eq:right_quant_hom__comult}.
  The second statement readily follows. 
\end{proof}


\begin{prop}\label{prop:fact}
  Suppose that $\bH$ is normal in $\bG$, and write $\widetilde{\pi} \in \Mor(\CzU{\widehat{\G}}, \CzU{\widehat{\bG/\bH}})$ for the strong quantum homomorphism associated to $\widehat{\bG/\bH}$ being a closed quantum subgroup of $\widehat{\G}$. Then the following conditions are equivalent:
  \begin{enumerate}
  \item \label{enu:fact__1} $U$ is trivial on $\bH$;
  \item \label{enu:fact__2}$U$ factors through $\bG/\bH$;
  \item \label{enu:fact__3}the representation $\Phi$ factors through $\widetilde{\pi}$.
  \end{enumerate}
\end{prop}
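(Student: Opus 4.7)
The plan is to prove the cycle $(1) \Leftrightarrow (2) \Leftrightarrow (3)$. The equivalence $(1) \Leftrightarrow (2)$ is essentially an unpacking of \prettyref{lem:rstr} combined with normality of $\bH$: by that lemma, $U$ is trivial on $\bH$ iff $(\a_l \tensor \i)(U) = \one \tensor U$, i.e., $U$ is a fixed point of the action $\a_l \tensor \i$ on $\li{\bG} \tensorn B(\H)$. The fixed-point subalgebra of such a one-sided action is $\li{\bG}^{\a_l} \tensorn B(\H) = \li{\bH \backslash \bG} \tensorn B(\H)$, which by normality of $\bH$ coincides with $\li{\bG/\bH} \tensorn B(\H)$. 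Thus $U \in \li{\bG/\bH} \tensorn B(\H)$, which is precisely condition (2).

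For $(2) \Leftrightarrow (3)$, the key is to establish the identity
\[
(\i \tensor \widetilde{\pi})(\wW_\bG) = \wW_{\bG/\bH},
\]
where the right-hand side is regarded as an element of $\M{\Cz{\bG} \tensormin \CzU{\widehat{\bG/\bH}}}$ via the Vaes inclusion $\iota : \Cz{\bG/\bH} \hookrightarrow \Cz{\bG}$ corresponding to $\widehat{\bG/\bH} \le \widehat{\bG}$. To derive this, I would apply \eqref{eq:dual_str_quant_hom} to the dual strong quantum homomorphism $\widehat{\widetilde{\pi}} \in \Mor(\CzU{\bG/\bH}, \CzU{\bG})$ to get $(\widehat{\widetilde{\pi}} \tensor \i)(\WW_{\bG/\bH}) = (\i \tensor \widetilde{\pi})(\WW_\bG)$, push through $\Lambda_\bG \tensor \i$ to convert the first legs to $\Cz{\cdot}$, and then use \eqref{eq:Woronowicz_Vaes} (which gives $\Lambda_\bG \circ \widehat{\widetilde{\pi}} = \iota \circ \Lambda_{\bG/\bH}$) to rewrite the left side as $(\iota \tensor \i)(\wW_{\bG/\bH})$.

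Granting this identity, $(3) \Rightarrow (2)$ is immediate: if $\Phi = \Phi' \circ \widetilde{\pi}$, then $U = (\i \tensor \Phi)(\wW_\bG) = (\i \tensor \Phi')\bigl((\i \tensor \widetilde{\pi})(\wW_\bG)\bigr) = (\i \tensor \Phi')(\wW_{\bG/\bH}) \in \li{\bG/\bH} \tensorn B(\H)$. Conversely, if $U \in \li{\bG/\bH} \tensorn B(\H)$, the representation relation for $\bG$ restricts to the representation relation for $\bG/\bH$ since $\Delta_{\bG/\bH}$ is the restriction of $\Delta_\bG$, so $U$ is a representation of $\bG/\bH$ and hence corresponds to some morphism $\Phi' \in \Mor(\CzU{\widehat{\bG/\bH}}, K(\H))$ with $(\i \tensor \Phi')(\wW_{\bG/\bH}) = U$. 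Injectivity of the correspondence between representations of $\bG$ and morphisms on $\CzU{\widehat{\bG}}$ then forces $\Phi = \Phi' \circ \widetilde{\pi}$.

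The main obstacle is the careful verification of the $\wW$-identity together with the leg-numbering bookkeeping needed to move among the universal $\WW$, the semi-universal $\wW$ and $\Ww$, and their reduced versions; everything else is a routine application of the universality of the bijection between representations and morphisms on $\CzU{\widehat{\,\cdot\,}}$.
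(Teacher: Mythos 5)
Your proposal is correct and follows essentially the same route as the paper: (1)$\iff$(2) via \prettyref{lem:rstr} together with normality giving $\li{\bH\backslash\bG}=\li{\bG/\bH}$, and (2)$\iff$(3) via \eqref{eq:dual_str_quant_hom} and \eqref{eq:Woronowicz_Vaes} applied to $\widehat{\bG/\bH}\le\widehat{\bG}$ (the paper runs the computation directly on $\WW_\G$ with $\Lambda_\G\tensor\Phi'$ rather than isolating your semi-universal identity $(\i\tensor\widetilde{\pi})(\wW_\bG)=\wW_{\bG/\bH}$, but this is the same calculation), with the converse direction likewise settled by uniqueness of the morphism associated to a representation.
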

\begin{proof}
  The equivalence \prettyref{enu:fact__1}$\iff$\prettyref{enu:fact__2} is clear from the second assertion in \prettyref{lem:rstr} and $\li{\bG/\bH}=\li{\bH \backslash \bG}$ being the fixed-point algebra of $\a_l$.

  For the equivalence with \prettyref{enu:fact__3} we need the following observation: applying \eqref{eq:Woronowicz_Vaes} to $\widehat{\bG/\bH} \le \widehat{\G}$, in which case $\gamma$ is just the inclusion map $j : \Linfty{\bG/\bH} \hookrightarrow \Linfty{\G}$, gives that $j \circ \Lambda_{\bG/\bH} = \Lambda_{\G} \circ \widehat{\widetilde{\pi}}$.

  \prettyref{enu:fact__3}$\implies$\prettyref{enu:fact__2}: let $\Phi' \in \Mor(\CzU{\widehat{\bG/\bH}}, K(\H))$ be such that $\Phi = \Phi' \circ \widetilde{\pi}$. Then using \eqref{eq:dual_str_quant_hom},
\[
\begin{split}
U & = (\Lambda_\G \tensor \Phi)(\WW_\G) = (\Lambda_\G \tensor (\Phi' \circ \widetilde{\pi}))(\WW_\G) 
= ((\Lambda_\G \circ \widehat{\widetilde{\pi}})\tensor \Phi')(\WW_{\G / \HH}) \\
& = ((j \circ \Lambda_{\bG/\bH})\tensor \Phi')(\WW_{\G / \HH}) \in \Linfty{\G / \HH} \tensorn B(\H),
\end{split}
\]
proving that $U$ factors through $\bG/\bH$.

\prettyref{enu:fact__2}$\implies$\prettyref{enu:fact__3}: assume that $U$ factors through $\bG/\bH$; in other words, it can be seen as a representation of $\bG/\bH$. Let thus $\Phi' \in \Mor(\CzU{\widehat{\bG/\bH}}, K(\H))$ be the associated representation of $\CzU{\widehat{\bG/\bH}}$. Then repeating the above computation yields that $U = (\Lambda_\G \tensor (\Phi' \circ \widetilde{\pi}))(\WW_\G)$. The uniqueness of $\Phi$ hence implies that it equals $\Phi' \circ \widetilde{\pi}$.
\end{proof}

Next, we consider the (global) invariance of the space of $\bH$-invariant vectors under all of $\bG$ when the former is normal in the latter. 

\begin{prop}\label{prop:inv}
  Suppose that $\bH$ is normal in $\bG$. Then the closed subspace $\Inv (U|_\HH)$ of $\H$ consisting of all vectors invariant under $U|_\HH$ is preserved by $U$.
\end{prop}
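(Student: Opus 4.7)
The plan is to show that $P \in B(\H)$, the orthogonal projection onto $\H_0 := \Inv(U|_\bH)$, satisfies $(\one\otimes P)\,U\,(\one\otimes P) = U(\one\otimes P)$; equivalently, that $B := U(\one\otimes P)$ sends $\Ltwo{\bG}\otimes\H$ into $\Ltwo{\bG}\otimes\H_0$. The starting observation is that the invariance of every vector of $\H_0$ under $U|_\bH$ is precisely $(U|_\bH)(\one\otimes P) = \one\otimes P$, which in particular gives $(U|_\bH)_{23}(\one\otimes\one\otimes P) = \one\otimes\one\otimes P$.

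I would then multiply the two identities of \prettyref{lem:rstr} on the right by $\one\otimes\one\otimes P$, and use that $\alpha_l\otimes\i$ and $\alpha_r\otimes\i$ are unital $*$-homomorphisms, to extract
\begin{equation*}
(\alpha_r\otimes\i)(B) = B_{13} \qquad\text{and}\qquad (\alpha_l\otimes\i)(B) = (U|_\bH)_{13}\,B_{23}.
\end{equation*}
The first formula says exactly that $B \in \Linfty{\bG/\bH}\tensorn B(\H)$.

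This is where the normality hypothesis enters: the equality $\Linfty{\bG/\bH}=\Linfty{\bH\backslash\bG}$ recorded in the theorem above upgrades the $\alpha_r$-invariance of $B$ to $\alpha_l$-invariance, i.e.~$(\alpha_l\otimes\i)(B) = \one\otimes B = B_{23}$. Comparing with the second formula yields the key identity $(U|_\bH)_{13}\,B_{23} = B_{23}$.

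To finish, I would slice this identity by $\omega\otimes\i\otimes\i$ for arbitrary $\omega\in\Lone{\bH}$, obtaining the operator equation $\bigl(\one\otimes\bigl((\omega\otimes\i)(U|_\bH) - \omega(\one)\one\bigr)\bigr)\,B = 0$ on $\Ltwo{\bG}\otimes\H$. Intersecting the kernels over all such $\omega$ recovers $\H_0$ by the very definition of invariant vector, so the range of $B$ lies in $\Ltwo{\bG}\otimes\H_0$, which was the goal. The crux, and the only place normality is used, is the upgrade of $\alpha_r$-fixedness of $B$ to $\alpha_l$-fixedness; everything else is bookkeeping with the leg-numbering formulas of \prettyref{lem:rstr}.
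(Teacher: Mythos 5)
Your proposal is correct and follows essentially the same route as the paper's proof: both use the two leg-numbering identities of \prettyref{lem:rstr} to show that $U(\one\otimes P)$ is fixed by $\alpha_r\otimes\i$, invoke normality via $\Linfty{\bG/\bH}=\Linfty{\bH\backslash\bG}$ to upgrade this to $\alpha_l$-fixedness, and compare with \eqref{eq:rstr_1} to get $(U|_\HH)_{13}\,(U(\one\otimes P))_{23}=(U(\one\otimes P))_{23}$. The only (immaterial) difference is in the final bookkeeping: you slice the $\Linfty{\HH}$ leg and intersect kernels, whereas the paper slices the $\Linfty{\G}$ leg and applies the defining maximality property of $P$ to the resulting operators $(\om\tensor\i)(U(\one\otimes P))$.
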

\begin{proof}
  Let $P$ be the projection onto $\Inv (U|_\HH)$; its defining property is that it is the largest projection in $B(\H)$ such that
  \begin{equation*}
    U|_\HH(\one\otimes P)=\one\otimes P
  \end{equation*}
  (see \cite[Proposition 3.4]{Daws_Fima_Skalski_White_Haagerup_LCQG}). Our goal is to argue that $(\one \otimes P)U(\one \otimes P) = U(\one \otimes P)$.

  
  By \prettyref{lem:rstr} equation \eqref{eq:rstr_2} we have
  \begin{equation*}
    (\alpha_r\otimes\mathrm{id})(U(\one\otimes P)) = U_{13}(U|_\HH)_{23}(\one\otimes \one\otimes P) = U_{13}(\one\otimes \one\otimes P). 
  \end{equation*}
  This means that $U(\one\otimes P) \in \li{\bG/\bH} \tensorn B(H)$. Normality of $\bH$ in $\bG$ is equivalent to  $\li{\bG/\bH}=\li{\bH\backslash\bG}$. As a result, $(\alpha_l\otimes \mathrm{id})(U(\one\otimes P)) = U_{23}(\one\otimes \one\otimes P)$. In combination with \prettyref{lem:rstr} equation \eqref{eq:rstr_1}, we obtain
\begin{equation*}
  (U|_\HH)_{13}U_{23}(\one\otimes \one\otimes P) = U_{23}(\one\otimes \one\otimes P).
\end{equation*}
Equivalently, writing $U_0 := U(\one \otimes P)$, for all $\om \in \Lone{\G}$ we have
$$
U|_\HH \big( \one \tensor (\om \tensor \i)(U_0) \big)  = \one \tensor (\om \tensor \i)(U_0),
$$
hence $P (\om \tensor \i)(U_0) = (\om \tensor \i)(U_0)$ by the definition of $P$. That is, $(\one \otimes P)U(\one \otimes P) = U(\one \otimes P)$, as desired.
%
%
\end{proof}

We end this subsection with the following technical lemma, needed later.

\begin{lem}\label{lem:hom_sp_inv_norm_state_inv_vects} Suppose that the left action of $\bG$ on $\Linfty{\G/\HH}$ has an invariant normal state $\omega\in \li{\bG/\bH}_*$ (see \prettyref{def:invariant_weight} below), and extend it to a normal state  of $\li{\bG}$ denoted by the same symbol. Let $U$ be a representation of $\G$. Then for every $\xi\in\Inv(U|_{\HH})$ we have $(\om\tensor\i)(U)\xi\in\Inv(U)$.
\end{lem}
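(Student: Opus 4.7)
The plan is to prove something slightly stronger: for $P\in B(\H)$ the projection onto $\Inv(U|_\bH)$, the operator $L_\omega P$, where $L_\omega := (\omega \tensor \i)(U) \in B(\H)$, has its range contained in $\Inv(U)$. The lemma then follows at once by specialising to $\xi\in\Inv(U|_\bH)$, for which $P\xi=\xi$ and hence $L_\omega P\xi = L_\omega \xi = (\omega\tensor\i)(U)\xi$.

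The key intermediate step is to show that the truncated operator $V := U(\one\tensor P)\in\li{\bG}\tensorn B(\H)$ actually lies in $\li{\bG/\bH}\tensorn B(\H)$, i.e.\ is fixed by $\alpha_r\tensor\i$. For this I would combine the identity $(\alpha_r\tensor\i)(U) = U_{13}(U|_\bH)_{23}$ from \prettyref{lem:rstr} with the defining property $U|_\bH(\one\tensor P) = \one\tensor P$ of the invariant projection \cite[Proposition 3.4]{Daws_Fima_Skalski_White_Haagerup_LCQG}. This yields
\[
  (\alpha_r\tensor\i)(V) = U_{13}(U|_\bH)_{23}(\one\tensor\one\tensor P) = U_{13}(\one\tensor\one\tensor P) = V_{13},
\]
which is precisely the characterisation of $V$ being a member of the coideal $\li{\bG/\bH}\tensorn B(\H)$.

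With $V$ so located, I would apply the slice map $(\i\tensor\omega)\Delta\tensor\i$ to $V$ and compute the outcome in two ways. Slicing the middle leg of the representation identity $(\Delta\tensor\i)(U)=U_{13}U_{23}$ against $\omega$ gives $((\i\tensor\omega)\Delta\tensor\i)(U) = U(\one\tensor L_\omega)$; multiplying by $\one\tensor P$ on the right produces $((\i\tensor\omega)\Delta\tensor\i)(V) = U(\one\tensor L_\omega P)$. On the other hand, since $\omega$ is invariant under the left action of $\bG$ on $\li{\bG/\bH}$, the map $(\i\tensor\omega)\Delta$ coincides with $\omega(\cdot)\one$ on $\li{\bG/\bH}$, so the same slice map evaluates $V$ as $\one\tensor(\omega\tensor\i)(V) = \one\tensor L_\omega P$. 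Equating the two expressions gives the operator identity $U(\one\tensor L_\omega P) = \one\tensor L_\omega P$ in $\li{\bG}\tensorn B(\H)$, and reading this off on vectors of the form $\eta\tensor\zeta$ delivers $L_\omega P\zeta \in \Inv(U)$ for every $\zeta\in\H$.

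The one delicate manoeuvre is the first step, where the $\bH$-invariance of $\xi$, encoded through $P$, is used to push $V$ inside the coideal $\li{\bG/\bH}\tensorn B(\H)$; absent this, the invariance hypothesis on $\omega$ simply does not apply. Everything after that is routine bookkeeping with the formulas of \prettyref{lem:rstr} and the pentagonal identity $(\Delta\tensor\i)(U)=U_{13}U_{23}$.
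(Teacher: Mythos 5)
Your argument is correct and is essentially the paper's own proof: both hinge on using \prettyref{lem:rstr} (equation \eqref{eq:rstr_2}) together with the $\bH$-invariance to place the relevant object inside $\li{\bG/\bH}$ (tensored with $B(\H)$), and then on slicing the identity $(\Delta_\G\tensor\i)(U)=U_{13}U_{23}$ against the invariant state $\om$. The only difference is presentational --- you work at the operator level with $V=U(\one\tensor P)$ (echoing the computation in \prettyref{prop:inv}), while the paper fixes $\xi\in\Inv(U|_\HH)$ and argues via the matrix coefficients $(\i\tensor\om_{\xi,\eta})(U)\in\li{\bG/\bH}$; the two formulations are interchangeable by slicing the third leg with vector functionals.
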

\begin{proof}
  From \prettyref{lem:rstr} and the assumption that $\xi\in\Inv(U|_{\HH})$
  we have
  \[
    ((\a_{r}\tensor\i)(U))(\Xi\tensor\xi)=U_{13}(U|_{\HH})_{23}(\Xi\tensor\xi)=U_{13}(\Xi\tensor\xi)\qquad(\forall\Xi\in\Ltwo{\G}\tensor\Ltwo{\HH}).
  \]
  Consequently, for each $\eta\in\H$ we have $\a_{r}\left((\i\tensor\om_{\xi,\eta})(U)\right)=(\i\tensor\om_{\xi,\eta})(U)\tensor\one$,
  i.e., $(\i\tensor\om_{\xi,\eta})(U)\in\Linfty{\G/\HH}$. 
  
  Since $U$ is a representation of $\G$, for all $\a,\be\in\Ltwo{\G}$
  and $\eta\in\H$ we have 
  \[
    \begin{split}\left\langle U\left[\a\tensor(\om\tensor\i)(U)\xi\right],\be\tensor\eta\right\rangle  & =\left\langle \left((\i\tensor\om\tensor\i)(U_{13}U_{23})\right)(\a\tensor\xi),\be\tensor\eta\right\rangle \\
      & =\left\langle \left((\i\tensor\om\tensor\i)((\Delta_\G\tensor\i)(U))\right)(\a\tensor\xi),\be\tensor\eta\right\rangle \\
      & =\left\langle ((\i\tensor\om)\circ\Delta_{\G})((\i\tensor\om_{\xi,\eta})(U))\a,\be\right\rangle .
    \end{split}
  \]
  Recall that $\Delta_{\G/\HH}:\Linfty{\G/\HH}\to\Linfty{\G}\tensorn\Linfty{\G/\HH}$
  is the restriction of $\Delta_{\G}$ to $\Linfty{\G/\HH}$, and that
  the invariance of $\om$ means that $(\i\tensor\om)\circ\Delta_{\G/\HH}=\om(\cdot)\one$.
  Hence, the above equals 
  \[
    \begin{split}\left\langle ((\i\tensor\om)\circ\Delta_{\G/\HH})((\i\tensor\om_{\xi,\eta})(U))\a,\be\right\rangle  & =\left\langle \om((\i\tensor\om_{\xi,\eta})(U))\a,\be\right\rangle \\
      & =\left\langle \a\tensor(\om\tensor\i)(U)\xi,\be\tensor\eta\right\rangle ,
    \end{split}
  \]
  proving that $(\om\tensor\i)(U)\xi\in\Inv(U)$.
\end{proof}

\subsection{Back to property (T)}

With the above material in place we can now mimic the proof of \cite[Proposition 4.13]{Bhattacharya_Brannan_Chirvasitu_Wang__rig_soft_prop_resid_fin_DQG}.

\begin{proof}[Proof of \prettyref{thm:ext}]
  ($\implies$): if $\G$ has property (T), then so does the pair $(\G,\HH)$; and furthermore, since $\widehat{\bG/\bH}$ is a closed quantum subgroup of $\widehat{\G}$ in the sense of (Vaes, thus) Woronowicz, \cite[Corollary 3.7]{Chen_Ng__prop_T_LCQGs} implies that $\bG/\bH$ has property (T) (use \eqref{eq:counit_str_quant_hom}). Remark that this is a particular case of \cite[Theorem 5.7]{Daws_Skalski_Viselter__prop_T}.
  
  ($\impliedby$): let $U$ be a representation of $\G$ on a Hilbert space $\H$ that has almost-invariant vectors. 
  \prettyref{prop:inv} then shows that $\Inv (U|_\HH)$ is preserved by $U$. Denoting the associated sub-representation of $U$ by $U_0$, we claim that it has almost-invariant vectors.

  To see this, note that any net $(\xi_{i})_{i \in \mathcal{I}}$ witnessing almost-invariant vectors of $U$ whose projections $(\xi^{\perp}_{i})_{i \in \mathcal{I}}$ on the orthogonal complement $\Inv (U|_\HH)^{\perp}$ fails to converge to zero would give rise to almost-invariant, and hence---because the pair $(\bG,\bH)$ has property (T)---non-zero $\bH$-invariant, vectors of $U$ in $\Inv (U|_\HH)^{\perp}$. This would then contradict the fact that $\Inv (U|_\HH)$ contains {\it all} such vectors. 

  \prettyref{prop:fact} shows that the representation $U_0$ of $\bG$ factors through $\bG/\bH$, and since the latter has property (T) the existence of almost-invariant vectors for $U_0$ entails the existence of a non-zero invariant vector for $U_0$ as a representation of $\bG/\bH$, thus also as a representation of $\bG$, concluding the proof.
\end{proof}

\section{Invariant weights}\label{sec:invariant-weights}

Classically, if $G$ is a locally compact group and $H$ is a closed subgroup of $G$, then the action $G\curvearrowright G/H$ admits a (strongly) \emph{quasi-invariant} (Radon) measure \cite[Proposition 2.54 and Theorem 2.56]{Folland__abs_harmon_anal}, but not always an \emph{invariant} measure. In fact, quasi-invariant measures on $G/H$ correspond to certain measures on $G$, namely the ones that are equivalent to the left/right Haar measure with the Radon--Nikodym derivative satisfying certain conditions \cite[Chapter VII, \S 2, Lemma 4, a$\iff$c, and Lemma 5]{Bourbaki__Integration_2_eng}. The existence of an invariant measure on $G/H$ is equivalent to the modular element of $G$ restricting to that of $H$ \cite[Theorem 2.49]{Folland__abs_harmon_anal}. In this section we prove that this holds for LCQGs.

Assume that $\HH$ is a closed quantum subgroup of a LCQG $\G$ in the sense of Woronowicz. As Kustermans remarks in \cite[p.~417]{Kustermans__induced_corep_LCQG}, every n.s.f.~weight on $\Linfty{\G/\HH}$ should be seen as playing the role of a quasi-invariant n.s.f.~weight (``measure''), because all n.s.f.~weights on $\Linfty{\G}$ are ``equivalent'' to one another---this is the essence of Connes' cocycle Radon--Nikodym derivative---and in particular to the Haar weights on $\G$ (compare \cite[Chapter VII, \S 2, Lemma 4, a$\iff$c]{Bourbaki__Integration_2_eng} again). To formalize the criterion for the existence of an invariant n.s.f.~weight for the action $\G\curvearrowright\Linfty{\G/\HH}$, recall that the Radon--Nikodym derivative \cite{Vaes__Radon_Nikodym} of the right Haar weight with respect to the left Haar weight of $\G$ is a generally unbounded, positive, self-adjoint operator $\delta_{\G}$ affiliated with $\li{\G}$, called the modular element. It has a universal version $\delta^{\mathrm{u}}_\G$ affiliated with the C$^*$-algebra $\CzU \G$.

\begin{defn}\label{def:invariant_weight}
  Let $\G$ be a LCQG, $\a$ be a left action of $\G$ on a von Neumann algebra $M$ and $\theta$ be a normal semi-finite weight on $M$. Consider the maps $(\i\tensor\theta)\circ\a$ and $\theta(\cdot)\one$, both from $M_{+}$ to $\epp{\Linfty{\G}}$. We say that $\theta$ is \emph{completely invariant} under $\a$ if these maps coincide. We say that $\theta$ is\emph{ invariant} under $\a$ if they coincide on $\mathcal{M}_{\theta}^{+}$; equivalently, if for every $x\in\mathcal{M}_{\theta}^{+}$ and $\omega\in\Lone{\G}_{+}$ we have $\theta\left((\omega\tensor\i)(\a(x))\right)=\theta(x)\omega(\one)$ (in particular, $(\omega\tensor\i)(\a(x)) \in \mathcal{M}_{\theta}^{+}$). These invariance notions are defined similarly for right actions.
\end{defn}

\begin{rem}
  \label{rem:inv}
  The definition of invariance (of unbounded weights) is a particular case of \citep[Definition 2.3]{Vaes__unit_impl_LCQG}.  Complete invariance evidently implies invariance. In certain cases these notions coincide, e.g.~for the Haar weights of a LCQG, see \citep[Proposition 3.1]{Kustermans_Vaes__LCQG_von_Neumann}.
\end{rem}

\begin{defn}
Let $\HH$ be a closed quantum subgroup of a LCQG $\G$ in the sense
of Woronowicz, and write $\pi:\CzU{\G}\to\CzU{\HH}$ for the corresponding
strong quantum homomorphism. We say that $\delta_{\G}$ \emph{restricts} \emph{to} $\delta_{\HH}$
if $\pi((\delta_{\G}^{\mathrm{u}})^{it})=(\delta_{\HH}^{\mathrm{u}})^{it}$
for all $t\in\R$.
\end{defn}

The next result, which is the main one of this section, extends the above-mentioned classical result, as well as  \citep[Proposition 5.1]{Kalantar_Kasprzak_Skalski__open_quant_subgr} and \citep[Lemma 3.1]{Kalantar_Kasprzak_Skalski_Soltan__ind_LCQG_revisited}.

\begin{thm}
  \label{thm:quot_inv_nsf_weight}
  Let $\HH$ be a closed quantum subgroup of a LCQG $\G$ in the sense of Vaes and
  \begin{equation}\label{eq:ar-action}
    \alpha_r:\li{\bG}\to \li{\bG}\tensorn \li{\bH} 
  \end{equation}
  the resulting right action of $\HH$ on $\Linfty{\G}$. Then the following conditions are equivalent:
  \begin{enumerate}
  \item\label{enu:inv1} the left action of $\G$ on $\Linfty{\G/\HH}$ has a completely invariant n.s.f.~weight;
  \item\label{enu:inv2} $\a_r(\delta_{\G}^{it})=\delta_{\G}^{it}\tensor\delta_{\HH}^{it}$ for all $t\in\R$;
  \item\label{enu:inv3} $\delta_{\G}$ restricts to $\delta_{\HH}$.   
  \end{enumerate}
  In that case, the invariant n.s.f.~weight is unique up to scaling. 
\end{thm}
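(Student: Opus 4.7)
The plan is to first establish the equivalence (2)$\iff$(3) by a direct universal-level computation, then prove (2)$\Rightarrow$(1) and (1)$\Rightarrow$(2) using operator-valued weight theory; uniqueness will fall out along the way.

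For (2)$\iff$(3), the key ingredient is that the universal modular element is a $\Delta^{\mathrm{u}}_\G$-character: $\Delta^{\mathrm{u}}_\G((\delta^{\mathrm{u}}_\G)^{it}) = (\delta^{\mathrm{u}}_\G)^{it} \otimes (\delta^{\mathrm{u}}_\G)^{it}$ (Radon--Nikodym theory, \cite{Vaes__Radon_Nikodym}). Applying $\Lambda_\G \otimes (\Lambda_\HH \circ \pi)$ to both sides and invoking \eqref{eq:right_quant_hom__comult} yields
\[
\alpha_r(\delta_\G^{it}) \;=\; \delta_\G^{it} \otimes \Lambda_\HH\bigl(\pi((\delta^{\mathrm{u}}_\G)^{it})\bigr).
\]
Then (3)$\Rightarrow$(2) is immediate, while (2)$\Rightarrow$(3) rests on the characterization of $(\delta^{\mathrm{u}}_\HH)^{it}$ as the unique strongly continuous one-parameter group of unitary $\Delta^{\mathrm{u}}_\HH$-characters affiliated with $\CzU{\HH}$ reducing to $\delta_\HH^{it}$; the group $\pi((\delta^{\mathrm{u}}_\G)^{it})$ has all three properties by (2).

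For (2)$\Rightarrow$(1), introduce the operator-valued weight
\[
T_R(x) \;:=\; (\i \otimes \varphi_\HH)(\alpha_r(x)) \qquad \bigl(x \in \Linfty{\G}_+\bigr),
\]
which lands in $\epp{\Linfty{\G/\HH}}$ by the right-action axiom and left-invariance of $\varphi_\HH$. Using (2), one verifies that the modular automorphism group $\sigma_t^{\varphi_\G}$ preserves the coideal $\Linfty{\G/\HH}$, combining the Kustermans--Vaes identity $\sigma_t^{\varphi_\G}(y) = \delta_\G^{it}\tau_t(y)\delta_\G^{-it}$ with the $\alpha_r$-covariance of the scaling group $\tau_t$. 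Haagerup's descent theorem for n.s.f.~operator-valued weights then produces a unique-up-to-scalar n.s.f.~weight $\mu$ on $\Linfty{\G/\HH}$ satisfying $\varphi_\G = \mu \circ T_R$. The bi-action commutation $(\Delta_\G \otimes \i) \rho_r = (\i \otimes \rho_r) \Delta_\G$ yields the intertwining $(\i \otimes T_R) \circ \Delta_\G = \Delta_\G \circ T_R$, which transfers complete left-invariance of $\varphi_\G$ to that of $\mu$.

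Conversely, for (1)$\Rightarrow$(2) and uniqueness, given a completely invariant n.s.f.~$\mu$, the composite $\mu \circ T_R$ is n.s.f.~and left-invariant on $\Linfty{\G}$ via the same intertwining, hence a positive scalar multiple of $\varphi_\G$. Reading off $\sigma_t^{\varphi_\G}|_{\Linfty{\G/\HH}} = \sigma_t^\mu$ from this identification and tracing through the formula for $\sigma_t^{\varphi_\G}$ and the $\alpha_r$-covariance of $\tau_t$ recovers $\alpha_r(\delta_\G^{it}) = \delta_\G^{it} \otimes \delta_\HH^{it}$, i.e.~(2); uniqueness of $\mu$ up to scalar is then immediate from the uniqueness of the left Haar weight of $\G$. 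The hardest step will be verifying that $\sigma_t^{\varphi_\G}$ preserves $\Linfty{\G/\HH}$ under (2)---the compatibility hypothesis of Haagerup's descent theorem---which requires delicate control of the interplay between $\delta_\G^{it}$, the scaling group, and the right $\HH$-coideal structure.
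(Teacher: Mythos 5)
Your treatment of \prettyref{enu:inv2}$\iff$\prettyref{enu:inv3} is essentially the paper's argument (group-likeness of $(\delta^{\mathrm{u}}_{\G})^{it}$, the identity \eqref{eq:right_quant_hom__comult}, and the fact that group-like unitaries in $\M{\CzU{\HH}}$ are determined by their reduced images), and your transfer of complete invariance between $\mu$ and $\mu\circ T_R$ via the intertwining $(\om\tensor\i)\circ\Delta_{\G/\HH}\circ T_R=T_R\circ(\om\tensor\i)\circ\Delta_{\G}$ matches the paper's \prettyref{lem:quot_inv_nsf_weight}. The gap is in the central existence step. There is no ``Haagerup descent theorem'' producing, from the hypothesis that $\sigma^{\varphi_\G}_t$ preserves $\Linfty{\G/\HH}$, a weight $\mu$ with $\varphi_\G=\mu\circ T_R$ for the \emph{given} operator-valued weight $T_R$. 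Haagerup's theorem goes the other way: given two n.s.f.\ weights $\phi$ on $M$ and $\mu$ on $N\subseteq M$ with $\sigma^{\phi}_t|_N=\sigma^{\mu}_t$, it produces \emph{some} operator-valued weight $S$ with $\phi=\mu\circ S$ --- but you have no candidate $\mu$ yet, and even if you did, nothing forces $S=T_R$. Modular-group preservation is genuinely insufficient: for $M=B(\H)\tensorn N$, $T=\operatorname{Tr}\tensor\i$ and $\phi=\operatorname{Tr}(h\,\cdot)\tensor\mu$ with $h$ positive invertible non-scalar, $\sigma^{\phi}_t$ preserves $\C\one\tensor N$ and restricts there to $\sigma^{\mu}_t$, yet $\phi$ does not factor through $T$. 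The correct characterization of the range of $\mu\mapsto\mu\circ T_R$ is a Connes-cocycle condition relative to a reference weight, and this is exactly what the paper imports as \prettyref{thm:induced_weights_from_quotients} (Kustermans, Propositions 8.6 and 8.7): $\phi=\theta\circ\mathcal{T}$ for some $\theta$ iff $\a_r\bigl((D\phi:D\psi_{\G})_t\bigr)=(D\phi:D\psi_{\G})_t\tensor\delta_{\HH}^{-it}$. Note the twist by $\delta_{\HH}^{-it}$: the right Haar weight $\psi_{\G}$ does \emph{not} itself factor through $\mathcal{T}$, so you cannot set up a naive cocycle comparison either.

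Once that theorem is in hand, both directions between \prettyref{enu:inv1} and \prettyref{enu:inv2} collapse to the single computation $(D\psi_{\G}:D\varphi_{\G})_t=\nu^{\frac12 it^2}\delta_{\G}^{it}$, which converts Kustermans' condition for $\phi=\varphi_\G$ into $\a_r(\delta_{\G}^{it})=\delta_{\G}^{it}\tensor\delta_{\HH}^{it}$; your separate plan of ``reading off'' \prettyref{enu:inv2} from $\sigma^{\varphi_\G}_t|_{\Linfty{\G/\HH}}=\sigma^{\mu}_t$ faces the same obstruction in reverse, since that restriction only sees $\Linfty{\G/\HH}$ and cannot by itself recover an identity in $\Linfty{\G}\tensorn\Linfty{\HH}$. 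The step you flag as ``the hardest'' is therefore not the hard part of the proof --- it is the wrong hypothesis to be verifying. I recommend replacing the descent argument wholesale by the cocycle characterization.
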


We make note of the following consequence.

\begin{cor}\label{cor:g-so-h-unim}
  Let $\HH$ be a closed quantum subgroup of a LCQG $\G$ in the sense of Vaes, and suppose the left action of $\G$ on $\Linfty{\G/\HH}$ has a completely invariant n.s.f.~weight. If $\bG$ is unimodular, then so is $\bH$.
\end{cor}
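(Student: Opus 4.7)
The plan is to apply Theorem~\ref{thm:quot_inv_nsf_weight} directly: the hypothesis on $\Linfty{\G/\HH}$ is exactly condition \prettyref{enu:inv1} of that theorem, so we obtain access to the equivalent formulation \prettyref{enu:inv2}, namely
\[
  \a_r(\delta_{\G}^{it}) = \delta_{\G}^{it}\tensor\delta_{\HH}^{it}\qquad(\forall t\in\R),
\]
where $\a_r$ is the right action of $\HH$ on $\Linfty{\G}$ from \eqref{eq:ar-action}. Unimodularity of $\bG$ means precisely that $\delta_{\G}=\one$, so $\delta_{\G}^{it}=\one$ for every $t$. Substituting this into the displayed identity and using that $\a_r$ is a unital normal $*$-homomorphism (so $\a_r(\one)=\one\tensor\one$), we obtain
\[
  \one\tensor\one = \one\tensor \delta_{\HH}^{it}\qquad(\forall t\in\R),
\]
whence $\delta_{\HH}^{it}=\one$ for every $t\in\R$, and by functional calculus $\delta_{\HH}=\one$, i.e., $\bH$ is unimodular.

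There is essentially no obstacle here, since the corollary is an arithmetic consequence of the theorem; the only point to be careful about is that we legitimately have access to condition \prettyref{enu:inv2} (equivalently \prettyref{enu:inv3}), which is exactly what Theorem~\ref{thm:quot_inv_nsf_weight} supplies. As a sanity check, one can alternatively use condition \prettyref{enu:inv3}: unimodularity of $\G$ gives $(\delta_\G^{\mathrm u})^{it}=\one$, and applying the strong quantum homomorphism $\pi$ (which is unital on multipliers) to the identity $\pi((\delta_\G^{\mathrm u})^{it})=(\delta_\HH^{\mathrm u})^{it}$ yields $(\delta_\HH^{\mathrm u})^{it}=\one$ for all $t$, giving the same conclusion.
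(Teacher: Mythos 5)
Your proposal is correct and follows exactly the paper's own argument: the paper likewise invokes condition \prettyref{enu:inv2} of \prettyref{thm:quot_inv_nsf_weight} and notes that $\delta_{\bG}^{it}=\one$ for all $t$ forces $\delta_{\bH}^{it}=\one$. You merely spell out the intermediate step $\a_r(\one)=\one\tensor\one$, which the paper leaves implicit.
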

\begin{proof}
  This is immediate from \prettyref{thm:quot_inv_nsf_weight}: given condition \prettyref{enu:inv2}, $\delta_{\bG}^{it}=1$ for all $t$ implies the same for $\delta_{\bH}$.
\end{proof}

Suppose that $\HH$ is a closed quantum subgroup of a LCQG $\G$ in the sense of Vaes. By \citep[Theorem 5.2]{Kasprzak_Khosravi_Soltan__int_act_QGs} (attributed there to \citep[Proposition 3.12]{De_Commer__Galois_obj_cocy_tw_LCQG}), the right action $\a_r$ of $\HH$ on $\Linfty{\G}$ (see \prettyref{sec:prelim}) is \emph{integrable} in the sense of \citep[\S 6]{Kustermans__induced_corep_LCQG}; see also \citep[Corollary 5.6]{Kasprzak_Khosravi_Soltan__int_act_QGs}.  Thus, the function $\mathcal{T}:=(\i\tensor\varphi_{\HH})\circ\a_r:\Linfty{\G}_{+}\to\epp{\Linfty{\G}}$ is an n.s.f.~operator-valued weight from $\Linfty{\G}$ to $\Linfty{\G/\HH}$ under the canonical embedding of $\epp{\Linfty{\G/\HH}}$ inside $\epp{\Linfty{\G}}$ (\citep[Proposition 1.3]{Vaes__unit_impl_LCQG}, noting the different convention in the definition of a (co-) representation; see \citep[\S 8, p.~452]{Kustermans__induced_corep_LCQG}).  Therefore, each n.s.f.~weight $\theta$ on $\Linfty{\G/\HH}$ induces the n.s.f.~weight $\theta\circ\mathcal{T}$ on $\Linfty{\G}$. These weights are characterized by the next result, which generalizes the classical correspondence alluded to above between quasi-invariant measures on $G/H$ and certain measures on $G$.

\begin{thm}[{\citep[Propositions 8.6 and 8.7]{Kustermans__induced_corep_LCQG}}]
  \label{thm:induced_weights_from_quotients} Let $\HH$ be a closed quantum subgroup of a LCQG $\G$ in the sense of Vaes. Use the notation of the previous paragraph. An n.s.f.~weight $\phi$ on $\Linfty{\G}$ has the form $\theta\circ\mathcal{T}$ for some n.s.f.~weight $\theta$ on $\Linfty{\G/\HH}$ if and only if for all $t\in\R$,
  \[
    \a_r({(D\phi:D\psi_{\G})}_{t})={(D\phi:D\psi_{\G})}_{t}\tensor\delta_{\HH}^{-it},
  \]
  where $(D\phi:D\psi_{\G})$ is Connes' cocycle derivative of $\phi$ with respect to $\psi_{\G}$. In that case, $\theta$ is unique. 
\end{thm}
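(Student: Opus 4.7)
The plan is to first establish (1)$\iff$(2) using the inducing machinery of \prettyref{thm:induced_weights_from_quotients}, and then argue (2)$\iff$(3) directly at the level of the affiliated modular element by lifting to the universal C$^*$-algebras. The key compatibility underpinning the first equivalence is
\[
  \Delta_\G \circ \mathcal{T} = (\i \tensor \mathcal{T}) \circ \Delta_\G,
\]
obtained by slicing the right-quantum-homomorphism identity $(\Delta_\G \tensor \i) \circ \a_r = (\i \tensor \a_r) \circ \Delta_\G$ against $\varphi_\HH$ in the third leg, with the usual care taken to extend to the unbounded operator-valued weight.

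For (1)$\Rightarrow$(2), suppose $\theta$ is a completely invariant n.s.f.\ weight on $\Linfty{\G/\HH}$ and set $\phi := \theta \circ \mathcal{T}$, an n.s.f.\ weight on $\Linfty{\G}$. Using the compatibility above together with complete invariance of $\theta$, one checks that $\phi$ is left-invariant on $\Linfty{\G}$, so by uniqueness of the left Haar weight $\phi = c\,\varphi_\G$ for some $c > 0$. Feeding this into \prettyref{thm:induced_weights_from_quotients} and substituting the standard formula $(D\varphi_\G : D\psi_\G)_t = \lambda^{it^2/2}\delta_\G^{-it}$ (with $\lambda$ the scaling constant of $\G$) yields condition (2). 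Conversely, for (2)$\Rightarrow$(1), the same cocycle computation run backwards shows that $\phi := \varphi_\G$ satisfies the hypothesis of \prettyref{thm:induced_weights_from_quotients}, furnishing a unique n.s.f.\ weight $\theta$ on $\Linfty{\G/\HH}$ with $\theta \circ \mathcal{T} = \varphi_\G$. To verify that $\theta$ is completely invariant I would introduce, for each $\om \in \Lone{\G}_+$, the normal weight $\theta_\om := \om \circ (\i \tensor \theta) \circ \Delta_\G$ on $\Linfty{\G/\HH}$; the compatibility together with left invariance of $\varphi_\G$ gives $\theta_\om \circ \mathcal{T} = \om(\one)\,\varphi_\G$, and $\om(\one)\theta$ has the same property. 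The uniqueness clause in \prettyref{thm:induced_weights_from_quotients} therefore forces $\theta_\om = \om(\one)\theta$ for every such $\om$, from which invariance of $\theta$ on $\mathcal{M}_\theta^+$ is immediate; complete invariance then follows because both $y \mapsto (\i \tensor \theta)(\Delta_\G(y))$ and $y \mapsto \theta(y)\one$ are normal maps from $\Linfty{\G/\HH}_+$ to $\epp{\Linfty{\G}}$ agreeing on a dense subcone. Uniqueness of $\theta$ up to scaling likewise drops out of the uniqueness clause in \prettyref{thm:induced_weights_from_quotients} paired with uniqueness of the left Haar weight.

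For (2)$\iff$(3), applying \eqref{eq:right_quant_hom__comult} to $(\delta_\G^{\mathrm{u}})^{it}$---which satisfies $\Delta_\G^{\mathrm{u}}((\delta_\G^{\mathrm{u}})^{it}) = (\delta_\G^{\mathrm{u}})^{it} \tensor (\delta_\G^{\mathrm{u}})^{it}$ by its defining universal property---produces
\[
  \a_r(\delta_\G^{it}) = \delta_\G^{it} \tensor \Lambda_\HH\bigl(\pi((\delta_\G^{\mathrm{u}})^{it})\bigr).
\]
Hence (2) is equivalent to $\Lambda_\HH(\pi((\delta_\G^{\mathrm{u}})^{it})) = \delta_\HH^{it}$. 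Since $\pi$ intertwines co-multiplications, the one-parameter family $\pi((\delta_\G^{\mathrm{u}})^{it})$ consists of group-like unitaries affiliated with $\CzU{\HH}$, and $(\delta_\HH^{\mathrm{u}})^{it}$ is uniquely characterized among such families by its image $\delta_\HH^{it}$ under the reducing morphism (Kustermans' universal characterization of the modular element); this promotes the equality at the reduced level to an equality at the universal level, giving (3), while the reverse implication is immediate from the same computation. The main obstacle I anticipate is confirming \emph{complete}---as opposed to merely partial---invariance of $\theta$ in the (2)$\Rightarrow$(1) step, since \prettyref{thm:induced_weights_from_quotients} only characterizes $\theta\circ\mathcal{T}$ and the invariance properties of $\varphi_\G$ only transfer through $\mathcal{T}$ on its domain of finiteness; the normal-extension argument above is the place requiring the most care.
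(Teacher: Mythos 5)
Your proposal does not prove the assigned statement. What you were asked to prove is \prettyref{thm:induced_weights_from_quotients} itself: the characterization of those n.s.f.\ weights $\phi$ on $\Linfty{\G}$ that factor as $\theta\circ\mathcal{T}$ through the canonical operator-valued weight $\mathcal{T}=(\i\tensor\varphi_{\HH})\circ\a_r$, namely that such a factorization exists if and only if $\a_r({(D\phi:D\psi_{\G})}_{t})={(D\phi:D\psi_{\G})}_{t}\tensor\delta_{\HH}^{-it}$ for all $t\in\R$, together with the uniqueness of $\theta$. Your write-up instead proves the equivalence of the three conditions of \prettyref{thm:quot_inv_nsf_weight} (existence of a completely invariant n.s.f.\ weight on $\Linfty{\G/\HH}$, the identity $\a_r(\delta_{\G}^{it})=\delta_{\G}^{it}\tensor\delta_{\HH}^{it}$, and $\delta_{\G}$ restricting to $\delta_{\HH}$), and at both of its central steps it invokes \prettyref{thm:induced_weights_from_quotients} as a black box (``feeding this into \prettyref{thm:induced_weights_from_quotients}'', ``the uniqueness clause in \prettyref{thm:induced_weights_from_quotients} therefore forces $\theta_{\om}=\om(\one)\theta$''). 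Read as a proof of the assigned statement, this is circular: the statement to be proved is assumed wholesale.

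Note also that the paper contains no proof of this statement to compare against---it is quoted from Kustermans (Propositions 8.6 and 8.7 of the cited article). A genuine blind proof would have to engage with the operator-valued-weight machinery your proposal bypasses: for instance, the Haagerup-type characterization that an n.s.f.\ weight $\phi$ on $\Linfty{\G}$ is of the form $\theta\circ\mathcal{T}$ if and only if the cocycle ${(D\phi:D(\theta_{0}\circ\mathcal{T}))}_{t}$ lies in $\Linfty{\G/\HH}$ for all $t$ (for some fixed n.s.f.\ weight $\theta_{0}$ on $\Linfty{\G/\HH}$), followed by a computation of ${(D(\theta_{0}\circ\mathcal{T}):D\psi_{\G})}_{t}$ showing that the second-leg correction under $\a_r$ is exactly $\delta_{\HH}^{-it}$; uniqueness of $\theta$ would then follow from the surjectivity of $\mathcal{T}$ onto $\epp{\Linfty{\G/\HH}}$, not from a clause one can quote. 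None of this appears in your proposal. As a silver lining, the argument you did write is essentially the paper's own proof of \prettyref{thm:quot_inv_nsf_weight} (together with \prettyref{lem:quot_inv_nsf_weight}), so it is sound work---just aimed at the wrong theorem.
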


In the proof of \prettyref{thm:quot_inv_nsf_weight} we will use a few standard manipulations of operator-valued weights, such as extending them (normally) to the extended positive parts, composing and tensoring them, etc. Note that just like normal operator-valued weights, a positive normal linear map $S$ from a von Neumann algebra $M$ to a von Neumann algebra $N$ extends uniquely to a map $S:\widehat{M}_{+}\to\widehat{N}_{+}$ that is normal in the sense that if $\left(m_{i}\right)$ is an increasing net in $\widehat{M}_{+}$ that converges (pointwise, on $M_{*}^{+}$) to $m\in\widehat{M}_{+}$, then the increasing net $\left(Sm_{i}\right)$ converges to $Sm$.

\begin{lem}
  \label{lem:quot_inv_nsf_weight}
  In the setting of \prettyref{thm:quot_inv_nsf_weight}, denote by $\mathcal{T}$ the canonical operator-valued weight from $\Linfty{\G}$ to $\Linfty{\G/\HH}$. Recall that $\Delta_{\G/\HH}$ stands for the left action  of $\G$ on $\Linfty{\G/\HH}$.
  \begin{enumerate}
  	\item \label{enu:quot_inv_nsf_weight__1} For every $\om \in \Lone{\G}_+$ we have $(\om\tensor\i)\circ\Delta_{\G/\HH}\circ\mathcal{T}=\mathcal{T}\circ(\om\tensor\i)\circ\Delta_{\G}$, where in the left hand side we  extend $(\om\tensor\i)\circ\Delta_{\G/\HH}$ to $\epp{\Linfty{\G/\HH}}$.
  	\item \label{enu:quot_inv_nsf_weight__2} An n.s.f.~weight $\theta$ on $\Linfty{\G/\HH}$ is completely invariant under  $\Delta_{\G/\HH}$ if and only if $\theta\circ\mathcal{T}$ is completely invariant under $\Delta_\G$, if and only if $\theta\circ\mathcal{T}$ equals $\varphi_{\G}$ up to scaling by a positive scalar.
  \end{enumerate}
\end{lem}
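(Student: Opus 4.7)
The plan for (1) is to leverage the compatibility
\[
(\Delta_{\bG}\tensor\i)\circ\alpha_{r}=(\i\tensor\alpha_{r})\circ\Delta_{\bG},
\]
inherited by $\alpha_{r}$ from being the extension of a right quantum homomorphism. Combined with $\mathcal{T}=(\i\tensor\varphi_{\bH})\circ\alpha_{r}$, this yields the equivariance $\Delta_{\bG}\circ\mathcal{T}=(\i\tensor\mathcal{T})\circ\Delta_{\bG}$ at the level of $\epp{\Linfty{\bG}}$; slicing by $\om\tensor\i$ and using normality of $\mathcal{T}$ then gives (1). The delicate technical step is the interchange of $\Delta_{\bG}$ with the partial weight $(\i\tensor\varphi_{\bH})$: on an elementary tensor $y\tensor z\in\Linfty{\bG}\tensorn\Linfty{\bH}$ both $\Delta_{\bG}\circ(\i\tensor\varphi_{\bH})$ and $(\i\tensor\i\tensor\varphi_{\bH})\circ(\Delta_{\bG}\tensor\i)$ evaluate to $\Delta_{\bG}(y)\cdot\varphi_{\bH}(z)\in\epp{\Linfty{\bG}\tensorn\Linfty{\bG}}$, and both maps are normal, so they agree on all of $\epp{\Linfty{\bG}\tensorn\Linfty{\bH}}$.

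For (2), label the three conditions (A), (B), (C). The implication (A)$\Rightarrow$(B) is immediate by applying $\theta$ to the identity in (1): for $x\in\Linfty{\bG}_{+}$ and $\om\in\Lone{\bG}_{+}$, complete invariance of $\theta$ under $\Delta_{\bG/\bH}$ gives $(\om\tensor\theta\circ\mathcal{T})(\Delta_{\bG}(x))=\theta\bigl((\om\tensor\i)\Delta_{\bG/\bH}(\mathcal{T}(x))\bigr)=\theta(\mathcal{T}(x))\om(\one)$, which is precisely complete invariance of $\theta\circ\mathcal{T}$ under $\Delta_{\bG}$. The equivalence (B)$\Leftrightarrow$(C) is then the uniqueness (up to positive scalar) of the left Haar weight: $\theta\circ\mathcal{T}$ is n.s.f.~as the composition of an n.s.f.~weight with an n.s.f.~operator-valued weight, so complete left-invariance forces it to be a positive multiple of $\varphi_{\bG}$; the converse is clear from \prettyref{rem:inv}.

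The remaining and main implication is (C)$\Rightarrow$(A). Assuming $\theta\circ\mathcal{T}=c\varphi_{\bG}$, fix a faithful normal state $\om\in\Lone{\bG}_{+}$ and set $\theta'_{\om}(y):=(\om\tensor\theta)(\Delta_{\bG/\bH}(y))$ for $y\in\Linfty{\bG/\bH}_{+}$. Part (1) together with left-invariance of $\varphi_{\bG}$ gives
\[
\theta'_{\om}(\mathcal{T}(x))=\theta\bigl(\mathcal{T}((\om\tensor\i)\Delta_{\bG}(x))\bigr)=c\varphi_{\bG}(x)=\theta(\mathcal{T}(x))\qquad(x\in\Linfty{\bG}_{+}),
\]
so $\theta'_{\om}\circ\mathcal{T}=\theta\circ\mathcal{T}$. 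The uniqueness assertion in \prettyref{thm:induced_weights_from_quotients} then forces $\theta'_{\om}=\theta$, i.e.~$(\om\tensor\theta)(\Delta_{\bG/\bH}(y))=\theta(y)\om(\one)$ for every $y\in\Linfty{\bG/\bH}_{+}$; letting $\om$ range over a separating family of normal states upgrades this to $(\i\tensor\theta)(\Delta_{\bG/\bH}(y))=\theta(y)\one$ in $\epp{\Linfty{\bG}}$. The principal obstacle I anticipate is verifying that $\theta'_{\om}$ is actually n.s.f.~so that \prettyref{thm:induced_weights_from_quotients} applies: normality and (via faithfulness of $\om$) faithfulness are routine, whereas semifiniteness must be deduced from $\theta'_{\om}\circ\mathcal{T}=c\varphi_{\bG}$ being n.s.f.~together with appropriate density properties of $\mathcal{T}(\mathcal{M}^{+}_{\mathcal{T}})$ inside $\Linfty{\bG/\bH}$.
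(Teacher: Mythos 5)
Your part (1) and the implications (A)$\implies$(B)$\iff$(C) match the paper's argument. The problem is the remaining implication (C)$\implies$(A), where your detour through the uniqueness clause of \prettyref{thm:induced_weights_from_quotients} leaves exactly the gaps you flag, and they are not minor. First, to apply that uniqueness you must know that $\theta'_{\om}$ is n.s.f., and semifiniteness does not follow readily from $\theta'_{\om}\circ\mathcal{T}=\theta\circ\mathcal{T}$ being semifinite: for $x\in\mathcal{M}_{\theta\circ\mathcal{T}}^{+}$ the element $\mathcal{T}(x)$ lives in $\epp{\Linfty{\G/\HH}}$ and need not be a bounded element of $\Linfty{\G/\HH}_{+}$, so finiteness of $\theta'_{\om}(\mathcal{T}(x))$ does not by itself produce a $\sigma$-weakly dense set of positive elements of finite $\theta'_{\om}$-weight. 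Second, faithfulness of $\theta'_{\om}$ requires $\om$ faithful, and a faithful normal \emph{state} on $\Linfty{\G}$ exists only when $\Linfty{\G}$ is $\sigma$-finite, which fails already for $\linfty{\Gamma}$ with $\Gamma$ an uncountable discrete group; your final ``separating family'' step inherits the same problem, since each member of the family must be faithful for the uniqueness argument to apply to it.

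The missing idea that makes the whole detour unnecessary is Haagerup's theorem that an n.s.f.\ operator-valued weight maps the extended positive part \emph{onto} the extended positive part of the target: $\mathcal{T}\bigl(\epp{\Linfty{\G}}\bigr)=\epp{\Linfty{\G/\HH}}$ (this is \citep[Proposition 2.5]{Haagerup__oper_val_weights_1}, cited in the paper's proof). Granting this, complete invariance of $\theta$, i.e.\ $(\i\tensor\theta)\circ\Delta_{\G/\HH}=\theta(\cdot)\one$ on $\epp{\Linfty{\G/\HH}}$, is \emph{equivalent} to the precomposed identity $(\i\tensor\theta)\circ\Delta_{\G/\HH}\circ\mathcal{T}=(\theta\circ\mathcal{T})(\cdot)\one$ on $\epp{\Linfty{\G}}$ --- in both directions, with no auxiliary weight and no faithful state --- and your part (1) converts the latter into complete invariance of $\theta\circ\mathcal{T}$. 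This closes (B)$\implies$(A) (hence (C)$\implies$(A)) in one line and is how the paper proceeds.
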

\begin{proof} In what follows we tacitly extend maps to the extended positive parts of the respective von Neumann algebras as required for the statements to make sense.

\ref{enu:quot_inv_nsf_weight__1} Writing again $j$ for the inclusion map of $\Linfty{\G/\HH}$ in $\Linfty{\G}$ and recalling that $j\circ\mathcal{T}=(\i\tensor\varphi_{\HH})\circ\a_r$, for all $\om \in \Lone{\G}_+$ we have
\[
\begin{split}j\circ(\om\tensor\i)\circ\Delta_{\G/\HH}\circ\mathcal{T} & =(\om\tensor\i)\circ\Delta_{\G}\circ j\circ\mathcal{T}=(\om\tensor\i)\circ\Delta_{\G}\circ(\i\tensor\varphi_{\HH})\circ\a_r\\
& =(\om\tensor\i)\circ(\i\tensor\i\tensor\varphi_{\HH})\circ(\Delta_{\G}\tensor\i)\circ\a_r\\
& =(\om\tensor\i)\circ(\i\tensor\i\tensor\varphi_{\HH})\circ(\i\tensor\a_r)\circ\Delta_{\G}\\
& =(\i\tensor\varphi_{\HH})\circ\a_r\circ(\om\tensor\i)\circ\Delta_{\G}=j\circ\mathcal{T}\circ(\om\tensor\i)\circ\Delta_{\G}
\end{split}
\]
(the reader can easily justify equalities like $\Delta_{\G}\circ(\i\tensor\varphi_{\HH})=(\i\tensor\i\tensor\varphi_{\HH})\circ(\Delta_{\G}\tensor\i)$ as maps from $\epp{\left(\Linfty{\G}\tensorn\Linfty{\HH}\right)}$ to $\epp{\left(\Linfty{\G}\tensorn\Linfty{\G}\right)}$). Thus, $(\om\tensor\i)\circ\Delta_{\G/\HH}\circ\mathcal{T}=\mathcal{T}\circ(\om\tensor\i)\circ\Delta_{\G}$.
	
  \ref{enu:quot_inv_nsf_weight__2} Complete invariance of $\theta$ under $\Delta_{\G/\HH}$ means that $(\i\tensor\theta)\circ\Delta_{\G/\HH}=\theta(\cdot)\one$ as maps
  \begin{equation*}
    \Linfty{\G/\HH}_{+}\to\epp{\Linfty{\G}}
  \end{equation*}
  or equivalently as maps $\epp{\Linfty{\G/\HH}}\to\epp{\Linfty{\G}}$. Since $\mathcal{T}$ maps $\epp{\Linfty{\G}}$ onto $\epp{\Linfty{\G/\HH}}$ \citep[Proposition 2.5]{Haagerup__oper_val_weights_1}, that is equivalent to the equality
\[
(\i\tensor\theta)\circ\Delta_{\G/\HH}\circ\mathcal{T}=(\theta\circ\mathcal{T})(\cdot)\one,
\]
which is the same as
\begin{equation*}
  \theta\circ(\om\tensor\i)\circ\Delta_{\G/\HH}\circ\mathcal{T}=\om(\one)\theta\circ\mathcal{T}\qquad(\forall\om\in\Lone{\G}_{+}).
\end{equation*}
By \ref{enu:quot_inv_nsf_weight__1}, this is equivalent to 
\begin{equation*}
\theta\circ\mathcal{T}\circ(\om\tensor\i)\circ\Delta_{\G}=\om(\one)\theta\circ\mathcal{T}\qquad(\forall\om\in\Lone{\G}_{+}),
\end{equation*}
meaning that $\theta\circ\mathcal{T}$ is completely invariant under $\Delta_\G$.
From the uniqueness of the left Haar weight of $\G$ (and \prettyref{rem:inv}), this is equivalent to $\theta\circ\mathcal{T}$ being equal to $\varphi_{\G}$ up to scaling by a positive scalar.
\end{proof}

\begin{proof}[Proof of \prettyref{thm:quot_inv_nsf_weight}]
  The up-to-scaling uniqueness is already part of \prettyref{thm:induced_weights_from_quotients}.
  
  By \prettyref{lem:quot_inv_nsf_weight} \ref{enu:quot_inv_nsf_weight__2}, an n.s.f.~weight $\theta$ on $\Linfty{\G/\HH}$ is completely invariant under the left action $\Delta_{\G/\HH}$ of $\G$ on $\Linfty{\G/\HH}$ if and only if $\theta\circ\mathcal{T}$ equals $\varphi_{\G}$ up to scaling. By \prettyref{thm:induced_weights_from_quotients}, such $\theta$ exists if and only if
\[
\a_r({(D\varphi_{\G}:D\psi_{\G})}_{t})={(D\varphi_{\G}:D\psi_{\G})}_{t}\tensor\delta_{\HH}^{-it}\qquad(\forall t\in\R),
\]
and since ${(D\psi_{\G}:D\varphi_{\G})}_{t}=\nu^{\frac{1}{2}it^{2}}\delta_{\G}^{it}$ for all $t\in\R$,
this is equivalent to 
\begin{equation}
\a_r(\delta_{\G}^{it})=\delta_{\G}^{it}\tensor\delta_{\HH}^{it}\qquad(\forall t\in\R).\label{eq:quot_inv_nsf_weight}
\end{equation}
This proves the equivalence of the first two conditions in \prettyref{thm:quot_inv_nsf_weight}. 

Denoting by $\pi:\CzU{\G}\to\CzU{\HH}$ the strong quantum homomorphism that corresponds to $\HH$ being a closed quantum subgroup of $\G$, we have $\a_r|_{\Cz{\G}}\circ\Lambda_{\G}=(\Lambda_{\G}\tensor\Lambda_{\HH}\pi)\circ\Delta_{\G}^{\mathrm{u}}$ as morphisms from $\CzU{\G}$ to $\Cz{\G}\tensormin\Cz{\HH}$ by \eqref{eq:right_quant_hom__comult}. Since for all $t\in\R$ we have $\left(\delta_{\G}^{\mathrm{u}}\right)^{it}\in\M{\CzU{\G}}$, $\Delta_{\G}^{\mathrm{u}}(\left(\delta_{\G}^{\mathrm{u}}\right)^{it})=\left(\delta_{\G}^{\mathrm{u}}\right)^{it}\tensor\left(\delta_{\G}^{\mathrm{u}}\right)^{it}$ and $\Lambda_{\G}(\left(\delta_{\G}^{\mathrm{u}}\right)^{it})=\delta_{\G}^{it}$, condition \prettyref{eq:quot_inv_nsf_weight} is equivalent to
\[
\delta_{\G}^{it}\tensor\Lambda_{\HH}\bigl(\pi(\left(\delta_{\G}^{\mathrm{u}}\right)^{it})\bigr)=\delta_{\G}^{it}\tensor\delta_{\HH}^{it}\qquad(\forall t\in\R),
\]
that is,
\[
\Lambda_{\HH}\bigl(\pi(\left(\delta_{\G}^{\mathrm{u}}\right)^{it})\bigr)=\delta_{\HH}^{it}\qquad(\forall t\in\R).
\]
That this is equivalent to $\pi(\left(\delta_{\G}^{\mathrm{u}}\right)^{it})=\left(\delta_{\HH}^{\mathrm{u}}\right)^{it}$ for all $t\in\R$ (i.e.~to $\delta_{\G}$ restricting to $\delta_{\HH}$) 
follows from \citep[Result 6.1]{Kustermans__LCQG_universal}, because for every $t \in \R$, both $\pi(\left(\delta_{\G}^{\mathrm{u}}\right)^{it})$ and $\left(\delta_{\HH}^{\mathrm{u}}\right)^{it}$ are group-like unitaries in $\M{\CzU{\bH}}$ (that is, $1$-dimensional representations of $\HH$ in the universal sense), so they are equal if and only if applying $\Lambda_{\HH}$ to them yields the same element, namely $\delta_{\HH}^{it}$ (also compare the proof of \citep[Proposition 10.1]{Kustermans__LCQG_universal}).
\end{proof}

\begin{rem}\label{rem:hom_sp_inv_weight_faithful}
  If $\HH$ is a closed quantum subgroup of a LCQG $\G$ in the sense of Woronowicz, then every non-zero normal semi-finite weight $\theta$ on $\Linfty{\G/\HH}$ that is completely invariant under the left action of $\G$ is necessarily faithful. Indeed, if $0 \neq x \in \Linfty{\G/\HH}_+$, apply the right Haar weight $\psi_{\G}$ to the equality $(\i\tensor\theta)(\Delta_{\G / \bH}(x)) = \theta(x)\one$. The left hand side equals $\psi_{\G}(x) \theta(\one)$ because $\Delta_{\G / \bH}$ is the restriction of $\Delta_{\G}$ (use the complete right invariance of $\psi_{\G}$). Since $\psi_{\G}$ is faithful and $\theta$ is non-zero, we must have $\theta(x) > 0$.
  
  We will henceforth use this implicitly without further comment. 
\end{rem}

It is now a simple remark that the conditions in \prettyref{thm:quot_inv_nsf_weight} hold for {\it normal} $\bH\trianglelefteq \bG$.

\begin{cor}\label{cor:norm-com-inv}
  If $\bH\trianglelefteq\bG$ is a normal closed quantum subgroup then the left action of $\G$ on $\Linfty{\G/\HH}$ has a completely invariant n.s.f.~weight, namely the left Haar weight $\varphi_{\bG/\bH}$ of $\bG/\bH$.
\end{cor}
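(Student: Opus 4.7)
The plan is to prove this as a near-immediate consequence of the fact (stated in the theorem recalled right after Definition~3.3) that normality of $\bH$ upgrades $(\Linfty{\bG/\bH},\Delta_{\bG}|_{\Linfty{\bG/\bH}})$ to a genuine LCQG, so that it possesses its own left Haar weight $\varphi_{\bG/\bH}$. The claim is then essentially that ``being left-invariant for the comultiplication of $\bG/\bH$'' and ``being completely invariant under the left $\bG$-action on $\Linfty{\bG/\bH}$'' are literally the same condition, because the two maps agree.

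More precisely, my proof would proceed as follows. First, invoke normality to get $\Delta_{\bG}(\Linfty{\bG/\bH})\subseteq \Linfty{\bG/\bH}\tensorn\Linfty{\bG/\bH}$ and the associated LCQG structure $\bG/\bH$ with comultiplication $\Delta_{\bG/\bH}:=\Delta_{\bG}|_{\Linfty{\bG/\bH}}$. Second, take $\varphi_{\bG/\bH}$ to be the left Haar weight of this LCQG; by Proposition~3.1 of \cite{Kustermans_Vaes__LCQG_von_Neumann} (the statement alluded to in Remark~\ref{rem:inv}), it is \emph{completely} left-invariant, meaning
\[
  (\i\tensor\varphi_{\bG/\bH})\circ\Delta_{\bG/\bH}=\varphi_{\bG/\bH}(\cdot)\,\one_{\bG/\bH}
\]
as maps $\Linfty{\bG/\bH}_{+}\to\epp{\Linfty{\bG/\bH}}$.

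Third, the left action of $\bG$ on $\Linfty{\bG/\bH}$ is, by construction, the map $\Delta_{\bG}|_{\Linfty{\bG/\bH}}$ regarded as taking values in $\Linfty{\bG}\tensorn\Linfty{\bG/\bH}$; this is the very same map as $\Delta_{\bG/\bH}$, only with an enlarged codomain via the inclusion $\Linfty{\bG/\bH}\hookrightarrow\Linfty{\bG}$. Applying this inclusion to the invariance identity above (and using that the unit of $\Linfty{\bG/\bH}$ coincides with $\one_{\bG}$, since $\Linfty{\bG/\bH}$ is a unital subalgebra of $\Linfty{\bG}$) yields
\[
  (\i\tensor\varphi_{\bG/\bH})\circ\bigl(\Delta_{\bG}|_{\Linfty{\bG/\bH}}\bigr)=\varphi_{\bG/\bH}(\cdot)\,\one_{\bG}
\]
as maps $\Linfty{\bG/\bH}_{+}\to\epp{\Linfty{\bG}}$. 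This is exactly the definition of complete invariance of $\varphi_{\bG/\bH}$ under the left action of $\bG$ on $\Linfty{\bG/\bH}$ in the sense of Definition~\ref{def:invariant_weight}.

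There is no real obstacle here: the entire content is that the comultiplication of the quotient LCQG \emph{is} the restriction of $\Delta_{\bG}$, so invariance for one interpretation gives invariance for the other for free. The only minor technical point is keeping straight that invariance, as defined in Definition~\ref{def:invariant_weight}, requires the codomain $\epp{\Linfty{\bG}}$ rather than $\epp{\Linfty{\bG/\bH}}$, and this is resolved by the canonical embedding of extended positive parts induced by the inclusion of von Neumann algebras.
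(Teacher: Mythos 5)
Your proof is correct, but it takes a different route from the paper's. The paper deduces the corollary from Lemma~\ref{lem:quot_inv_nsf_weight}~\ref{enu:quot_inv_nsf_weight__2} (a weight $\theta$ on $\Linfty{\G/\HH}$ is completely invariant iff $\theta\circ\mathcal{T}$ equals $\varphi_{\G}$ up to scaling, where $\mathcal{T}=(\i\tensor\varphi_{\HH})\circ\a_r$ is the canonical operator-valued weight), combined with the externally cited Weyl-type disintegration formula $\varphi_{\bG/\bH}\circ\mathcal{T}=\varphi_{\bG}$ for normal subgroups. You instead verify complete invariance directly: the comultiplication of the quotient LCQG is by construction the (co)restriction of $\Delta_{\bG}$, so the complete left invariance of its Haar weight --- which does require the upgrade from invariance on $\mathcal{M}_{\varphi}^{+}$ to all of $\Linfty{\bG/\bH}_{+}$, correctly supplied by \citep[Proposition 3.1]{Kustermans_Vaes__LCQG_von_Neumann} as in \prettyref{rem:inv} --- is literally the complete invariance under the left $\bG$-action, modulo the compatible embedding of extended positive parts. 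Your argument is more elementary and self-contained, avoiding both $\mathcal{T}$ and the external citation to \cite{chk}; what the paper's route buys is the explicit identification $\varphi_{\bG/\bH}\circ\mathcal{T}=\varphi_{\bG}$, which fits the operator-valued-weight machinery running through the rest of \prettyref{sec:invariant-weights}. Both are valid proofs of the stated corollary.
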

\begin{proof}
  This follows from \prettyref{lem:quot_inv_nsf_weight} \ref{enu:quot_inv_nsf_weight__2} and, with $\cT$ as in that lemma, the fact that for normal subgroups the Weyl-type ``disintegration'' formula
  \begin{equation*}
    \varphi_{\bG/\bH}\circ \cT = \varphi_{\bG}
  \end{equation*}
  holds (up to scaling) by \cite[Proposition 4.10]{chk}.
\end{proof}

In particular, \prettyref{cor:g-so-h-unim} implies:

\begin{cor}\label{cor:norm-unim}
  Normal closed quantum subgroups of unimodular LCQGs are again unimodular.
\end{cor}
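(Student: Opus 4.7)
The plan is essentially to observe that this corollary follows immediately by chaining together the two preceding corollaries in the section. Recall that normality of a closed quantum subgroup is defined only in the Vaes sense, so a normal $\bH \trianglelefteq \bG$ is in particular a closed quantum subgroup in the sense of Vaes, which is the hypothesis required for \prettyref{cor:g-so-h-unim}.

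First I would invoke \prettyref{cor:norm-com-inv}: the normality hypothesis guarantees that the left action of $\bG$ on $\Linfty{\bG/\bH}$ admits a completely invariant n.s.f.~weight (concretely, the left Haar weight of the quotient LCQG $\bG/\bH$, via the Weyl-type disintegration formula $\varphi_{\bG/\bH}\circ\cT=\varphi_\bG$ up to scaling). This places us exactly in the setting of \prettyref{cor:g-so-h-unim}, which then asserts that unimodularity of $\bG$ transfers to $\bH$, yielding the desired conclusion.

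There is really no obstacle here: all the substantive content has been established in \prettyref{thm:quot_inv_nsf_weight} (where the equivalence of conditions \prettyref{enu:inv1}--\prettyref{enu:inv3} gives $\a_r(\delta_\bG^{it}) = \delta_\bG^{it}\tensor\delta_\bH^{it}$) together with the fact, cited from \cite{chk}, that the Haar weight of the quotient produces a completely invariant weight under normality. Given these, the corollary is a one-line deduction, and I would state the proof simply as ``Combine \prettyref{cor:norm-com-inv} with \prettyref{cor:g-so-h-unim}.''
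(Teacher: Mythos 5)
Your proof is correct and is exactly the paper's own argument: the paper states \prettyref{cor:norm-unim} as an immediate consequence of \prettyref{cor:norm-com-inv} combined with \prettyref{cor:g-so-h-unim}, precisely as you propose. Your remark that normality presupposes the Vaes sense, so the hypothesis of \prettyref{cor:g-so-h-unim} is satisfied, is a correct (if implicit in the paper) detail.
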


\section{The canonical implementation and invariant normal states}\label{sec:canon-impl-invar}

The next result extends \citep[Proposition 4.11 (b)]{Daws_Skalski_Viselter__prop_T} from discrete to locally compact quantum groups. Our proof strategy is very different.
\begin{thm}\label{thm:inv-can}
  \label{thm:inv_norm_pos_func}Let $\a$ be an action of a LCQG $\G$ on a von Neumann algebra $N$. Let $\rho\in N_{*}^{+}$.  Then $\rho$ is invariant under $\a$ if and only if the unique vector $\z$ in the positive cone $\Ltwo N_{+}$ such that $\rho=\om_{\z}$ is invariant under the canonical unitary implementation \cite{Vaes__unit_impl_LCQG} of $\a$.
\end{thm}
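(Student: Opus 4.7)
Plan:

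The ``$\Leftarrow$'' direction is immediate. Assuming $U(\eta\tensor\zeta)=\eta\tensor\zeta$ for every $\eta\in\Ltwo{\bG}$, unitarity gives the same for $U^*$, and the implementing relation $\alpha(x)=U(\one\tensor x)U^*$ yields
\begin{equation*}
(\omega_\eta\tensor\rho)(\alpha(x)) \;=\; \bigl\langle (\one\tensor x)U^*(\eta\tensor\zeta),\, U^*(\eta\tensor\zeta)\bigr\rangle \;=\; \langle (\one\tensor x)(\eta\tensor\zeta),\eta\tensor\zeta\rangle \;=\; \|\eta\|^2\rho(x),
\end{equation*}
and the density of the vector functionals $\omega_\eta$ in $\Lone{\bG}_+$ gives $(\i\tensor\rho)\circ\a=\rho(\cdot)\one$.

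For the ``$\Rightarrow$'' direction I plan to first handle the case when $\rho$ is faithful, so that $\rho$ is a finite, faithful, \emph{invariant} n.s.f.\ weight on $N$. Here Vaes's explicit GNS-level description of the canonical implementation with respect to an invariant n.s.f.\ weight \cite{Vaes__unit_impl_LCQG} applies, using $\rho$ itself as the auxiliary weight:
\begin{equation*}
U^*\bigl(\gnsmap_{\varphi_\bG}(a)\tensor\gnsmap_\rho(y)\bigr) \;=\; (\gnsmap_{\varphi_\bG}\tensor\gnsmap_\rho)\bigl(\a(y)(a\tensor\one)\bigr) \qquad (a\in\mathcal{N}_{\varphi_\bG},\, y\in\mathcal{N}_\rho).
\end{equation*}
Since $\rho$ is finite one has $\one_N\in\mathcal{N}_\rho$, and $\gnsmap_\rho(\one_N)$ is precisely the positive cone representative $\zeta$ of $\rho$ (it is fixed by the modular conjugation $J_\rho$). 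Specialising $y=\one_N$ gives $U^*(\gnsmap_{\varphi_\bG}(a)\tensor\zeta)=\gnsmap_{\varphi_\bG}(a)\tensor\zeta$; density and unitarity then yield $U(\eta\tensor\zeta)=\eta\tensor\zeta$ for all $\eta\in\Ltwo{\bG}$.

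For a general $\rho\in N_*^+$, I plan to reduce to the faithful case via the support projection $p\in N$ of $\rho$. A slicing argument using $\rho((\omega\tensor\i)\a(\one-p))=\omega(\one)\rho(\one-p)=0$ for all $\omega\in\Lone{\bG}_+$, combined with the characterisation of supports of normal states, gives $\a(\one-p)\in\Linfty{\bG}\tensorn (\one-p)N(\one-p)$, hence $\a(p)\ge\one\tensor p$. Upgrading this to equality $\a(p)=\one\tensor p$ uses the coassociativity $(\Delta\tensor\i)\a(p)=(\i\tensor\a)\a(p)$ applied to the projection $q:=\a(p)-\one\tensor p\in\Linfty{\bG}\tensorn(\one-p)N(\one-p)$; the resulting cocycle-type identity $(\Delta\tensor\i)(q)=\one\tensor q+(\i\tensor\a)(q)$, together with $q\le\one\tensor(\one-p)$ and structural features of $\bG$ (universal counit $\epsilon$ or antipode $R$), should force $q=0$. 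Once $\a(p)=\one\tensor p$ is established, $\a$ restricts to a left action of $\bG$ on $pNp$ leaving $\rho|_{pNp}$ (faithful, finite) invariant, the vector $\zeta$ belongs to $p\,\Ltwo{N}\,JpJ\cong\Ltwo{pNp}$ in the positive cone, and the canonical implementation of $\a|_{pNp}$ is the natural compression of $U$; applying the faithful case then finishes the argument.

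The main technical obstacle will be upgrading $\a(p)\ge\one\tensor p$ to equality: classically this is immediate from the group cocycle identity $r(g,h)=r(h)+\alpha_h(r(g))$ specialized at $h=g^{-1}$, but inverses are not pointwise in the quantum setting and one must work with the antipode and/or counit at the universal level. An attractive alternative, which would bypass the case distinction entirely, is to prove the ``$\Rightarrow$'' direction directly from Vaes's characterisation of $U$ via the dual weight $\widetilde{\psi}$ on the crossed product $N\rtimes_\a\bG$: identifying $\zeta$ with a canonical vector built from $\rho$ in the standard form of $N\rtimes_\a\bG$ may allow a uniform argument handling the faithful and non-faithful cases simultaneously, at the cost of deeper use of the modular theory of dual weights.
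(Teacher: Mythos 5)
Your plan follows the same two\-/stage strategy as the paper (handle a faithful invariant normal state via the explicit formula for the canonical implementation, then reduce the general case by cutting down to the support projection $p$ of $\rho$), and both the easy direction and the faithful case are fine. But the two steps you leave unproved in the reduction are exactly where the substance of the paper's argument lies, and as sketched both are genuine gaps. First, upgrading $\a(p)\ge\one\tensor p$ to $\a(p)=\one\tensor p$: your proposed use of the counit or the antipode on the identity $(\Delta\tensor\i)(q)=\one\tensor q+(\i\tensor\a)(q)$ does not work as stated, because for a general LCQG the counit lives only on $\CzU{\G}$ and is neither bounded nor normal on $\li{\G}$ (and the antipode is unbounded), so neither can be applied to $q\in\li{\G}\tensorn N$. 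The paper isolates this step as \prettyref{lem:act_fixed_point} and proves it by adapting \cite[Lemma 3.1]{Kasprzak_Khosravi_Soltan__int_act_QGs}: one first shows $(\i\tensor\Delta)(\a(p))=\a(p)\tensor\one$ (in the right-action convention) and then invokes the action identity together with the injectivity of $\a\tensor\i$. Some argument of this calibre is unavoidable.

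The second and more serious gap is the bare assertion that ``the canonical implementation of $\a|_{pNp}$ is the natural compression of $U$''. The canonical implementation is defined through the modular theory of the dual weight on the crossed product, $U_\a=(J_{\widehat{\varphi}}\tensor J_\theta)J_{\widetilde{\theta}}$, so its compatibility with compression by $\a(p)=\one\tensor p$ is not automatic; establishing it is the technical core of the paper's proof. There one takes $\theta=\rho+\rho'$ with $\rho'$ a normal semi-finite weight of support $\one-p$, checks that $\one\tensor p$ lies in the centralizer of $\widetilde{\theta}$, proves a separate lemma (\prettyref{lem:red_weight_GNS}) showing that $\overline{\gnsmap_\theta(p\cN_\theta p)}$ reduces $J_\theta$ and $\nabla_\theta$ with restrictions equal to the modular data of the reduced weight, identifies the compressed crossed product with the crossed product of the reduced action and $(\widetilde{\theta})_q$ with the dual weight of $\rho_p$, and only then concludes that $\Ltwo{\G}\tensor\overline{\gnsmap_\theta(pNp)}$ reduces $U_\a$ with restriction the canonical implementation $U_{\a_p}$. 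Without an argument of this sort your reduction to the faithful case does not close. (Your suggested alternative of working directly with the dual weight is in fact what the paper does, but in combination with, not in place of, the support-projection reduction.)
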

In the following proofs we will work with left actions for convenience.
\begin{lem}\label{lem:act_fixed_point}
	Let $\a$ be an action of a LCQG $\G$ on a von Neumann algebra $N$.
	If $p\in N$ is a projection such that either $\a(p)\ge\one\tensor p$
	or $\a(p)\le\one\tensor p$, then $p$ is a fixed point of $\a$, that is,
	$\a(p)=\one\tensor p$.
\end{lem}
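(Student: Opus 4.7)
The plan is to reduce to the case $\a(p)\le\one\tensor p$; the other case follows by replacing $p$ with $\one-p$ and invoking $\a(\one-p)=\one-\a(p)$, which converts $\a(p)\ge\one\tensor p$ into $\a(\one-p)\le\one\tensor(\one-p)$.

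First I would observe that $r:=\one\tensor p-\a(p)\ge 0$ is itself a projection in $\li{\G}\tensorn N$. Since $\a$ is a $*$-homomorphism, $\a(p)$ is a projection; the inequality $\a(p)\le\one\tensor p$ between two projections then forces $(\one\tensor p)\a(p)=\a(p)(\one\tensor p)=\a(p)$, and a direct expansion yields $r^2=\one\tensor p-2\a(p)+\a(p)=r$.

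The next step is to exploit the action identity $(\i\tensor\a)\a=(\Delta\tensor\i)\a$ to derive the three-leg identity
\[
(\Delta\tensor\i)(r) \;=\; \one\tensor r + (\i\tensor\a)(r) \qquad\text{in }\li{\G}\tensorn\li{\G}\tensorn N,
\]
by applying $(\i\tensor\a)$ and $(\Delta\tensor\i)$ to the definition of $r$ and comparing. The left side is a projection (as $\Delta\tensor\i$ is a normal unital $*$-homomorphism), while the right side is a sum of two projections, so the two summands must be mutually orthogonal: $(\one\tensor r)(\i\tensor\a)(r)=0$. This is already a strong constraint.

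The main obstacle will be converting this orthogonality into $r=0$. My approach would be to pass to the canonical unitary implementation $U\in\li{\G}\tensorn B(\Ltwo N)$ of $\a$ from Vaes, for which $\a(x)=U(\one\tensor x)U^*$ with $N$ in standard form on $\Ltwo N$. The hypothesis $\a(p)\le\one\tensor p$ then reads $(\one\tensor p^\perp)U(\one\tensor p)=0$, meaning that $U$ is block upper-triangular with respect to the decomposition $p\Ltwo N\oplus p^\perp\Ltwo N$; write $A:=(\one\tensor p)U(\one\tensor p)$ for the $(p,p)$-block. Unitarity of $U$ forces $A^*A=\one_{p\Ltwo N}$, and projecting the representation identity $(\Delta\tensor\i)(U)=U_{13}U_{23}$ onto the $(p,p)$-corner yields the cocycle $(\Delta\tensor\i)(A)=A_{13}A_{23}$, so that $A$ is an isometric corepresentation of $\G$ on $p\Ltwo N$. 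A general representation-theoretic fact about LCQGs, provable via the unitary antipode $R$ (which plays the role of the group inverse and converts isometries satisfying the cocycle into their own adjoints), ensures that such an isometric corepresentation is automatically unitary. Hence $AA^*=\one_{p\Ltwo N}$, which combined with $AA^*+BB^*=\one_{p\Ltwo N}$ forces the off-diagonal block $B$ to vanish. Then $U$ commutes with $\one\tensor p$, giving $\a(p)=U(\one\tensor p)U^*=\one\tensor p$.
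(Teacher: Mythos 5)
Your argument is correct in outline but takes a genuinely different route from the paper's. The paper reduces (via the right-action convention) to the statement $(\i\tensor\Delta)(\a(p))=\a(p)\tensor\one$, imported from Kasprzak--Khosravi--So\l tan, and then concludes by injectivity of $\a\tensor\i$; no spatial implementation is involved. You instead pass to Vaes's canonical unitary implementation $U$ and observe that $\a(p)\le\one\tensor p$ makes $\Ltwo{N}\tensor$-corner of $U$ triangular, so everything hinges on the fact that an isometric corner of a unitary corepresentation is unitary. That fact is true and is precisely \cite[Corollary 4.16]{Brannan_Daws_Samei__cb_rep_of_conv_alg_of_LCQGs}, which this paper already invokes right after the definition of ``preserved'' subspaces; with that citation in place your proof closes cleanly ($BB^*=0$, so $U$ commutes with $\one\tensor p$). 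Two caveats. First, your proposed justification of that key fact ``via the unitary antipode $R$'' is not adequate in general: $(R\tensor\i)(U)=U^*$ only holds in the Kac case, and for general LCQGs one needs the unbounded antipode $S=R\tau_{-i/2}$, which is why the actual argument in Brannan--Daws--Samei is substantially harder; this step should be a citation, not a sketch. Second, your opening computation with the projection $r=\one\tensor p-\a(p)$ and the orthogonality $(\one\tensor r)(\i\tensor\a)(r)=0$ is correct (and is morally the germ of the cited KKS proof), but as written it is abandoned and plays no role in your final argument.
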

\begin{proof}
	This is proved in \citep[Lemma 3.1 and Remark 3.2]{Kasprzak_Khosravi_Soltan__int_act_QGs}
	in the case that $\a$ is \emph{ergodic}. The proof in the general
	case is just the same, removing all references to ergodicity and changing
	the proof's last line appropriately, as we now explain. For convenience,
	we use the convention of \citep{Kasprzak_Khosravi_Soltan__int_act_QGs},
	so $\a$ is a right action and $\a(p)\le p\tensor\one$. Write $q:=\a(p)\in N\tensorn\Linfty{\G}$.
	The proof of \citep[Lemma 3.1]{Kasprzak_Khosravi_Soltan__int_act_QGs}
	shows, without using ergodicity, that $(\i\tensor\Delta)(q)=q\tensor\one$ (see p.~3234 line 4 therein).
	So by the defining property of right actions, 
	\[
	(\a\tensor\i)(\a(p))=(\i\tensor\Delta)(\a(p))=\a(p)\tensor\one=(\a\tensor\i)(p\tensor\one).
	\]
	Injectivity of $\a\tensor\i$ now implies that $\a(p)=p\tensor\one$.
\end{proof}

\begin{lem}
  \label{lem:inv_norm_pos_func_supp}Let $\a$ be an action of a LCQG $\G$ on a von Neumann algebra $N$. If $\rho\in N_{*}^{+}$ is invariant under $\a$, then $p:=\supp(\rho)$ is a fixed point of $\a$.
\end{lem}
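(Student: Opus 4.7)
My plan is to reduce the statement to \prettyref{lem:act_fixed_point} by verifying the one-sided inequality $\a(p)\ge\one\tensor p$. Set $p:=\supp(\rho)$ and $q:=\a(p)$; since $\a$ is a unital normal $*$-homomorphism, $q$ is a projection in $\li{\bG}\tensorn N$ and $\one-q=\a(\one-p)$. Applying the invariance condition to $\one-p$ immediately gives
\[
(\i\tensor\rho)(\one-q)=\rho(\one-p)\one=0.
\]

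Now I would slice by arbitrary $\om\in\li{\bG}_*^+$: each such slice yields $(\om\tensor\rho)(\one-q)=\om\bigl((\i\tensor\rho)(\one-q)\bigr)=0$. Since $\one-q$ is a projection and $\om\tensor\rho$ is a positive normal functional on $\li{\bG}\tensorn N$, this vanishing forces its support to lie under $q$. Using the standard identity $\supp(\om\tensor\rho)=\supp(\om)\tensor\supp(\rho)=\supp(\om)\tensor p$, this reads $\supp(\om)\tensor p\le q$. The supports of positive normal functionals on $\li{\bG}$ have supremum $\one$ (normal positive functionals separate points), and normal tensor products preserve suprema in each argument, so taking the supremum over $\om$ gives $\one\tensor p\le q=\a(p)$. \prettyref{lem:act_fixed_point} then upgrades this inequality to the desired equality $\a(p)=\one\tensor p$.

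I do not anticipate serious obstacles: the argument is structurally the same as in the classical case, and the two inputs beyond \prettyref{lem:act_fixed_point} (the support identity for tensor products of normal positive functionals, and the fact that $\sup_{\om}\supp(\om)=\one$) are routine von Neumann algebraic facts. The only point that requires a moment's care is the invocation of normality of $\otimes$ in each variable when passing to the supremum, but this is a standard property of the normal tensor product of von Neumann algebras.
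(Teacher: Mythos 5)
Your proof is correct, and it shares the paper's overall strategy: establish the one-sided inequality $\a(p)\ge\one\tensor p$ and then invoke \prettyref{lem:act_fixed_point} to upgrade it to equality. The only genuine difference lies in how that inequality is obtained. The paper applies invariance to $p$ itself, deduces that the positive element $\one\tensor p-(\one\tensor p)\a(p)(\one\tensor p)$ is annihilated by $\i\tensor\rho$, uses faithfulness of $\rho$ on $pNp$ (via slicing by states of $\li{\bG}$) to conclude that this element vanishes, and then converts the resulting compression identity $(\one\tensor p)\a(p)(\one\tensor p)=\one\tensor p$ into $\a(p)\ge\one\tensor p$ by the elementary observation that $eae=e$ is equivalent to $a\ge e$ when $0\le a\le\one$ and $e$ is a projection. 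You instead apply invariance to $\one-p$, pass to the support projections of the functionals $\om\tensor\rho$, and take a supremum over $\om$. Both routes rest on routine von Neumann algebra facts; yours trades the compression identity for the support formula $\supp(\om\tensor\rho)=\supp(\om)\tensor\supp(\rho)$ together with the fact that $x\mapsto x\tensor p$ is a normal injective $*$-homomorphism (which is what justifies commuting the supremum past the tensor), and these are indeed standard. Neither version is materially simpler, and your reduction to \prettyref{lem:act_fixed_point} is exactly where the paper places the real content as well.
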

\begin{proof}
  Note that if $a,e$ are elements of a C$^{*}$-algebra with $0\le a$, $\left\Vert a\right\Vert \le1$ and $e$ being a projection, then $eae=e$ if and only if $a\ge e$, because working in some unitization, $eae=e$ $\iff$ $e(\one-a)e=0$ $\iff$ $\left(\one-a\right)^{1/2}e=0$ $\iff$ $\left(\one-a\right)e=0$ $\iff$ $a\ge a^{1/2}ea^{1/2}=e$ (using the commutation of $a,e$).

We can assume that $\rho$ is a state. From the $\a$-invariance of
$\rho$ we get 
\begin{equation}
(\i\tensor\rho)(\a(p))=\rho(p)\one=(\i\tensor\rho)(\one\tensor p).\label{eq:inv_norm_pos_func_supp}
\end{equation}
 If the positive element $\one\tensor p-(\one\tensor p)\a(p)(\one\tensor p)$
were non-zero, there would exist $\om\in N_{*}^{+}$ such that $(\om\tensor\i)\left(\one\tensor p-(\one\tensor p)\a(p)(\one\tensor p)\right)$
is a non-zero element of $pNp$, thus $(\om\tensor\rho)\left(\one\tensor p-(\one\tensor p)\a(p)(\one\tensor p)\right)\neq0$,
contradicting \eqref{eq:inv_norm_pos_func_supp}. Thus $(\one\tensor p)\a(p)(\one\tensor p)=\one\tensor p$,
so that $\a(p)\ge\one\tensor p$ by the preceding paragraph. This
entails that $\a(p)=\one\tensor p$ by \prettyref{lem:act_fixed_point}.
\end{proof}
The following lemma summarizes a few well-known facts from modular
theory. 
\begin{lem}
\label{lem:red_weight_GNS}Let $N$ be a von Neumann algebra, $\theta$
be an n.s.f.~weight on $N$ and $p$ be a projection in the centralizer
of $\theta$. Then on the reduced von Neumann algebra $pNp$ we have
the reduced (n.s.f.)~weight $\theta_{p}:=\theta|_{pNp}$. We have
$p\mathcal{N}_{\theta}p=\mathcal{N}_{\theta}\cap pNp=\mathcal{N}_{\theta_{p}}$,
the GNS Hilbert space $\Ltwo{pNp,\theta_{p}}$ naturally identifies
with the subspace $\overline{\gnsmap_{\theta}(p\mathcal{N}_{\theta}p)}$
of $\Ltwo{N,\theta}$, and upon making this identification, the restriction
$\gnsmap_{\theta}|_{p\mathcal{N}_{\theta}p}$ equals $\gnsmap_{\theta_{p}}$
. Furthermore, the subspace $\overline{\gnsmap_{\theta}(p\mathcal{N}_{\theta}p)}$
of $\Ltwo{N,\theta}$ is reducing for $T_{\theta}$, and the part
of $T_{\theta}$ on this subspace is precisely $T_{\theta_{p}}.$
\end{lem}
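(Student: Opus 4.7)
The plan is to verify the claims in the order they are stated, relying throughout on the fact that $p$ being in the centralizer of $\theta$ means $\sigma^\theta_t(p)=p$ for all $t\in\R$, which extends to $\sigma^\theta_z(p)=p$ for all $z\in\C$ since $p$ is bounded and fixed by the real flow. Equivalently, $p$ commutes with $\nabla_\theta^{it}$ for every real $t$, and hence with $\nabla_\theta^{1/2}$ as unbounded operators.

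First I would handle the identity $p\mathcal{N}_\theta p=\mathcal{N}_\theta\cap pNp=\mathcal{N}_{\theta_p}$. The equality $\mathcal{N}_\theta\cap pNp=\mathcal{N}_{\theta_p}$ is immediate from the definition of $\theta_p=\theta|_{pNp}$. For the other equality, the inclusion $p\mathcal{N}_\theta p\subseteq \mathcal{N}_\theta\cap pNp$ follows because $\mathcal{N}_\theta$ is a left ideal (hence $x\in\mathcal{N}_\theta$ gives $px\in\mathcal{N}_\theta$) and because right multiplication by $p$ preserves $\mathcal{N}_\theta$: if $x\in\mathcal{N}_\theta$, the centralizer property yields $\theta(px^*xp)=\theta(x^*xp)\le\theta(x^*x)<\infty$, so $xp\in\mathcal{N}_\theta$. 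The reverse inclusion is trivial since $y\in\mathcal{N}_\theta\cap pNp$ satisfies $y=pyp$.

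Second, for the Hilbert-space identification, I would define the map $\eta_{\theta_p}(y)\mapsto \eta_\theta(y)$ for $y\in\mathcal{N}_{\theta_p}=p\mathcal{N}_\theta p$ and check it is isometric (both inner products are $\theta(z^*y)$), hence extends to an isometry from $L^2(pNp,\theta_p)$ onto $\overline{\eta_\theta(p\mathcal{N}_\theta p)}$. The statement $\eta_\theta|_{p\mathcal{N}_\theta p}=\eta_{\theta_p}$ after this identification is then tautological.

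Third and most substantive, I would show that the subspace $K:=\overline{\eta_\theta(p\mathcal{N}_\theta p)}$ reduces $T_\theta=J_\theta\nabla_\theta^{1/2}$. The strategy is to identify the orthogonal projection $P$ onto $K$ explicitly as $P=\pi_\theta(p)\,J_\theta\pi_\theta(p)J_\theta$. This uses the standard modular-theoretic identity $J_\theta p J_\theta\,\eta_\theta(x)=\eta_\theta(xp)$ for $x\in\mathcal{N}_\theta$, valid because $p$ is fixed by $\sigma^\theta_{-i/2}$; extending from the analytic Tomita algebra by continuity then gives, for $x\in\mathcal{N}_\theta$,
\[
P\eta_\theta(x)=\pi_\theta(p)J_\theta p J_\theta\,\eta_\theta(x)=\pi_\theta(p)\eta_\theta(xp)=\eta_\theta(pxp),
\]
so $P$ maps $\eta_\theta(\mathcal{N}_\theta)$ (which is dense in $L^2(N,\theta)$) onto the dense subspace $\eta_\theta(p\mathcal{N}_\theta p)$ of $K$. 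Then I would argue that $P$ commutes with both $J_\theta$ and $\nabla_\theta^{1/2}$: the former because $J_\theta P J_\theta = J_\theta pJ_\theta \cdot p \cdot J_\theta \cdot J_\theta = (J_\theta pJ_\theta)p = P$ (using that $p$ and $J_\theta pJ_\theta\in N'$ commute), and the latter because both $p$ and $J_\theta pJ_\theta$ commute with $\nabla_\theta^{it}$ for all $t\in\R$ (for $J_\theta pJ_\theta$ use $\nabla_\theta^{1/2}J_\theta=J_\theta\nabla_\theta^{-1/2}$ combined with $p\nabla_\theta^{1/2}\subseteq\nabla_\theta^{1/2}p$), hence with $\nabla_\theta^{1/2}$ in the unbounded sense. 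Therefore $P$ commutes with $T_\theta$, which is exactly the reducing property.

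Finally, for the identification of the part of $T_\theta$ on $K$ with $T_{\theta_p}$: the restriction $T_\theta|_K$ is closed (as $P$ commutes with $T_\theta$), and on the dense subspace $\eta_\theta(p\mathcal{N}_\theta p\cap p\mathcal{N}_\theta^* p)=\eta_{\theta_p}(\mathcal{N}_{\theta_p}\cap\mathcal{N}_{\theta_p}^*)$ it acts as $\eta_\theta(y)\mapsto\eta_\theta(y^*)$, which coincides with $T_{\theta_p}$ on its standard core. By uniqueness of the closure, $T_\theta|_K=T_{\theta_p}$. The main technical point in the whole proof is the formula $J_\theta pJ_\theta\,\eta_\theta(x)=\eta_\theta(xp)$; everything else is routine modular-theoretic bookkeeping once this is granted.
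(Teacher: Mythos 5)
Your proof is correct in outline but follows a genuinely different route from the paper's for the main part of the statement. The paper first shows that $pNp$ is $\sigma^{\theta}$-invariant and that $\sigma^{\theta}|_{pNp}=\sigma^{\theta_{p}}$ (via the KMS/uniqueness characterization of the modular group), then reads off directly on GNS vectors that $\overline{\gnsmap_{\theta}(p\mathcal{N}_{\theta}p)}$ reduces each $\nabla_{\theta}^{it}$ with restriction $\nabla_{\theta_{p}}^{it}$, hence reduces $\nabla_{\theta}^{1/2}$, and finally that $J_{\theta}$ restricts to $J_{\theta_{p}}$. You instead exhibit the orthogonal projection onto that subspace explicitly as $P=pJ_{\theta}pJ_{\theta}$, using the right-multiplication formula $J_{\theta}pJ_{\theta}\gnsmap_{\theta}(x)=\gnsmap_{\theta}(xp)$, and check that $P$ commutes with $J_{\theta}$ and with all $\nabla_{\theta}^{it}$, hence with $T_{\theta}$. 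Your route avoids invoking the modular automorphism uniqueness theorem and gives a concrete formula for the projection, at the cost of relying on the (standard, but nontrivial) identity $\gnsmap_{\theta}(xp)=J_{\theta}pJ_{\theta}\gnsmap_{\theta}(x)$, which itself encodes that $p$ is analytic and fixed by the flow. The paper's route has the advantage that the identification of the restrictions of $\nabla_{\theta}^{it}$ and $J_{\theta}$ with $\nabla_{\theta_{p}}^{it}$ and $J_{\theta_{p}}$ falls out immediately, whereas you recover it only a posteriori from the polar decomposition of $T_{\theta}|_{K}=T_{\theta_{p}}$.

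Two points need repair. First, your justification that $xp\in\mathcal{N}_{\theta}$ is flawed as written: the step $\theta(px^{*}xp)=\theta(x^{*}xp)$ applies the centralizer trace property to an element not yet known to lie in $\mathcal{M}_{\theta}$, and the inequality $\theta(x^{*}xp)\le\theta(x^{*}x)$ does not follow from positivity, since $px^{*}xp\le x^{*}x$ is false in general. The conclusion is nonetheless true: either argue as the paper does, that $\mathcal{N}_{\theta}$ is a right module over $\sigma^{\theta}$-entire analytic elements and $p$ is such an element, or use that averaging over $\mathrm{Ad}(2p-\one)$ preserves $\theta$, whence $\theta(pap)+\theta((\one-p)a(\one-p))=\theta(a)$ for $a\ge0$. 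Second, your final appeal to ``uniqueness of the closure'' only gives $T_{\theta_{p}}\subseteq T_{\theta}|_{K}$ directly; for equality you should note that $PT_{\theta}\subseteq T_{\theta}P$ implies that $\gnsmap_{\theta}(p(\mathcal{N}_{\theta}\cap\mathcal{N}_{\theta}^{*})p)$ is a core for $T_{\theta}|_{K}$ (push a graph-norm approximating sequence for $\xi\in D(T_{\theta})\cap K$ through $P$). Both fixes are routine.
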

\begin{proof}
	The reduced weight $\theta_{p}$ is clearly normal and faithful and $\mathcal{N}_{\theta}\cap pNp=\mathcal{N}_{\theta_{p}}$.
	Recall that $\mathcal{N}_{\theta}$ and $\mathcal{M}_{\theta}$ are
	bimodules over the algebra of $\sigma^{\theta}$-entire analytic elements.
	Since $p$ is in the centralizer of $\theta$, it is entire analytic,
	hence $\theta_{p}$ is semi-finite and $p\mathcal{N}_{\theta}p=\mathcal{N}_{\theta}\cap pNp=\mathcal{N}_{\theta_{p}}$.
	Furthermore, $pNp$ is invariant under $\sigma^{\theta}$, and the
	restriction of $\sigma^{\theta}$ to $pNp$ is precisely $\sigma^{\theta_{p}}$
	by the modular automorphism group uniqueness theorem, because $\theta_{p}$
	is $\sigma^{\theta}|_{pNp}$-invariant and $\theta_{p}$ satisfies
	the KMS-condition with respect to $\sigma^{\theta}|_{pNp}$ (this
	argument is given in \citep[Proof of Lemme 3.2.6]{Connes__classification_des_facteurs}
	for when $N$ is a factor, but this condition is not required).
	
	From this point one proceeds like in the relevant part of the proof
	of Takesaki's conditional expectation theorem (see, e.g., \citep[10.2, pp.~130--131]{Stratila__mod_thy}).
	The closed subspace $\overline{\gnsmap_{\theta}(p\mathcal{N}_{\theta}p)}$
	of $L^{2}(N,\theta)$ is invariant under $pNp$, and $(\overline{\gnsmap_{\theta}(p\mathcal{N}_{\theta}p)},\gnsmap_{\theta}|_{p\mathcal{N}_{\theta}p})$
	together with the identity representation is clearly a GNS construction
	for $(pNp,\theta_{p})$. Thus we can and will identify $L^{2}(pNp,\theta_{p})$
	with $\overline{\gnsmap_{\theta}(p\mathcal{N}_{\theta}p)}$ and $\gnsmap_{\theta_{p}}$
	with $\gnsmap_{\theta}|_{p\mathcal{N}_{\theta}p}$. For all $t\in\R$
	and $x\in\mathcal{N}_{\theta_{p}}=p\mathcal{N}_{\theta}p$,
	\[
	\nabla_{\theta_{p}}^{it}\gnsmap_{\theta_{p}}(x)=\gnsmap_{\theta_{p}}(\sigma_{t}^{\theta_{p}}(x))=\gnsmap_{\theta}(\sigma_{t}^{\theta}(x))=\nabla_{\theta}^{it}\gnsmap_{\theta}(x)=\nabla_{\theta}^{it}\gnsmap_{\theta_{p}}(x).
	\]
	That is to say, for each $t\in\R$, $\overline{\gnsmap_{\theta}(p\mathcal{N}_{\theta}p)}$
	is reducing for $\nabla_{\theta}^{it}$ and the restriction equals
	$\nabla_{\theta_{p}}^{it}$. Equivalently, $\overline{\gnsmap_{\theta}(p\mathcal{N}_{\theta}p)}$
	is reducing for $\nabla_{\theta}^{1/2}$ and the restriction equals
	$\nabla_{\theta_{p}}^{1/2}$. Now, for $x\in\mathcal{N}_{\theta_{p}}\cap\mathcal{N}_{\theta_{p}}^{*}$
	\[
	T_{\theta_{p}}\gnsmap_{\theta_{p}}(x)=\gnsmap_{\theta_{p}}(x^{*})=\gnsmap_{\theta}(x^{*})=T_{\theta}\gnsmap_{\theta}(x),
	\]
	that is,
	\[
	J_{\theta_{p}}\nabla_{\theta_{p}}^{1/2}\gnsmap_{\theta_{p}}(x)=J_{\theta}\nabla_{\theta}^{1/2}\gnsmap_{\theta}(x)=J_{\theta}\nabla_{\theta_{p}}^{1/2}\gnsmap_{\theta_{p}}(x).
	\]
	So $\overline{\gnsmap_{\theta}(p\mathcal{N}_{\theta}p)}$ is also
	reducing for $J_{\theta}$ and the restriction equals $J_{\theta_{p}}$.
	All in all, $\overline{\gnsmap_{\theta}(p\mathcal{N}_{\theta}p)}$
	is reducing for $T_{\theta}$ and the restriction equals $T_{\theta_{p}}$.
\end{proof}

\begin{proof}[Proof of \prettyref{thm:inv_norm_pos_func}]
Denote the canonical unitary implementation of $\a$ by $U_{\a}$.
Sufficiency is clear: if $\z\in\Ltwo N$ is invariant under $U_{\a}$,
then $\om_{\z}\in N_{*}$ is invariant under $\a$.

Necessity: assume that a state $\rho\in N_{*}$ is invariant under
$\a$. Write $p$ for the support of $\rho$, let $\rho'$ be a normal
semi-finite weight on $N$ whose support is $\one-p$, and set $\theta:=\rho+\rho'$.
Then $\theta$ is an n.s.f.~weight on $N$. We should prove that
the unit vector $\gnsmap_{\theta}(p)$, which is the (unique) element
$\z$ of $\Ltwo{N,\theta}_{+}$ such that $\rho=\om_{\z}$, is invariant
under $U_{\a}$. The idea is to reduce the problem to the case $p=\one$.

Let $\widetilde{\theta}$ be the n.s.f.~weight on $\G\pres{}{\a}{\ltimes}N$
that is dual to $\theta$ \citep[Definition 3.1]{Vaes__unit_impl_LCQG}.
We use its GNS construction afforded by \citep[Definition 3.4 and Proposition 3.10]{Vaes__unit_impl_LCQG},
so the GNS Hilbert space into which $\gnsmap_{\widetilde{\theta}}$
maps is $\Ltwo{\G}\tensor\Ltwo{N,\theta}$, and we have 
\begin{equation}
\left\{ (a\tensor\one)\a(x):a\in\mathcal{N}_{\widehat{\varphi}},x\in\mathcal{N}_{\theta}\right\} \text{ is a }\text{\ensuremath{*}-ultrastrong\textendash norm core of }\gnsmap_{\widetilde{\theta}}\label{eq:inv_norm_pos_func__gns_theta_tilde__1}
\end{equation}
and
\begin{equation}
\gnsmap_{\widetilde{\theta}}\left((a\tensor\one)\a(x)\right)=\gnsmap_{\widehat{\varphi}}(a)\tensor\gnsmap_{\theta}(x)\qquad(\forall a\in\mathcal{N}_{\widehat{\varphi}},x\in\mathcal{N}_{\theta}).\label{eq:inv_norm_pos_func__gns_theta_tilde__2}
\end{equation}
Note that $\one\otimes p=\alpha(p)\in\G\pres{}{\a}{\ltimes}N$ belongs to the centralizer of $\widetilde{\theta}$, because $p$ belongs to the centralizer of $\theta$, i.e., $\left(\sigma_{t}^{\theta}\right)_{t\in\R}$ fixes $p$, and $\sigma_{t}^{\widetilde{\theta}}\circ\alpha=\alpha\circ\sigma_{t}^{\theta}$ for all $t\in\R$ by \citep[Proposition 3.7]{Vaes__unit_impl_LCQG}.

We will use the description of $U_{\a}$ as $(J_{\widehat{\varphi}}\otimes J_{\theta})J_{\widetilde{\theta}}$ (recall that up to unitary equivalence, $U_{\a}$ does not depend on the chosen n.s.f.~weight \citep[Proposition 4.1]{Vaes__unit_impl_LCQG}).  We have to show that
\begin{equation}
  U_{\a}(\xi\otimes\gnsmap_{\theta}(p))=\xi\otimes\gnsmap_{\theta}(p)\qquad(\forall\xi\in L^{2}(\G)).\label{eq:inv_norm_pos_func__canon_unit_impl}
\end{equation}

Since $p$ belongs to the centralizer of $\theta$ and $q:=\alpha(p)=\one\otimes p$ belongs to the centralizer of $\widetilde{\theta}$, we can apply \prettyref{lem:red_weight_GNS} to these cases. Note that the reduced weight $\theta_{p}$ is the faithful normal state $\rho_{p}$. Denoting by $\a_{p}:=\a|_{pNp}:pNp\to\Linfty{\G}\tensorn pNp$ the reduced action of $\G$ on $pNp$, the reader can check using \citep[Lemma 3.3]{Vaes__unit_impl_LCQG} that $q(\G\pres{}{\a}{\ltimes}N)q=\G\pres{}{\a_{p}}{\ltimes}pNp$ and $(\widetilde{\theta})_{q}=\widetilde{(\theta_{p})}=\widetilde{(\rho_{p})}$, where $\widetilde{(\rho_{p})}$ is the dual weight of $\rho_{p}$ constructed from $\a_{p}$ and $\rho_{p}$ like $\widetilde{\theta}$ was constructed from $\a$ and $\theta$. Recall that we view $\G\pres{}{\a}{\ltimes}N$ as acting standardly on $\Ltwo{\G}\tensor\Ltwo{N,\theta}$ identified with $\Ltwo{\G\pres{}{\a}{\ltimes}N,\widetilde{\theta}}$ by \eqref{eq:inv_norm_pos_func__gns_theta_tilde__1} and \eqref{eq:inv_norm_pos_func__gns_theta_tilde__2}. Observe that $\H:=\overline{\gnsmap_{\widetilde{\theta}}(q\mathcal{N}_{\widetilde{\theta}}q)}$ equals $\Ltwo{\G}\tensor\overline{\gnsmap_{\theta}(pNp)}$ by \eqref{eq:inv_norm_pos_func__gns_theta_tilde__1} and \eqref{eq:inv_norm_pos_func__gns_theta_tilde__2} because $p,q$ belong to the suitable centralizers.  Finally, note that the two natural ways of viewing $q(\G\pres{}{\a}{\ltimes}N)q$ as acting standardly on $\H$ and the corresponding GNS maps agree.

We claim that $\H$ is a reducing subspace for $U_{\a}$, and that the restriction is precisely the canonical implementing unitary $U_{\a_{p}}$ of $\a_{p}$ constructed from $\rho_{p}$. Indeed, by \prettyref{lem:red_weight_GNS}, $\overline{\gnsmap_{\theta}(pNp)}$ is reducing for $J_{\theta}$ and the restriction is $J_{\theta_{p}}=J_{\rho_{p}}$, and similarly, $\H$ is reducing for $J_{\widetilde{\theta}}$ and the restriction is $J_{(\widetilde{\theta})_{q}}=J_{\widetilde{(\rho_{p})}}$. So all in all, $\H$ is reducing for $(J_{\widehat{\varphi}}\otimes J_{\theta})J_{\widetilde{\theta}}=U_{\a}$, and the restriction is $(J_{\widehat{\varphi}}\otimes J_{\rho_{p}})J_{\widetilde{(\rho_{p})}}=U_{\a_{p}}$.

In conclusion, by passing to $q(\G\pres{}{\a}{\ltimes}N)q$ we transport the verification of the claim to the case $p=\one$, so we can assume that $\theta$ is a faithful normal $\a$-invariant \emph{state}.  But in this case, the canonical unitary implementation $U_{\a}$ is given by the simple formula
\[
(\om \tensor \i)(U_{\a}^*) \gnsmap_\theta(x) = \gnsmap_\theta( (\om \tensor \i)(\a(x)) )\qquad (\forall x\in N, \om \in \Lone{\G}),
\]
see the paragraph preceding \cite[Definition 4.4]{Daws_Skalski_Viselter__prop_T}. Hence, $\gnsmap_{\theta}(\one)$ is invariant under $U_{\a}$.
\end{proof}

In the rest of this section we present several applications of \prettyref{thm:inv_norm_pos_func}.
\begin{defn}
  Let $\HH$ be a closed quantum subgroup of a LCQG $\G$ in the sense of Woronowicz. The canonical unitary implementation of the left action of $\G$ on $\Linfty{\G/\HH}$ will be called the \emph{quasi-regular representation} of $\G/\HH$.
\end{defn}
We have the following immediate consequence of \prettyref{thm:inv_norm_pos_func}. Its classical version is a consequence of \citep[Theorem E.3.1]{Bekka_de_la_Harpe_Valette__book} (take $\sigma$ to be the trivial representation there), as the induction of the trivial representation gives the quasi-regular representation \cite[Example E.1.8 (ii)]{Bekka_de_la_Harpe_Valette__book}.

\begin{thm}
  Let $\HH$ be a closed quantum subgroup of a LCQG $\G$ in the sense of Woronowicz. The left action of $\G$ on $\Linfty{\G/\HH}$ has an invariant normal state if and only if the quasi-regular representation of $\G/\HH$ has a non-zero invariant vector.
\end{thm}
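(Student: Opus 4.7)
The plan is to deduce the statement directly from \prettyref{thm:inv-can} applied to the left action $\Delta_{\G/\HH}$ of $\G$ on $N:=\li{\G/\HH}$, whose canonical unitary implementation $U$ is, by definition, the quasi-regular representation of $\G/\HH$. Both implications should fall out quickly.

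For the forward direction, suppose $\rho\in N_{*}^{+}$ is a $\Delta_{\G/\HH}$-invariant normal state. Then \prettyref{thm:inv-can} supplies the unique $\zeta$ in the positive cone $\Ltwo{N}_{+}$ with $\rho=\om_{\zeta}$, and asserts that $\zeta$ is invariant under $U$. Since $\rho\ne 0$, also $\zeta\ne 0$, furnishing the required nonzero invariant vector of the quasi-regular representation.

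For the converse, suppose $\xi\ne 0$ is invariant under $U$. Rather than trying to locate an invariant vector inside the positive cone in order to re-apply \prettyref{thm:inv-can}, I would verify directly that $\om_{\xi}|_{N}\in N_{*}^{+}$ is invariant under $\Delta_{\G/\HH}$. Using the corepresentation/implementation identity $\Delta_{\G/\HH}(x)=U(\one\tensor x)U^{*}$ (a defining feature of the canonical implementation), together with the invariance $U(\eta\tensor\xi)=\eta\tensor\xi$ (and hence also $U^{*}(\eta\tensor\xi)=\eta\tensor\xi$) for every $\eta\in\Ltwo{\G}$, a one-step slice computation gives
\[
\langle(\i\tensor\om_{\xi})(\Delta_{\G/\HH}(x))\eta,\eta'\rangle=\langle U(\one\tensor x)U^{*}(\eta\tensor\xi),\eta'\tensor\xi\rangle=\om_{\xi}(x)\langle\eta,\eta'\rangle,
\]
so that $(\i\tensor\om_{\xi})(\Delta_{\G/\HH}(x))=\om_{\xi}(x)\one$ for every $x\in N$. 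Normalizing $\om_{\xi}$ then yields the desired invariant normal state.

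There is no real obstacle: once \prettyref{thm:inv-can} is in hand and the formula $\Delta_{\G/\HH}(x)=U(\one\tensor x)U^{*}$ is recorded, the proof amounts to a citation plus a short direct verification. The only mild subtlety worth highlighting is that in the $(\Rightarrow)$ direction one needs an invariant vector that actually lies in the positive cone, not merely some invariant vector in $\Ltwo{N}$; but this is exactly what \prettyref{thm:inv-can} delivers, so it causes no trouble.
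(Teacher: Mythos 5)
Your argument is correct and is essentially the paper's own: the result is recorded there as an immediate consequence of \prettyref{thm:inv-can}, with your forward direction using the positive-cone vector exactly as intended and your converse being the easy ``sufficiency'' half of that theorem (the slice computation you spell out). The only cosmetic point is the implementation convention --- the paper's setup has $\alpha(x)=U^{*}(\one\tensor x)U$ rather than $U(\one\tensor x)U^{*}$ --- which is immaterial here since your $\xi$ is invariant under both $U$ and $U^{*}$.
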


\subsection{An application to amenability of representations and a related notion}
\begin{defn}[\citep{Bedos_Conti_Tuset__amen_co_amen_alg_QGs_coreps,Bedos_Tuset_2003,Ng__amen_rep_Reiter,Ng_Viselter__amenability_LCQGs_coreps}]
\label{def:rep_amen}Let $\G$ be a LCQG. A representation $U$ of
$\G$ on a Hilbert space $\H$ is \emph{left amenable} if there is
a state $m$ of $B(\H)$ such that 
\begin{equation}
m\left[(\om\tensor\i)\left(U^{*}(\one\tensor x)U\right)\right]=\om(\one)m(x)\qquad(\forall x\in B(\H),\om\in\Lone{\G}),\label{eq:left_amen}
\end{equation}
in which case we say that $m$ is a left-invariant mean of $U$; equivalently,
the left action $\a_{U}:B(\H)\to\Linfty{\G}\tensorn B(\H)$ of $\G$
on $B(\H)$ given by $\a_{U}(x):=U^{*}(\one\tensor x)U$, $x\in B(\H)$,
has an invariant mean.\emph{ Right amenability} is defined similarly
by replacing $U$ by $U^{*}$.
\end{defn}

Observe that $U$ is left amenable if and only if $\overline{U}$ is right amenable.

\begin{rem}
  Although in the above definition $m$ is \emph{not} assumed to be normal, condition \eqref{eq:left_amen} can be abbreviated as
\[
(\i\tensor m)\left[U^{*}(\one\tensor x)U\right]=m(x)\one\qquad(\forall x\in B(\H)),
\]
where the slice map $\i\tensor m$ is as defined in \citep{Neufang__amp_cb_slice_maps}; see \citep[Lemma 2.2]{Ng_Viselter__amenability_LCQGs_coreps} for a succinct account.
\end{rem}
A possibly stronger notion of amenability involves almost-invariant vectors as part \ref{enu:amen_prod_alm_inv_vec__1} of the next result shows.

\begin{prop}[{\citep{Bedos_Tuset_2003}}]
  \label{prop:amen_prod_alm_inv_vec}Let $\G$ be a LCQG and $U,V$ be representations of $\G$. 
  \begin{enumerate}
  \item \label{enu:amen_prod_alm_inv_vec__1} If $V\tp U$ has almost-invariant vectors, then $U$ is left amenable and $V$ is right amenable. 
  \item \label{enu:amen_prod_alm_inv_vec__2} If $V\tp U$ has a non-zero invariant vector, then there exist a \emph{normal} left-invariant mean of $U$ and a \emph{normal} right-invariant mean of $V$.
  \end{enumerate}
  In both parts above, $V\tp U$ can equivalently be replaced by its contragradient $\overline{V\tp U} = \overline{V}\tpr \overline{U}$.
\end{prop}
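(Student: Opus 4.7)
The plan is to pass to the left action $\a_W(z) := W^*(\one \tensor z)W$ of $\G$ on $B(\H_V) \tensorn B(\H_U) = B(\H_V \tensor \H_U)$ for $W := V \tp U$, and to recover the two requested invariant means as marginals of an $\a_W$-invariant (part (2)) or approximately $\a_W$-invariant (part (1)) state. The whole argument rests on two leg-number commutation identities: since $V_{12}$ commutes with operators supported on leg $3$,
\[
\a_W(\one \tensor y) = U_{13}^* (\one \tensor \one \tensor y) U_{13} = (\a_U(y))_{13} \qquad (y \in B(\H_U));
\]
symmetrically, writing the contragradient as $\overline{W} = \overline{V \tp U} = \overline{V} \tpr \overline{U} = \overline{U}_{13}\overline{V}_{12}$, the commutation of $\overline{U}_{13}$ with leg-$2$-supported operators gives $\a_{\overline{W}}(\overline{x} \tensor \one) = (\a_{\overline{V}}(\overline{x}))_{12}$ for $\overline{x} \in B(\overline{\H_V})$.

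For part (2), a non-zero invariant vector $\zeta$ of $W$ produces the normal $\a_W$-invariant vector state $\om_\zeta$ on $B(\H_V) \tensorn B(\H_U)$, by a direct calculation using $W(\eta \tensor \zeta) = \eta \tensor \zeta$. Slicing the invariance of $\om_\zeta$ against $\one \tensor y$ and applying the first identity above yields $(\i \tensor \rho_U)(\a_U(y)) = \rho_U(y)\one$ for the normal marginal $\rho_U(y) := \om_\zeta(\one \tensor y)$, establishing left amenability of $U$ with a normal mean. For the $V$-side, $\overline{\zeta}$ is invariant under $\overline{W}$ (contragredient preserves invariance via $\widehat{\epsilon} \circ \widehat{R} = \widehat{\epsilon}$), so the parallel argument with the second identity produces a normal left-invariant mean for $\overline{V}$, equivalent through the transpose $\top : B(\H_V) \to B(\overline{\H_V})$ to a normal right-invariant mean for $V$ by the observation following \prettyref{def:rep_amen}.

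For part (1), let $(\zeta_i)$ be almost-invariant unit vectors for $W$; applying $W^*$ to $W(\eta \tensor \zeta_i) - \eta \tensor \zeta_i \to 0$ shows $W^*(\eta \tensor \zeta_i) - \eta \tensor \zeta_i \to 0$ as well. Substituting both into $\langle W^*(\one \tensor z) W(\eta \tensor \zeta_i), \eta \tensor \zeta_i\rangle$ forces $\om_{\zeta_i}((\om_\eta \tensor \i)(\a_W(z))) - \om_\eta(\one)\om_{\zeta_i}(z) \to 0$, so any weak-$*$ cluster point $m$ of $(\om_{\zeta_i})$ in the unit ball of $B(\H_V \tensor \H_U)^*$ is an $\a_W$-invariant state, by the norm-density of $\linspan\{\om_\eta : \eta \in \Ltwo{\G}\}$ in $\Lone{\G}$ and norm-continuity in $\om$ of both sides of the invariance equation. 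Its marginal $m(\one \tensor \cdot)$ is a left-invariant mean for $U$; the same extraction applied to the almost-invariant net $(\overline{\zeta_i})$ for $\overline{W}$ yields a left-invariant mean for $\overline{V}$, transferring as above to a right-invariant mean for $V$.

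The final clause of the proposition follows from the symmetry of all arguments under contragradient and the identity $\overline{\overline{U}} = U$. The most delicate step throughout is the contragradient bookkeeping: tracking the interchange $\tp \leftrightarrow \tpr$ under bars, confirming that (almost-)invariance transfers from $\zeta$ to $\overline{\zeta}$, and transporting the resulting mean on $B(\overline{\H_V})$ back through $\top$ to a normal mean on $B(\H_V)$ that satisfies the right-invariance condition with respect to $V$ rather than the left-invariance condition for $\overline{V}$.
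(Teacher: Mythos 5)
Your proof is correct, and its engine --- obtaining the two means as marginals of a state on $B(\KHilb\tensor\H)$ that is (approximately) invariant under $z\mapsto W^*(\one\tensor z)W$ for $W=V\tp U$, via leg-number commutation --- is the same as the paper's. The differences are in execution. For part \ref{enu:amen_prod_alm_inv_vec__1} the paper simply cites \citep[Proposition 5.2 (4)]{Bedos_Tuset_2003}, whereas your weak-$*$ cluster-point argument is a correct, self-contained replacement. For the $V$-marginal the paper bypasses your contragradient detour: since an invariant unit vector $\Xi$ satisfies both $W(\eta\tensor\Xi)=\eta\tensor\Xi$ and $W^*(\eta\tensor\Xi)=\eta\tensor\Xi$, one tests $\om_\Xi$ against $W(\one\tensor x\tensor\one)W^*=(V(\one\tensor x)V^*)_{12}$ (now it is $U_{13}$ that commutes with the leg-$2$ operator) and reads off right-invariance of $x\mapsto\om_\Xi(x\tensor\one)$ directly, with no transpose bookkeeping; the same trick is available for the almost-invariant net in part \ref{enu:amen_prod_alm_inv_vec__1}, since, as you observe, almost-invariance under $W$ forces almost-invariance under $W^*$. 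Your route through $\overline{W}=\overline{V}\tpr\overline{U}$ and the equivalence of left amenability of $\overline{V}$ with right amenability of $V$ is also valid --- and you correctly flag the delicate points, namely that invariance passes to $\overline{\zeta}$ because $\widehat{\epsilon}$ is fixed by the unitary antipode and that $\top$ is normal, so normality of the transported mean is preserved --- but it is longer than necessary.
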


\begin{proof}Assertion \ref{enu:amen_prod_alm_inv_vec__1} is precisely \citep[Proposition 5.2 (4)]{Bedos_Tuset_2003}. The same argument also gives \ref{enu:amen_prod_alm_inv_vec__2}. Indeed, if $U,V$ are representations of $\G$ on $\H,\KHilb$, respectively, and the unit vector $\Xi \in \KHilb \tensor \H$ is invariant under $V\tp U$, then $\omega_\Xi$ is clearly a (normal) left- and right-invariant mean of $V\tp U$. Thus, the normal state of $B(\H)$ given by $B(\H) \ni x \mapsto \omega_\Xi(\one \tensor x)$ is a left-invariant mean of $U$, and the normal state of $B(\KHilb)$ given by $B(\KHilb) \ni x \mapsto \omega_\Xi(x \tensor \one)$ is a right-invariant mean of $V$.
\end{proof}

We do not know in general whether the converse of \ref{enu:amen_prod_alm_inv_vec__1} is true; this would imply an affirmative answer to the famous amenability--co-amenability question. The answer is positive in the classical case by Bekka \citep{Bekka__amen_unit_rep}, and also in the discrete case:

\begin{thm}[{\citep[Theorem 9.5]{Bedos_Conti_Tuset__amen_co_amen_alg_QGs_coreps}}]
\label{thm:DQG_amen_prod_alm_inv_vec}If $U$ is a representation
of a discrete quantum group $\G$, then $\overline{U}\tp U$ has almost-invariant
vectors if (and only if) $U$ is left amenable.
\end{thm}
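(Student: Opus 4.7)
The $\Leftarrow$ direction is immediate from \prettyref{prop:amen_prod_alm_inv_vec} \ref{enu:amen_prod_alm_inv_vec__1} with $V=\overline U$. For the non-trivial $\Rightarrow$ direction, the plan is to use \prettyref{thm:inv_norm_pos_func} to transport invariance from normal states to positive-cone vectors, and then to identify the canonical unitary implementation of the relevant action with $\overline U\tp U$ up to operations preserving almost-invariant vectors.

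First I would apply a Day-type convexity argument to upgrade the invariant mean $m$ furnished by left amenability of $U$ to a net of \emph{normal} states $(\rho_i)\subseteq B(\H)_*$ that are asymptotically invariant for the left action $\a_U(x)=U^*(\one\tensor x)U$ on $B(\H)$, in the sense that
\[
\bigl\|(\om\tensor\i)(\a_U(\rho_i))-\om(\one)\rho_i\bigr\|_{B(\H)_*}\xrightarrow[i]{}0\qquad(\forall\,\om\in\Lone{\G}).
\]
The convex-hull/Hahn--Banach trick converts weak$^*$ approximation of $m$ by normal states into norm asymptotic invariance after passing to convex combinations; making the approximation simultaneous in $\om$ is feasible thanks to the discreteness of $\G$, which affords good separability and approximate-unit structure on $\Lone{\G}$.

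Each $\rho_i$ has a unique Araki positive-cone representative $\xi_i\in\Ltwo{B(\H)}_+$, and the bijection $\rho\leftrightarrow\xi$ is a norm-homeomorphism by Powers--Størmer. Combining this with an asymptotic form of \prettyref{thm:inv_norm_pos_func} --- the equivalence between invariance of $\om_\xi$ and of $\xi$ under the canonical implementation is a norm-continuous statement on both sides, so it passes to the approximate setting --- yields a net $(\xi_i)$ of almost-invariant unit vectors for the canonical unitary implementation $U_{\a_U}$ of $\a_U$, regarded as a representation of $\G$ on $\Ltwo{B(\H)}$.

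Finally, I would identify $U_{\a_U}$ with $\overline U\tp U$ up to operations preserving the presence of almost-invariant vectors (e.g.~the contragredient passage from the last sentence of \prettyref{prop:amen_prod_alm_inv_vec}). The canonical trace $\operatorname{Tr}$ on $B(\H)$ is $\a_U$-invariant (immediately from $UU^*=\one$), so the standard form realises $\Ltwo{B(\H)}\cong\H\tensor\overline{\H}$ with trivial modular data, and the GNS description
\[
(\om\tensor\i)(U_{\a_U}^*)\gnsmap_{\operatorname{Tr}}(x)=\gnsmap_{\operatorname{Tr}}\bigl((\om\tensor\i)(\a_U(x))\bigr)
\]
together with $\overline U=(R\tensor\top)(U)$ reduces the identification to a tensor-leg computation. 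The principal obstacle is the first step, which requires a Day-type argument that is uniform in $\om$ in the discrete quantum setting; the remaining steps are essentially bookkeeping once the standard form and the conjugate representation are set up carefully.
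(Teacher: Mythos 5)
Your overall architecture is the same as the paper's: left amenability of $U$ says that the action $\a_U$ admits an invariant state, one then shows that the canonical unitary implementation of $\a_U$ has almost-invariant vectors, and finally one identifies that implementation with $\overline{U}\tp U$. The paper handles the middle step by citing \citep[Proposition 4.11 (a)]{Daws_Skalski_Viselter__prop_T} (which is where discreteness is used) and the last step by \prettyref{lem:alpha_U_unit_impl}; what you propose is essentially to reprove the cited middle step, and that is where your argument breaks down.

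The gap is the ``asymptotic form of \prettyref{thm:inv_norm_pos_func}.'' That theorem is an exact equivalence between the vanishing of two defect quantities, and such a statement does not self-improve to an approximate equivalence by ``norm-continuity'': one needs a quantitative inequality bounding the vector defect $\|(\om\tensor\i)(U_{\a_U})\xi_i-\om(\one)\xi_i\|$ by the state defect, i.e.\ a Powers--St{\o}rmer-type estimate, and the obstruction is that slicing $U_{\a_U}$ against a state $\om\in\Lone{\G}$ has no reason to carry the positive-cone representative of $\rho_i$ to the positive-cone representative of $\rho_i\circ(\om\tensor\i)\circ\a_U$ (classically one translates by group elements and the canonical implementation preserves the cone, which is exactly what fails to make sense here). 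Note that your continuity argument, if valid, would use discreteness nowhere and would therefore prove that left amenability of $U$ implies almost-invariant vectors for $\overline{U}\tp U$ for \emph{arbitrary} LCQGs --- which the paper explicitly records as an open problem equivalent to the amenability/co-amenability question. So discreteness must enter essentially at the states-to-vectors step; it is \emph{not} needed where you place the difficulty, since making the Day-type argument uniform in $\om$ is routine (index the net by finite subsets of $\Lone{\G}$ and $\e>0$, and apply Mazur's theorem to the map $\rho\mapsto(\rho\circ(\om\tensor\i)\circ\a_U-\om(\one)\rho)_{\om\in F}$). A secondary error: the trace on $B(\H)$ need not be $\a_U$-invariant --- $(\i\tensor\operatorname{tr})(U^*(\one\tensor x)U)=\operatorname{tr}(x)\one$ forces the contragredient matrix to be unitary, which fails outside the Kac setting, and ``$UU^*=\one$'' does not give it for a unitary with a nontrivial first leg --- so your GNS formula for $U_{\a_U}$ (which the paper uses only for \emph{invariant} states) is unjustified; this part is harmless only because the identification with $\overline{U}\tp U$ can simply be quoted from \prettyref{lem:alpha_U_unit_impl}.
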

We are ready to present the main result of this subsection.
\begin{thm}\label{thm:amnbl-chars}
\label{thm:rep_amen_inv_vect}Let $U$ be a representation of a LCQG
$\G$ on a Hilbert space $\H$. 
\begin{enumerate}
\item Consider the following conditions:
\begin{enumerate}
\item \label{enu:rep_amen_inv_vect__1a}the representation $U$ is left amenable: there exists a left-invariant mean of $U$;
\item \label{enu:rep_amen_inv_vect__1b}the representation $\overline{U}\tp U$ has almost-invariant vectors.
\end{enumerate}
Then \prettyref{enu:rep_amen_inv_vect__1b}$\implies$\prettyref{enu:rep_amen_inv_vect__1a}, and the converse holds if $\G$ is discrete.
\item The following conditions are equivalent:
\begin{enumerate}
\item \label{enu:rep_amen_inv_vect__2a}there exists a \emph{normal} left-invariant mean of $U$;
\item \label{enu:rep_amen_inv_vect__2b}the representation $\overline{U}\tp U$ has a non-zero invariant vector.
\end{enumerate}
\end{enumerate}
\end{thm}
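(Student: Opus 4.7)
The implications $(b) \Rightarrow (a)$ in both parts follow immediately from \prettyref{prop:amen_prod_alm_inv_vec}, and the converse in part~(1) under discreteness is precisely \prettyref{thm:DQG_amen_prod_alm_inv_vec}. The only new content is the implication $(a) \Rightarrow (b)$ in part~(2), where the newly established \prettyref{thm:inv_norm_pos_func} enters.

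To prove this, suppose $m \in B(\H)_*^+$ is a normal left-invariant mean of $U$. By \prettyref{def:rep_amen}, $m$ is exactly an invariant normal state for the left action $\alpha_U : B(\H) \to \Linfty{\G} \tensorn B(\H)$, $\alpha_U(x) := U^*(\one \tensor x)U$, of $\G$ on $B(\H)$. Applying \prettyref{thm:inv_norm_pos_func} to $(\alpha_U,m)$ produces a non-zero vector $\xi$ in the positive cone of $\Ltwo{B(\H)}$ with $\om_\xi = m$ and $V(\eta \tensor \xi) = \eta \tensor \xi$ for every $\eta \in \Ltwo{\G}$, where $V \in \Linfty{\G} \tensorn B(\Ltwo{B(\H)})$ is the canonical unitary implementation of $\alpha_U$.

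The remaining step is to identify $V$, via the natural unitary isomorphism $\Ltwo{B(\H)} \cong \overline{\H} \tensor \H$ (under which left multiplication by $B(\H)$ becomes $\one \tensor B(\H)$), with the representation $\overline{U} \tp U = \overline{U}_{12} U_{13}$ of $\G$ on $\overline{\H} \tensor \H$. This identification will then transport $\xi$ to a non-zero invariant vector for $\overline{U} \tp U$, yielding $(b)$. One may equivalently end up with a variant such as $U \tp \overline{U}$ depending on sign conventions; the relation $\overline{W_1 \tp W_2} = \overline{W_1} \tpr \overline{W_2}$, together with the preservation of invariant vectors under conjugation, renders these interchangeable for our purposes.

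The hard part will be precisely the identification of $V$ with $\overline{U} \tp U$. My plan is to use a convenient $\alpha_U$-invariant n.s.f.~weight on $B(\H)$ such as the trace (which is preserved by unitary conjugation), realize $\Ltwo{B(\H)}$ as $\mathrm{HS}(\H) \cong \overline{\H} \tensor \H$, and match matrix coefficients via the formula $(\om \tensor \i)(V^*)\gnsmap_{\theta}(x) = \gnsmap_{\theta}((\om \tensor \i)(\alpha_U(x)))$ recalled for invariant states at the end of the proof of \prettyref{thm:inv_norm_pos_func} (extending to invariant n.s.f.~weights in the usual manner), together with the description $\overline{U} = (R \tensor \top)(U)$ of the contragradient. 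In the Kac case the verification reduces to the standard identity $R(u_{ij}^*) = u_{ji}$ on matrix coefficients of $U$; for general LCQGs, a version of this identity twisted by the scaling group suffices. In either case it is ultimately a bookkeeping calculation once the conventions on $\overline{\H}$ and on the modular conjugations $J_\theta$, $J_{\widetilde{\theta}}$ that define $V$ have been pinned down.
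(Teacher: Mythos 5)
Your proposal follows essentially the same route as the paper: reinterpret the normal left-invariant mean as a normal invariant state for $\a_U$, apply \prettyref{thm:inv_norm_pos_func}, and identify the canonical unitary implementation of $\a_U$ (with $B(\H)$ standardly represented on $\overline{\H}\tensor\H$) with $\overline{U}\tp U$. The only divergence is that the identification you flag as ``the hard part'' and sketch a trace-and-matrix-coefficients computation for is a known result of Vaes (\citep[Proposition 4.2]{Vaes__unit_impl_LCQG}), which the paper simply quotes as \prettyref{lem:alpha_U_unit_impl}, so no new computation is required there.
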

The implications \prettyref{enu:rep_amen_inv_vect__1b}$\implies$\prettyref{enu:rep_amen_inv_vect__1a} and \prettyref{enu:rep_amen_inv_vect__2b}$\implies$\prettyref{enu:rep_amen_inv_vect__2a} hold by \prettyref{prop:amen_prod_alm_inv_vec}, so we are interested only in the converse implications. The implication \prettyref{enu:rep_amen_inv_vect__1a}$\implies$\prettyref{enu:rep_amen_inv_vect__1b} for discrete quantum groups is precisely \prettyref{thm:DQG_amen_prod_alm_inv_vec}, but the implication \prettyref{enu:rep_amen_inv_vect__2a}$\implies$\prettyref{enu:rep_amen_inv_vect__2b} proved below is new. We will establish the last two implications in a unified way. Remark that our proof of \prettyref{enu:rep_amen_inv_vect__1a}$\implies$\prettyref{enu:rep_amen_inv_vect__1b} is much simpler than that of \prettyref{thm:DQG_amen_prod_alm_inv_vec} in \citep{Bedos_Conti_Tuset__amen_co_amen_alg_QGs_coreps}. We require the following lemma; in the discrete case it was given a different proof in \citep[Lemma 4.13]{Daws_Skalski_Viselter__prop_T}.
\begin{lem}[{\citep[Proposition 4.2]{Vaes__unit_impl_LCQG}, see \citep[Corollary 2.6.3]{Vaes__PhD}}]
  \label{lem:alpha_U_unit_impl}Let $U$ be a representation of a LCQG $\G$ on a Hilbert space $\H$. Consider the left action $\a_{U}$ of $\G$ on $B(\H)$ defined in \prettyref{def:rep_amen}. Then viewing $B(\H)$ as standardly represented on $\overline{\H}\tensor\H$, the canonical unitary implementation of $\a_{U}$ is $\overline{U}\tp U$.
\end{lem}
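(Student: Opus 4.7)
The statement is \cite[Proposition 4.2]{Vaes__unit_impl_LCQG} (see also \cite[Corollary 2.6.3]{Vaes__PhD}), so the plan is to sketch the essential verifications rather than redevelop the theory from scratch. First I would realize $N:=B(\H)$ standardly on $\overline{\H}\tensor\H$ via the Hilbert--Schmidt identification $\overline{\xi}\tensor\eta\leftrightarrow|\eta\rangle\langle\xi|$; under this identification $x\in B(\H)$ acts as $\one_{\overline{\H}}\tensor x$, and the modular conjugation $J$ of the canonical trace (or of a convenient n.s.f.\ weight in infinite dimensions) is the antiunitary swap $\overline{\xi}\tensor\eta\mapsto\overline{\eta}\tensor\xi$.

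The strategy is to show that $V:=\overline{U}\tp U=\overline{U}_{12}U_{13}$ satisfies the properties characterizing the canonical implementation of $\alpha_U$. First, $V$ is automatically a unitary representation of $\G$ on $\overline{\H}\tensor\H$, since the tensor operation $\tp$ preserves being a representation (recalled in \prettyref{sec:prelim}) and $\overline{U}$ is a representation by construction. Second, I would check that $V$ implements $\alpha_U$: for $y=\one_{\overline{\H}}\tensor x$ with $x\in B(\H)$, one computes
\[
V(\one\tensor y)V^*=\overline{U}_{12}U_{13}(\one\tensor\one\tensor x)U_{13}^*\overline{U}_{12}^*
\]
and shows this equals $\alpha_U(x)$ placed in legs $1$ and $3$ with identity in leg $2$. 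The calculation is a leg-numbering manipulation that uses $\overline{U}=(R\tensor\top)(U)$ together with the antipode--coproduct identity $(R\tensor R)\Delta=\Sigma\Delta R$ applied to slices of $U$, converting the outer conjugation by $\overline{U}_{12}$ in the common first leg into an inversion of the $U$-conjugation, so that $U(\one\tensor x)U^*$ ultimately becomes $U^*(\one\tensor x)U=\alpha_U(x)$. Third, I would verify the modular compatibility of $V$ with $J$ (and with $\widehat{J}$) that Vaes uses to pin down the canonical implementation uniquely among implementing representations; this rests on $\overline{\overline{U}}=U$ together with the swap nature of $J$, which under $J$-conjugation exchanges the roles of $\overline{U}_{12}$ and $U_{13}$ and delivers $V$ back.

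The principal obstacle is the third step: the modular cone compatibility demands careful bookkeeping of how $R$, $\top$, and $J$ interleave across the three tensor legs, the subtlety being that $\overline{U}_{12}$ and $U_{13}$ share the first leg and do not a priori commute there. The first step is purely structural and the second is a (somewhat lengthy but) mechanical leg-numbering computation. Once all three ingredients are in place, the uniqueness statement accompanying Vaes' construction of the canonical implementation identifies $V$ with the canonical unitary implementation of $\alpha_U$, completing the proof.
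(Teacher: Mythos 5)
The paper gives no proof of this lemma at all: it is imported wholesale from Vaes, who establishes it by computing the canonical implementation $(J_{\widehat{\varphi}}\tensor J_{\theta})J_{\widetilde{\theta}}$ explicitly via the crossed product and the dual weight (the same description this paper uses in the proof of \prettyref{thm:inv_norm_pos_func}), so your verify-then-invoke-uniqueness plan is in any case a different route. Its second step, however, fails as written. With this paper's conventions ($W$ implements $\Delta$ in the sense $\Delta(x)=W^{*}(\one\tensor x)W$, $\a_{U}(x)=U^{*}(\one\tensor x)U$ in \prettyref{def:rep_amen}, and correspondingly $\a(x)=U_{\a}^{*}(\one\tensor x)U_{\a}$ for a canonical implementation), the identity to check for $V:=\overline{U}_{12}U_{13}$ is
\begin{equation*}
V^{*}\bigl(\one\tensor\one_{\overline{\H}}\tensor x\bigr)V=\bigl(\a_{U}(x)\bigr)_{13},
\end{equation*}
and it is immediate: $\overline{U}_{12}$ commutes with $\one\tensor\one_{\overline{\H}}\tensor x$ (disjoint legs), so it cancels, leaving $U_{13}^{*}(\one\tensor\one\tensor x)U_{13}=(U^{*}(\one\tensor x)U)_{13}$. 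You conjugate in the opposite direction, where nothing cancels: $V(\one\tensor\one\tensor x)V^{*}=\overline{U}_{12}\,(U(\one\tensor x)U^{*})_{13}\,\overline{U}_{12}^{*}$, and here $\overline{U}_{12}$ genuinely interferes, since it shares its first leg (in the noncommutative algebra $\li{\G}$) with $(U(\one\tensor x)U^{*})_{13}$. Your proposed fix---using the antipode--coproduct identity to ``convert'' $U(\one\tensor x)U^{*}$ into $U^{*}(\one\tensor x)U$---cannot work: these are different operators in general, and conjugation by $\overline{U}_{12}$ acts trivially on operators in leg $3$, so no identity can make it reverse a conjugation taking place there. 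The repair is simply to check the correct (and then trivial) identity above.

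The deeper gap is the uniqueness theorem on which your third step rests; it is not in the cited sources, and in the form you state it, it is false. Vaes proves that $U_{\a}$ \emph{satisfies} $(\widehat{J}\tensor J)U_{\a}(\widehat{J}\tensor J)=U_{\a}^{*}$; he does not prove that an implementing representation with this property is unique, and it need not be. Take $\G=\Z/2$ and $U$ the trivial representation on $\H=\C$, so that $\a_{U}$ is the trivial action on $B(\C)=\C$ and the canonical implementation is the trivial representation; the sign character of $\Z/2$, viewed as a representation on $\overline{\H}\tensor\H=\C$, also implements $\a_{U}$ and satisfies the modular compatibility (being real-valued it is fixed by the antilinear conjugations), yet it is not the canonical implementation. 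What excludes such examples classically is preservation of the self-dual positive cone (the uniqueness clause in Haagerup's standard form), and this is precisely the condition that is delicate for quantum $\G$: slices of a corepresentation are not unitaries, so cone preservation cannot be imposed ``pointwise'', and no off-the-shelf theorem asserting uniqueness of a cone-compatible implementing corepresentation is available in Vaes' paper or thesis. In other words, the step you describe as bookkeeping is the actual mathematical content of Vaes' Proposition 4.2; without proving such a uniqueness statement, the honest completion of the argument is Vaes' own computation of $(J_{\widehat{\varphi}}\tensor J_{\mathrm{Tr}})J_{\widetilde{\mathrm{Tr}}}$ for this particular action.
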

\begin{proof}[Proof of \prettyref{thm:rep_amen_inv_vect}]
  We prove the implications \prettyref{enu:rep_amen_inv_vect__1a}$\implies$\prettyref{enu:rep_amen_inv_vect__1b} (assuming that $\G$ is discrete) and \prettyref{enu:rep_amen_inv_vect__2a}$\implies$\prettyref{enu:rep_amen_inv_vect__2b}.  Condition \prettyref{enu:rep_amen_inv_vect__1a}, respectively \prettyref{enu:rep_amen_inv_vect__2a}, means that $\a_{U}$ has an invariant state, respectively a normal invariant state. Therefore, \citep[Proposition 4.11 (a)]{Daws_Skalski_Viselter__prop_T}, respectively \prettyref{thm:inv_norm_pos_func}, imply that the canonical unitary implementation of $\a_{U}$, which is $\overline{U}\tp U$ by \prettyref{lem:alpha_U_unit_impl}, has almost-invariant vectors, respectively a non-zero invariant vector.
\end{proof}

\section{Finite-covolume closed quantum subgroups and lattices}\label{sec:fin_covol_lat}

In this section we apply the preceding material and discussion on invariant weights to the study of closed quantum subgroups of finite covolume and lattices. 

\subsection{Finite covolume and unimodularity}

Classically, if a homogeneous space $G/H$ of a locally compact group admits a finite invariant measure then the unimodularity of $H$ is {\it equivalent} to that of $G$, i.e.~\prettyref{cor:g-so-h-unim} can be reversed when the invariant measure is finite. This follows for instance from the proof of \cite[Proposition B.2.2]{Bekka_de_la_Harpe_Valette__book}, which applies to finite-covolume $H\le G$ in general (rather than just {\it discrete} $H$, as the statement is phrased). In the present subsection we prove a quantum version of this remark.

\begin{thm}\label{thm:finite-covol-unim}
	Let $\bG$ be a LCQG and $\bH$ be a closed quantum subgroup in the sense of Vaes such that the left action of $\bG$ on $\Linfty{\G/\HH}$ admits an invariant normal state. Then, $\bH$ is unimodular if and only if $\bG$ is.
\end{thm}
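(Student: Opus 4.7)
The plan splits along the two implications. The forward direction, that unimodularity of $\bG$ implies that of $\bH$, is essentially already contained in \prettyref{cor:g-so-h-unim}: the hypothesis supplies an invariant normal state $\om$ on $\Linfty{\G/\HH}$, and a bounded positive normal functional has $\mathcal{M}_\om^+ = M_+$, so invariance automatically upgrades to complete invariance (see \prettyref{rem:inv}), making \prettyref{cor:g-so-h-unim} directly applicable.

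For the converse, assume $\bH$ is unimodular, so $\delta_\HH^{it} = \one$ for all $t \in \R$. By \prettyref{thm:quot_inv_nsf_weight}\prettyref{enu:inv2}, $\a_r(\delta_\G^{it}) = \delta_\G^{it} \tensor \delta_\HH^{it} = \delta_\G^{it} \tensor \one$, so the strongly continuous one-parameter unitary group $u_t := \delta_\G^{it}$ is fixed by $\a_r$ and therefore lies in $\Linfty{\G/\HH}$. The goal is to show $u_t = \one$ for all $t$. Since each $u_t$ is group-like, $\Delta_\G(u_t) = u_t \tensor u_t$, and $\Delta_{\G/\HH}$ is the restriction of $\Delta_\G$, applying $(\i \tensor \om)$ and using the invariance identity $(\i \tensor \om)\circ \Delta_{\G/\HH} = \om(\cdot)\one$ yields the key dichotomy
\[
\om(u_t)\, u_t \;=\; (\i \tensor \om)(u_t \tensor u_t) \;=\; (\i \tensor \om)(\Delta_{\G/\HH}(u_t)) \;=\; \om(u_t)\, \one,
\]
so for every $t$ either $u_t = \one$ or $\om(u_t) = 0$.

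To promote this pointwise dichotomy to $u_t \equiv \one$, I would use continuity: the map $t \mapsto \om(u_t)$ is continuous (strongly continuous unitaries evaluated against a normal state), and at $t = 0$ takes the value $\om(\one) = 1$. Hence $\om(u_t) \neq 0$ on some open neighborhood of $0$, which forces $u_t = \one$ there. Since $\{t : u_t = \one\}$ is a subgroup of $\R$ containing an open neighborhood of the origin, it equals $\R$, proving $\delta_\G^{it} = \one$ for all $t$ and hence that $\bG$ is unimodular. The only delicate step is the final continuity argument (equivalently, one could invoke the faithfulness of $\om$ from \prettyref{rem:hom_sp_inv_weight_faithful} to replace $\om(u_t) = 1$ by $u_t = \one$ via the standard computation $\om((u_t-\one)^*(u_t-\one)) = 2 - 2\Ree\om(u_t)$), but neither of these is a serious obstacle.
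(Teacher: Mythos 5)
Your proposal is correct and follows essentially the same route as the paper: the forward direction via \prettyref{cor:g-so-h-unim}, and for the converse, using \prettyref{thm:quot_inv_nsf_weight} to place $\delta_\G^{it}$ in $\Linfty{\G/\HH}$, then the group-like identity plus invariance of the state to get $\om(\delta_\G^{it})\delta_\G^{it}=\om(\delta_\G^{it})\one$, and normality/continuity at $t=0$ to conclude $\delta_\G^{it}=\one$ near the origin and hence everywhere. Your explicit remark that invariance of a normal \emph{state} automatically upgrades to complete invariance (so that \prettyref{cor:g-so-h-unim} and \prettyref{thm:quot_inv_nsf_weight} apply) is a detail the paper leaves implicit, and is a welcome clarification.
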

\begin{proof}
	\prettyref{cor:g-so-h-unim} already deduces that $\bH$ is unimodular if $\bG$ is, so we are only concerned with the opposite implication. We thus assume that $\bH$ is unimodular and seek to show that $\bG$ is.
	
	The existence of a (necessarily faithful) invariant normal state $\theta$ on $\li{\bG/\bH}$ and the unimodularity of $\bH$ imply, via \prettyref{thm:quot_inv_nsf_weight}, that the action \prettyref{eq:ar-action} satisfies
	\begin{equation*}
	\a_r(\delta_{\G}^{it})=\delta_{\G}^{it}\tensor \one\qquad(\forall t\in\R),
	\end{equation*}
	that is, $\delta_{\bG}^{it}\in \li{\bG/\bH}$ for all $t\in \bR$. 
	
	By $\delta_{\G}$ being group-like and the invariance of $\theta$, we have
	\[
	\theta(\delta_{\bG}^{it}) \delta_{\bG}^{it} = (\i \tensor \theta)(\Delta_{\bG/\bH}(\delta_{\bG}^{it})) = \theta(\delta_{\bG}^{it}) \one \qquad (\forall t\in \bR).
	\]
	Furthermore, $\theta$ is normal, so that $\theta(\delta_{\bG}^{it}) \xrightarrow[t \to 0]{}  \theta(\one)=1$. Therefore, there is a neighborhood $I$ of $0$ in $\R$ such that for each $t \in I$ we have $\theta(\delta_{\bG}^{it}) \neq 0$, hence $\delta_{\bG}^{it} = \one$. This implies that $\delta_{\bG} = \one$, i.e., $\bG$ is unimodular.
\end{proof}

\begin{rem}Another way to complete the above proof after showing that $\delta_{\bG}^{it}\in \li{\bG/\bH}$ for all $t\in \bR$ is as follows.
	The von Neumann subalgebra $N$ of $\li{\bG}$ generated by the group-like unitaries $\delta_{\bG}^{it}$, $t\in \bR$ is a Baaj--Vaes subalgebra in the sense of, say, \cite[\S 2.1]{Kasprzak_Khosravi_Soltan__int_act_QGs} (terminology inspired by \cite{bv-cross}): a von Neumann subalgebra invariant under
	\begin{itemize}
		\item the co-multiplication (because each $\delta_{\bG}^{it}$ is group-like);
		\item the unitary antipode;
		\item the scaling group (these last two by, say, \cite[Proposition 7.12]{Kustermans_Vaes__LCQG_C_star}).
	\end{itemize}
	It follows from \cite[Proposition A.5]{bv-cross} that $(N,\Delta_\G|_N)$ is a LCQG $\bK$. The latter is classical and abelian because $N$ is abelian and $\bK$ is co-commutative, and in fact must be a subgroup of $\bR$ because $N$ is generated by a one-parameter group of group-like unitaries. Furthermore, $\bK$ must be compact: indeed, the embedding
	\begin{equation*}
	N\subseteq \li{\bG/\bH}
	\end{equation*}
	ensures that the $\bG$-invariant state on the latter restricts to an invariant state on $N$, which must thus be the left Haar state.
	
	It follows that $\bK$ is trivial, i.e.~$\delta_{\bG}^{it}=\one$ for all $t\in \bR$. In short, $\bG$ is unimodular.
\end{rem}

\begin{rem}
	Note that unimodularity does not necessarily lift from $\bH\le \bG$ to $\bG$ when the invariant measure on $\bG/\bH$ is infinite, even classically:
	
	If $\bG$ is the $ax+b$ group of, say, \cite[Example 15.17 (g)]{Hewitt_Ross__I} and $\bH<\bG$ is the (unimodular!)~subgroup of translations $x\mapsto x+b$ isomorphic to $(\bR,+)$, then the modular function of $\bG$ is given by $(\R \backslash \{0\}) \times \R \ni (a,b) \mapsto |a|^{-1}$, thus it restricts to that of $\bH$, and hence by \prettyref{thm:quot_inv_nsf_weight} there is a necessarily infinite $\bG$-invariant measure on $\bG/\bH$, even though $\bG$ is not unimodular.
\end{rem}

In particular, in the same spirit as \prettyref{cor:norm-unim}, we have:

\begin{cor}
	Let $\bH\trianglelefteq\bG$ be a normal closed quantum subgroup such that $\bG/\bH$ is compact. Then, $\bH$ is unimodular if and only if $\bG$ is.
\end{cor}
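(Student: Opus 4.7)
The plan is a short reduction to Theorem~\prettyref{thm:finite-covol-unim} via Corollary~\prettyref{cor:norm-com-inv}. Since $\bH$ is normal in $\bG$, Corollary~\prettyref{cor:norm-com-inv} produces a completely invariant n.s.f.~weight for the left action of $\bG$ on $\Linfty{\bG/\bH}$, namely the left Haar weight $\varphi_{\bG/\bH}$. So the only real content is to upgrade this weight to an invariant normal \emph{state}, after which Theorem~\prettyref{thm:finite-covol-unim} applies directly.

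The key observation is that compactness of $\bG/\bH$ as a LCQG means, by definition, that $\Cz{\bG/\bH}$ is unital, equivalently that the left Haar weight $\varphi_{\bG/\bH}$ is finite (a bounded positive functional). Normalizing so that $\varphi_{\bG/\bH}(\one)=1$ yields a normal state on $\Linfty{\bG/\bH}$ which is invariant under the left action of $\bG$ (complete invariance in particular implies invariance on the whole of $\Linfty{\bG/\bH}_+$, hence on the unit ball). With an invariant normal state in hand, Theorem~\prettyref{thm:finite-covol-unim} gives the desired equivalence of unimodularity between $\bH$ and $\bG$.

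The main (and essentially only) thing to verify carefully is that compactness of the quotient LCQG $\bG/\bH$ really is equivalent to finiteness of its Haar weight; this is part of the standard Kustermans--Vaes theory for compact quantum groups and so requires no extra work. The proof therefore fits comfortably in a few lines, just stringing together \prettyref{cor:norm-com-inv}, the observation about $\varphi_{\bG/\bH}$ being a state when $\bG/\bH$ is compact, and \prettyref{thm:finite-covol-unim}.
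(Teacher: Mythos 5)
Your proof is correct and follows exactly the paper's route: the paper likewise combines \prettyref{cor:norm-com-inv} (the left Haar weight of $\bG/\bH$ is a completely invariant n.s.f.~weight) with \prettyref{thm:finite-covol-unim}, noting that compactness makes $\varphi_{\bG/\bH}$ a $\bG$-invariant normal state. Your extra care in spelling out the weight-to-state upgrade is fine but adds nothing beyond what the paper's one-line proof already implies.
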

\begin{proof}
	An immediate consequence of \prettyref{thm:finite-covol-unim} and  \prettyref{cor:norm-com-inv}, the latter arguing that the Haar state $\varphi_{\bG/\bH}$ of $\bG/\bH$ is $\bG$-invariant.
\end{proof}

\subsection{Lattices}
We introduce lattices in LCQGs by direct analogy to the classical case discussed in \cite[Definition B.2.1]{Bekka_de_la_Harpe_Valette__book}.

\begin{defn}\label{def:lat}
  Let $\bG$ be a LCQG. A {\it lattice} in $\bG$ is a discrete closed quantum subgroup $\bH\le \bG$ in the sense of Woronowicz such that the left action of $\bG$ on $\Linfty{\G/\HH}$ has a (necessarily faithful) invariant normal state. 
\end{defn}

Purely quantum examples arise from the general theory of Drinfeld doubles, as introduced in \cite[\S 4]{pw} and studied amply afterwards, e.g.~in \cite{mnw,dfy,roy-XXr,roy-wyw,Arano__unit_sph_rep_Drinfeld_dbls,vm}. Here we will follow \cite{pw} (which uses the \emph{right} regular representation). The initial data to be fed into the general construction in \cite{pw} is a {\it compact} quantum group $\bG$. The underlying C$^*$-algebra $\Cz{\cD\bG}$ of the Drinfeld double of $\bG$ is a C$^*$-completion of the non-unital $*$-algebra
\begin{equation}\label{eq:alg-dual}
  \cO(\bG)\otimes \Cc{\widehat{\bG}},
\end{equation}
where $\cO(\bG)\subseteq C(\bG)$ is the unique dense Hopf $*$-subalgebra and $\Cc{\widehat{\bG}}$ (standing for functions with {\it compact} support) is the algebraic direct sum of the matrix algebras $M_{\alpha}$, each dual to the coefficient matrix coalgebra $C_{\alpha}\subseteq\cO(\bG)$ of an irreducible $\bG$-representation $\alpha$.

How the construction leads to a LCQG in the sense of \cite{Kustermans_Vaes__LCQG_C_star,Kustermans_Vaes__LCQG_von_Neumann,Kustermans__LCQG_universal} is explained briefly in \cite[\S 6]{dfy}: \prettyref{eq:alg-dual} is a $*$-algebraic quantum group \cite{vd-alg,vd-drab} and hence its reduced and universal analytic counterparts are constructible as in \cite{kust-alg-red,kust-alg-univ}. Alternatively, that is proved directly in \cite[\S 8]{mnw} (C$^*$-algebraic setting) and in \cite{bv-cross} (von Neumann algebraic setting). In fact, we have
\begin{equation*}
\Cz{\cD\bG}\cong \Cz{\bG}\tensormin\Cz{\widehat{\bG}} \text{ and }\li{\cD\bG}\cong \li{\bG}\tensorn\li{\widehat{\bG}},
\end{equation*}
and $\cD\bG$ is unimodular with $h_\G\otimes \psi_{\widehat{\bG}}$ being the bi-invariant Haar weight on $\cD\bG$, where $h_\G := \varphi_\G = \psi_\G$ (see \cite[Theorem 4.2]{pw}, \cite[\S 8]{mnw} or \cite[\S 5]{bv-cross}).

This general framework realizes both $\bG$ and $\widehat{\bG}$ as closed quantum subgroups of $\cD\bG$ in the sense of Woronowicz: this follows from \cite[Theorem 4.3 and preceding discussion]{pw} using the construction of the universal  face of $\cD\bG$ by applying \cite{kust-alg-univ} to \eqref{eq:alg-dual}. To elaborate, at the $*$-algebraic quantum group level of \eqref{eq:alg-dual}, and thus also at the universal level, the strong quantum homomorphisms realizing these two closed quantum subgroups are just the slice maps with respect to the co-units $\epsilon_{\widehat{\G}}, \epsilon_\G$ at the suitable tensor legs, respectively.
As a result, the description of the co-multiplication given in \cite[equation (4.16)]{pw} together with \cite[equations (4.10) and (4.11)]{pw} imply, using \eqref{eq:right_quant_hom__comult}, that the right action $\alpha_r$ corresponding to the inclusion $\widehat{\bG}\le \cD\bG$ is
\begin{equation*}
  \alpha_r=\mathrm{id}\otimes \Delta_{\widehat{\bG}}: \li{\bG}\tensorn\li{\widehat{\bG}}\to \li{\bG}\tensorn\li{\widehat{\bG}}\tensorn \li{\widehat{\bG}}
\end{equation*}
(as this holds on \eqref{eq:alg-dual}). From this and the ergodicity of the co-multiplication it follows that we have 
\begin{equation}\label{eq:dg-mod-hat}
  \li{\cD\bG/\widehat{\bG}} = \li{\bG}\tensor \C\one \cong \li{\bG}. 
\end{equation}
Furthermore, this description of $\a_r$ and the left invariance of $\varphi_{\widehat{\bG}}$ make it clear that $\a_r$ is integrable. Consequently, \citep[Corollary 5.6]{Kasprzak_Khosravi_Soltan__int_act_QGs} implies that $\widehat{\bG}$ is actually a closed quantum subgroup of $\cD\bG$ in the sense of Vaes. A similar reasoning works for $\G$.

The following result explains how Drinfeld doubles of compact quantum groups fit into the present context of studying (finite) invariant measures on homogeneous spaces.

\begin{thm}\label{thm:dbl-inv}
  Let $\bG$ be a compact quantum group and $\cD\bG$ its Drinfeld double, as above. The following conditions are equivalent.
  \begin{enumerate}
  \item\label{enu:dbl1} $\bG$ is of Kac type;
  \item\label{enu:dbl2} $\widehat{\bG}\le \cD\bG$ is a lattice in the sense of \prettyref{def:lat};
  \item\label{enu:dbl3} The left action of $\cD\bG$ on $\li{\cD\bG/\widehat{\bG}}$ has a completely invariant n.s.f.~weight.
  \end{enumerate}
\end{thm}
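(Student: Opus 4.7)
The plan is to establish the equivalences via three implications---$(3)\Leftrightarrow(1)$, $(2)\Rightarrow(1)$, and $(1)\Rightarrow(2)$---which together close the cycle. The main inputs are \prettyref{thm:quot_inv_nsf_weight} (the modular-element criterion for invariant weights on quotients) and \prettyref{thm:finite-covol-unim} (unimodularity transfer across finite-covolume inclusions), applied to $\widehat{\bG}\le\cD\bG$ together with the structural facts recalled just before the theorem: $\cD\bG$ is unimodular, $\Linfty{\cD\bG/\widehat{\bG}}\cong\Linfty{\bG}$, and the right action of $\widehat{\bG}$ on $\Linfty{\cD\bG}$ is $\a_r=\i\tensor\Delta_{\widehat{\bG}}$.

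For $(3)\Leftrightarrow(1)$, I would apply \prettyref{thm:quot_inv_nsf_weight}, which translates (3) into the condition that $\delta_{\cD\bG}$ restricts to $\delta_{\widehat{\bG}}$ through the strong quantum homomorphism $\pi\colon\CzU{\cD\bG}\to\CzU{\widehat{\bG}}$ associated with the inclusion. Unimodularity of $\cD\bG$ makes $\left(\delta_{\cD\bG}^{\mathrm{u}}\right)^{it}=\one$ for all $t\in\R$, and $\pi$ sends units to units, so the restriction condition collapses to $\left(\delta_{\widehat{\bG}}^{\mathrm{u}}\right)^{it}=\one$, i.e., unimodularity of $\widehat{\bG}$. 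The latter is the classical characterization of the Kac property: a compact quantum group $\bG$ is Kac iff its discrete dual is unimodular.

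For $(2)\Rightarrow(1)$, the conclusion follows directly from \prettyref{thm:finite-covol-unim}: the existence of an invariant normal state on $\Linfty{\cD\bG/\widehat{\bG}}$ combined with unimodularity of $\cD\bG$ forces unimodularity of $\widehat{\bG}$, hence $\bG$ is of Kac type. For $(1)\Rightarrow(2)$, I would exhibit the Haar state $h_\bG$ as an invariant normal state on $\Linfty{\bG}\cong\Linfty{\cD\bG/\widehat{\bG}}$. By \prettyref{lem:quot_inv_nsf_weight} \ref{enu:quot_inv_nsf_weight__2}, it suffices to verify that $h_\bG\circ\mathcal{T}$ equals $\varphi_{\cD\bG}$ up to scaling, where $\mathcal{T}$ is the canonical operator-valued weight from $\Linfty{\cD\bG}$ onto $\Linfty{\cD\bG/\widehat{\bG}}$. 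Using $\a_r=\i\tensor\Delta_{\widehat{\bG}}$ and the left invariance of $\varphi_{\widehat{\bG}}$, one computes $\mathcal{T}(a\tensor b)=\varphi_{\widehat{\bG}}(b)\,(a\tensor\one)$, whence $(h_\bG\circ\mathcal{T})(a\tensor b)=h_\bG(a)\varphi_{\widehat{\bG}}(b)$. The Kac hypothesis makes $\varphi_{\widehat{\bG}}$ and $\psi_{\widehat{\bG}}$ proportional, so $h_\bG\circ\mathcal{T}\propto h_\bG\tensor\psi_{\widehat{\bG}}=\varphi_{\cD\bG}$, as required.

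The most delicate point is the $(1)\Rightarrow(2)$ direction: the identification of $\mathcal{T}$ on decomposable elements $a\tensor b$ and the careful bookkeeping of the various Haar weights on $\widehat{\bG}$. The Kac hypothesis enters precisely through the collapse $\varphi_{\widehat{\bG}}\propto\psi_{\widehat{\bG}}$---which is exactly what is needed to recognize the computed $h_\bG\circ\mathcal{T}$ as a scalar multiple of the left Haar weight $\varphi_{\cD\bG}=h_\bG\tensor\psi_{\widehat{\bG}}$ of $\cD\bG$.
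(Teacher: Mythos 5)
Your proposal is correct, and the overall architecture (trivial/structural implications plus one substantive computation for \prettyref{enu:dbl1}$\implies$\prettyref{enu:dbl2}) matches the paper's; but the substantive step is carried out by a genuinely different argument. The paper proves \prettyref{enu:dbl1}$\implies$\prettyref{enu:dbl2} by working directly with the comultiplication formula $\Delta_{\cD\bG} = (\i \tensor (\Ad{u}\circ \sigma ) \tensor \i) \circ (\Delta_\G \tensor \Delta_{\widehat{\bG}})$ and invoking Izumi's result that, in the Kac case, $((\i \tensor h) \circ \Ad{u})(\one \tensor a) = h(a)\one$; combining this with invariance of $h$ verifies the invariance of $h_\G$ on $\li{\cD\bG/\widehat{\bG}}$ by hand. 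You instead identify the canonical operator-valued weight as $\mathcal{T} = \i\tensor\varphi_{\widehat{\bG}}$ (immediate from $\a_r=\i\tensor\Delta_{\widehat{\bG}}$ and the complete left invariance of $\varphi_{\widehat{\bG}}$), note that $h_\G\circ\mathcal{T} = h_\G\tensor\varphi_{\widehat{\bG}}$ is proportional to $\varphi_{\cD\bG}=h_\G\tensor\psi_{\widehat{\bG}}$ exactly when $\widehat{\bG}$ is unimodular, and conclude via \prettyref{lem:quot_inv_nsf_weight}. This is cleaner in that it avoids the representation $u$ and the appeal to Izumi entirely, reuses the machinery of \prettyref{sec:invariant-weights}, and makes transparent where the Kac hypothesis enters ($\varphi_{\widehat{\bG}}\propto\psi_{\widehat{\bG}}$); the paper's computation, by contrast, is self-contained at the level of the double's defining data. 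Your treatment of \prettyref{enu:dbl3}$\iff$\prettyref{enu:dbl1} via the modular-element criterion of \prettyref{thm:quot_inv_nsf_weight} is also slightly more symmetric than the paper's (which only needs \prettyref{enu:dbl3}$\implies$\prettyref{enu:dbl1}, via \prettyref{cor:g-so-h-unim}, since \prettyref{enu:dbl2}$\implies$\prettyref{enu:dbl3} is formal); both are legitimate since the paper has already established that $\widehat{\bG}\le\cD\bG$ is closed in the sense of Vaes. The only presentational caveat is that your formula $\mathcal{T}(a\tensor b)=\varphi_{\widehat{\bG}}(b)(a\tensor\one)$ should be read as the operator-valued-weight identity $\mathcal{T}=\i\tensor\varphi_{\widehat{\bG}}$ on positive elements (or on the appropriate domain), rather than literally on arbitrary elementary tensors, since $\varphi_{\widehat{\bG}}$ is unbounded when $\bG$ is infinite; this is routine and not a gap.
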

\begin{proof}
  Item \prettyref{enu:dbl2} is clearly formally stronger than \prettyref{enu:dbl3}, since the former asks that the left action of $\cD\bG$ on $\li{\cD\bG/\widehat{\bG}}$ admit a {\it finite} invariant n.s.f.~weight.

  To see that \prettyref{enu:dbl3}$\implies$\prettyref{enu:dbl1} recall that $\cD\bG$ is always unimodular. It then follows from \prettyref{cor:g-so-h-unim} and \prettyref{enu:dbl3} that $\widehat{\bG}$ too is unimodular, equivalently: of Kac type, being discrete. In turn, this implies that $\bG$ is of Kac type.

  It remains to argue that \prettyref{enu:dbl1}$\implies$\prettyref{enu:dbl2}, i.e.~that in the Kac case, the left action of $\cD\bG$ on $\li{\cD\bG/\widehat{\bG}}$ admits an invariant normal state. We will prove that the Haar state $h_\G$ of $\G$ satisfies this. Recalling the construction of \cite{pw}, we have 
 \begin{equation}
  \Delta_{\cD\bG} = (\i \tensor (\Ad{u}\circ \sigma ) \tensor \i) \circ (\Delta_\G \tensor \Delta_{\widehat{\bG}}), \label{eq:comult_Drinfeld_double}
 \end{equation} 
 where $u \in \M{\Cz{\widehat{\bG}} \tensormin \Cz{\G}}$ is the right regular representation of $\G$. Since $\G$ is of Kac type, we have \begin{equation}((\i \tensor h) \circ \Ad{u})(\one \tensor a) = h(a)\one \qquad (\forall a \in \Linfty{\G});\label{eq:Izumi}\end{equation} indeed, \cite[Corollary 3.9]{Izumi__non_comm_Poisson} says that \[((\i \tensor h) \circ \Ad{u^*})(\one \tensor a) \in \C \one \qquad (\forall a \in \Linfty{\G}),\] which, by using the boundedness of $R, \widehat{R}$ and the identities $h \circ R = h$ and $(\widehat{R} \tensor R)(u)=u$, is seen to be equivalent to \[((\i \tensor h) \circ \Ad{u})(\one \tensor a) \in \C \one \qquad (\forall a \in \Linfty{\G}),\] and applying (the extension to $\M{\Cz{\widehat{\bG}}} = \Linfty{\widehat{\bG}}$ of) $\widehat{\epsilon}$ now gives \eqref{eq:Izumi}. (Alternatively, \eqref{eq:Izumi} just follows from the computation in the proof of \cite[Corollary 3.9]{Izumi__non_comm_Poisson} by replacing the right regular representation with its adjoint.) Combining \eqref{eq:comult_Drinfeld_double}, \eqref{eq:Izumi} and the invariance of $h$ yields 
\[
\begin{split}
& ((\i \tensor \i \tensor h \tensor \i) \circ \Delta_{\cD\bG})(a \tensor \one) \\ & \qquad = 
((\i \tensor \i \tensor h \tensor \i) \circ (\i \tensor \Ad{u} \tensor \i)) (\Delta_\G(a)_{13}) 
 = h(a)\one 
\end{split} \qquad (\forall a \in \Linfty{\G}).
\]
Remembering that left action of $\cD\bG$ on $\li{\cD\bG/\widehat{\bG}}$ is just the restriction of $\Delta_{\cD\bG}$ and using \eqref{eq:dg-mod-hat}, we get the desired conclusion. 
\end{proof}


\subsection{Lattices and property (T)}

Classically, it is well known that property (T) transfers between locally compact groups and their finite-covolume closed subgroups, and in particular lattices: see e.g.~\cite[Theorem 1.7.1]{Bekka_de_la_Harpe_Valette__book}. In the quantum setup discussed here we first prove the following quantum version of the (ii)$\implies$(i) implication of that result, via what essentially amounts to a straightforward adaptation of the proof (modulo some paraphrasing).

\begin{thm}\label{thm:covol}
  Let $\bH\le \bG$ be a closed quantum subgroup of a LCQG in the sense of Woronowicz such that the left action of $\bG$ on $\Linfty{\G/\HH}$ has an invariant normal state. If $\bH$ has property (T), then so does $\bG$. 
\end{thm}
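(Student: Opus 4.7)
The approach is to adapt the classical finite-covolume transfer argument: given a representation $U$ of $\bG$ on $\H$ with almost-invariant unit vectors $(\xi_i)_{i \in I}$, use property (T) of $\bH$ to show that each $\xi_i$ is already close to an $\bH$-invariant vector, and then average these $\bH$-invariant vectors against the $\bG$-invariant state $\om$ on $\li{\bG/\bH}$ to manufacture a $\bG$-invariant vector. Throughout, write $P$ for the orthogonal projection of $\H$ onto $\Inv(U|_{\bH})$. By \prettyref{rem:rstr_inv_vects} the net $(\xi_i)$ is also almost-invariant for $U|_{\bH}$, and by \cite[Corollary 4.16]{Brannan_Daws_Samei__cb_rep_of_conv_alg_of_LCQGs} applied to the preserved subspace $\Inv(U|_{\bH})$, the projection $\one \tensor P$ commutes with $U|_{\bH}$; consequently $\Inv(U|_{\bH})^\perp$ carries a genuine $\bH$-sub-representation of $U|_{\bH}$, which by construction has no nonzero $\bH$-invariant vectors.

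The first key step is to argue $P\xi_i \to \xi_i$ in norm. If this failed, then on a subnet with $\|(\one - P)\xi_i\| \geq c > 0$, the unit vectors $(\one-P)\xi_i/\|(\one-P)\xi_i\|$ would, by the commutation of $\one \tensor (\one - P)$ with $U|_{\bH}$ and the almost-invariance of the $\xi_i$ for $U|_{\bH}$, be almost-invariant for the sub-representation on $\Inv(U|_{\bH})^\perp$---contradicting property (T) for $\bH$ applied to that sub-representation.

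Now extend the hypothesized invariant normal state $\om$ on $\li{\bG/\bH}$ to a normal state on $\li{\bG}$, retaining the same symbol. By \prettyref{lem:hom_sp_inv_norm_state_inv_vects} the vector $\eta_i := (\om \tensor \i)(U)(P\xi_i)$ lies in $\Inv(U)$ for every $i$; I claim $\eta_i \to \xi_i$ in norm, which forces $\eta_i \neq 0$ for all large $i$ and produces the desired nonzero $\bG$-invariant vector. Decompose
\[
\eta_i - \xi_i \;=\; (\om \tensor \i)(U)(P\xi_i - \xi_i) \;+\; (\om \tensor \i)(U - \one\tensor\one)\xi_i.
\]
The first summand has norm at most $\|P\xi_i - \xi_i\|$, which is $o(1)$ by the previous step. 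For the second, realize $\om$ as a vector state $\om(x) = \langle (x \tensor \one_K)\Xi, \Xi\rangle$ for some unit vector $\Xi \in \Ltwo{\bG}\tensor K$; a direct Cauchy--Schwarz estimate then yields
\[
\|(\om \tensor \i)(U - \one\tensor\one)\xi_i\| \;\leq\; \|U_{13}(\Xi \tensor \xi_i) - \Xi \tensor \xi_i\|,
\]
and expanding $\Xi = \sum_k \alpha_k \tensor e_k$ in an orthonormal basis of $K$ expresses the right-hand side as $\bigl(\sum_k \|U(\alpha_k \tensor \xi_i) - \alpha_k \tensor \xi_i\|^2\bigr)^{1/2}$; each summand vanishes as $i$ grows and the whole sequence is dominated termwise by $4\|\alpha_k\|^2$ (summing to $4$), so dominated convergence finishes the estimate.

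The principal obstacle is the first key step, the ``spectral gap'' consequence of property (T) saying that almost-invariant vectors in any representation of a (T)-LCQG lie close to the invariant vectors. This is standard classically and transfers cleanly to the LCQG setting thanks to the Brannan--Daws--Samei commutation result, which delivers the requisite sub-representation on $\Inv(U|_{\bH})^\perp$ for property (T) of $\bH$ to act upon. The averaging in the second step is then a straightforward computation in which $\om$ plays the role of the classical $\bG$-invariant probability measure on $G/H$.
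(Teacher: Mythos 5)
Your proof is correct and follows essentially the same route as the paper's: use property (T) of $\bH$ to show the almost-invariant net satisfies $\|P\xi_i-\xi_i\|\to 0$ where $P$ projects onto $\Inv(U|_{\bH})$, then feed $P\xi_i$ into \prettyref{lem:hom_sp_inv_norm_state_inv_vects} to obtain invariant vectors $(\om\tensor\i)(U)P\xi_i$ and check they are eventually nonzero because they are close to the unit vectors $\xi_i$. You merely spell out details the paper asserts without proof (the spectral-gap subnet argument and the Cauchy--Schwarz/dominated-convergence estimate), and both fillers are sound.
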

\begin{proof}
  Let $U\in \li{\bG} \tensorn B(\H)$ be a representation of $\bG$ with almost-invariant vectors witnessed by a net $(\z_i)_{i \in \mathcal{I}}$ of unit vectors. Denote by $P$ the projection of $\H$ onto the subspace $\Inv(U|_{\HH})$ of $\bH$-invariant vectors. 

  The net $(\z_i)_{i \in \mathcal{I}}$ is also almost-invariant for the restriction $U|_{\bH}$, and hence, since $\bH$ has property (T),
  \begin{equation}\label{eq:zetas}
    \|\zeta_i-P\zeta_i\|\to 0. 
  \end{equation}

  Let $\om$ be as in \prettyref{lem:hom_sp_inv_norm_state_inv_vects}. Then 
  \begin{equation}\label{eq:oPzeta}
  (\om\tensor\i)(U)P\zeta_i
  \end{equation}
  belongs to $\Inv(U)$ for all $i \in \mathcal{I}$. We will thus be done if we prove that it must be non-zero for sufficiently large $i$.

  To that end, note first that by \prettyref{eq:zetas} the vectors \prettyref{eq:oPzeta} are arbitrarily close in norm to
  \begin{equation}\label{eq:ozeta}
    (\om\tensor\i)(U)\zeta_i
  \end{equation}
  for large $i$. In turn, because $(\z_i)_{i \in \mathcal{I}}$ is almost invariant, the vectors \prettyref{eq:ozeta} get arbitrarily close in norm to the unit vectors $\zeta_i$.
\end{proof}

In particular, we have:

\begin{cor}\label{cor:lift-t}
  If a lattice in a LCQG $\bG$ has property (T), then so does $\bG$.
\end{cor}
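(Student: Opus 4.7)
The plan is to observe that \prettyref{cor:lift-t} is an immediate specialization of \prettyref{thm:covol}. By \prettyref{def:lat}, a lattice $\bH \le \bG$ is by definition a (discrete) closed quantum subgroup in the sense of Woronowicz with the property that the left action of $\bG$ on $\Linfty{\bG/\bH}$ admits an invariant normal state. These are exactly the hypotheses on $\bH \le \bG$ required by \prettyref{thm:covol}, with the additional (but irrelevant for this deduction) requirement of discreteness of $\bH$.

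Thus I would simply note that $\bH \le \bG$ meets the hypotheses of \prettyref{thm:covol}, invoke that theorem, and conclude that property (T) for $\bH$ transfers to $\bG$. There is no real obstacle here; the content of the corollary is entirely contained in \prettyref{thm:covol}, with \prettyref{def:lat} merely packaging the hypotheses. The proof is therefore a one-sentence citation of \prettyref{thm:covol}.
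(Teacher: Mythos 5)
Your proposal is correct and matches the paper exactly: the paper presents \prettyref{cor:lift-t} with the words ``In particular, we have:'' immediately after \prettyref{thm:covol}, i.e.\ it is indeed a one-line specialization, since \prettyref{def:lat} supplies precisely the hypotheses of that theorem (with discreteness being extra and unused). Nothing further is needed.
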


\begin{cor}\label{cor:lift-t-dbl-CQG}
	If $\bG$ is a compact quantum group whose discrete dual has property (T), then so does its Drinfeld double $\cD\bG$.
\end{cor}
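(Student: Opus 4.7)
The plan is to reduce immediately to \prettyref{cor:lift-t} by exhibiting $\widehat{\bG}$ as a lattice in $\cD\bG$ via \prettyref{thm:dbl-inv}. The only nontrivial preliminary observation I need is that the hypothesis forces $\bG$ to be of Kac type: it is a theorem of Fima \cite{Fima__prop_T} that property (T) for a discrete quantum group entails unimodularity (i.e.~the Kac property), so property (T) of $\widehat{\bG}$ gives that $\widehat{\bG}$, and hence also $\bG$, is of Kac type.

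With this in hand, the implication \prettyref{enu:dbl1}$\implies$\prettyref{enu:dbl2} of \prettyref{thm:dbl-inv} shows that $\widehat{\bG}$ is a lattice in $\cD\bG$ in the sense of \prettyref{def:lat}: it is a discrete closed quantum subgroup (in the sense of Vaes, and \emph{a fortiori} Woronowicz, by the discussion preceding \prettyref{thm:dbl-inv}) such that the left action of $\cD\bG$ on $\Linfty{\cD\bG/\widehat{\bG}}$ admits an invariant normal state.

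At that point \prettyref{cor:lift-t} applies verbatim: property (T) lifts from the lattice $\widehat{\bG}$ to the ambient $\cD\bG$, concluding the proof.

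The only step requiring outside input is the invocation of Fima's unimodularity theorem; everything else is a direct citation of results already established in the paper. There is no real obstacle, since neither the finite-covolume step nor the property (T) transfer requires anything beyond what is encapsulated in \prettyref{thm:dbl-inv} and \prettyref{cor:lift-t}.
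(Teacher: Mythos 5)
Your proposal is correct and follows exactly the same route as the paper: invoke Fima's result that discrete quantum groups with property (T) are of Kac type, then combine \prettyref{thm:dbl-inv} with \prettyref{cor:lift-t}. No differences worth noting.
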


\begin{proof}
	Discrete quantum groups with property (T) are automatically of Kac type \cite[Proposition 3.2]{Fima__prop_T}, so we can combine \prettyref{thm:dbl-inv} and \prettyref{cor:lift-t} to get the result.
\end{proof}

Note that \prettyref{cor:lift-t-dbl-CQG} provides a new way to construct examples of non-classical, non-compact, non-discrete LCQGs with property (T), which does not rely on a deep representation-theoretical study such as in \cite{Arano__unit_sph_rep_Drinfeld_dbls}.
For instance, the Drinfeld doubles of the compact duals of the discrete quantum groups that were shown in \cite{Fima_Mukherjee_Patri_compact_bicross_prod,Vaes_Valvekens__prop_T_DQG_subfactor_tri_rep} to have property (T) also have property (T). 

The converse of \prettyref{thm:covol} (which holds classically) is the subject of upcoming work. 


\section*{Acknowledgements}
We are grateful to the referee for his/her comments, which significantly improved the paper's readability.
We also thank Matthew Daws and Adam Skalski for their interest and remarks.

A.C.~was partially supported by NSF grants DMS-1801011 and DMS-2001128. M.B.~was partially supported by NSF grant DMS-1700267.

This work is part of the project QUANTUM DYNAMICS supported by EU grant H2020-MSCA-RISE-2015-691246 and Polish Government grant 3542/H2020/2016/2.


\bibliographystyle{amsalpha}
\bibliography{bibliography}

\providecommand{\bysame}{\leavevmode\hbox to3em{\hrulefill}\thinspace}
\providecommand{\MR}{\relax\ifhmode\unskip\space\fi MR }
\providecommand{\MRhref}[2]{%
  \href{http://www.ams.org/mathscinet-getitem?mr=#1}{#2}
}
\providecommand{\href}[2]{#2}
\begin{thebibliography}{BBCW17}

\bibitem[Ara]{Arano__unit_sph_rep_Drinfeld_dbls}
Y.~Arano, \emph{Unitary spherical representations of {D}rinfeld doubles}, J.
  Reine Angew. Math. \textbf{742} (2018), 157--186.

\bibitem[BS]{bs-bYv}
S.~Baaj and G.~Skandalis, \emph{Unitaires multiplicatifs et dualit\'{e} pour
  les produits crois\'{e}s de {$C^*$}-alg\`ebres}, Ann. Sci. \'{E}cole Norm.
  Sup. (4) \textbf{26} (1993), no.~4, 425--488.

\bibitem[BV]{bv-cross}
S.~Baaj and S.~Vaes, \emph{Double crossed products of locally compact quantum
  groups}, J. Inst. Math. Jussieu \textbf{4} (2005), no.~1, 135--173.

\bibitem[BCT]{Bedos_Conti_Tuset__amen_co_amen_alg_QGs_coreps}
E.~B{\'e}dos, R.~Conti, and L.~Tuset, \emph{On amenability and co-amenability
  of algebraic quantum groups and their corepresentations}, Canad. J. Math.
  \textbf{57} (2005), no.~1, 17--60.

\bibitem[BT]{Bedos_Tuset_2003}
E.~B{\'e}dos and L.~Tuset, \emph{Amenability and co-amenability for locally
  compact quantum groups}, Internat. J. Math. \textbf{14} (2003), no.~8,
  865--884.

\bibitem[Bek]{Bekka__amen_unit_rep}
M.~E.~B. Bekka, \emph{Amenable unitary representations of locally compact
  groups}, Invent. Math. \textbf{100} (1990), no.~2, 383--401.

\bibitem[BdlHV]{Bekka_de_la_Harpe_Valette__book}
B.~Bekka, P.~de~la Harpe, and A.~Valette, \emph{Kazhdan's property ({T})}, New
  Mathematical Monographs, vol.~11, Cambridge University Press, Cambridge,
  2008.

\bibitem[BBCW]{Bhattacharya_Brannan_Chirvasitu_Wang__rig_soft_prop_resid_fin_DQG}
A.~Bhattacharya, M.~Brannan, A.~Chirvasitu, and S.~Wang, \emph{Property ({T}),
  property ({F}) and residual finiteness for discrete quantum groups}, J.
  Noncommut. Geom. \textbf{14} (2020), no.~2, 567--589.

\bibitem[BDS]{Brannan_Daws_Samei__cb_rep_of_conv_alg_of_LCQGs}
M.~Brannan, M.~Daws, and E.~Samei, \emph{Completely bounded representations of
  convolution algebras of locally compact quantum groups}, M\"unster J. Math.
  \textbf{6} (2013), 445--482.

\bibitem[BK]{Brannan_Kerr__QG_T_WM}
M.~Brannan and D.~Kerr, \emph{Quantum groups, property ({T}), and weak mixing},
  Comm. Math. Phys. \textbf{360} (2018), no.~3, 1043--1059.

\bibitem[Bou]{Bourbaki__Integration_2_eng}
N.~Bourbaki, \emph{Integration. {II}. {C}hapters 7--9}, Elements of Mathematics
  (Berlin), Springer-Verlag, Berlin, 2004, Translated from the 1963 and 1969
  French originals by Sterling K. Berberian.

\bibitem[CN]{Chen_Ng__prop_T_LCQGs}
X.~Chen and C.-K. Ng, \emph{Property {T} for locally compact quantum groups},
  Internat. J. Math. \textbf{26} (2015), no.~3, 1550024, 13 pp.

\bibitem[CHK]{chk}
A.~Chirvasitu, S.~O. Hoche, and P.~Kasprzak, \emph{Fundamental isomorphism
  theorems for quantum groups}, Expo. Math. \textbf{35} (2017), no.~4,
  390--442.

\bibitem[Con]{Connes__classification_des_facteurs}
A.~Connes, \emph{Une classification des facteurs de type {${\rm III}$}}, Ann.
  Sci. \'Ecole Norm. Sup. (4) \textbf{6} (1973), 133--252.

\bibitem[DFSW]{Daws_Fima_Skalski_White_Haagerup_LCQG}
M.~Daws, P.~Fima, A.~Skalski, and S.~White, \emph{The {H}aagerup property for
  locally compact quantum groups}, J. Reine Angew. Math. \textbf{711} (2016),
  189--229.

\bibitem[DKSS]{Daws_Kasprzak_Skalski_Soltan__closed_q_subgroups_LCQGs}
M.~Daws, P.~Kasprzak, A.~Skalski, and P.~M. So{\ldash}tan, \emph{Closed quantum
  subgroups of locally compact quantum groups}, Adv. Math. \textbf{231} (2012),
  no.~6, 3473--3501.

\bibitem[DSV]{Daws_Skalski_Viselter__prop_T}
M.~Daws, A.~Skalski, and A.~Viselter, \emph{Around {P}roperty ({T}) for quantum
  groups}, Comm. Math. Phys. \textbf{353} (2017), no.~1, 69--118.

\bibitem[{DeC}]{De_Commer__Galois_obj_cocy_tw_LCQG}
K.~{De Commer}, \emph{Galois objects and cocycle twisting for locally compact
  quantum groups}, J. Operator Theory \textbf{66} (2011), no.~1, 59--106.

\bibitem[DeCFY]{dfy}
K.~De~Commer, A.~Freslon, and M.~Yamashita, \emph{C{CAP} for universal discrete
  quantum groups}, Comm. Math. Phys. \textbf{331} (2014), no.~2, 677--701, with
  an appendix by Stefaan Vaes.

\bibitem[DVD]{vd-drab}
B.~Drabant and A.~Van~Daele, \emph{Pairing and quantum double of multiplier
  {H}opf algebras}, Algebr. Represent. Theory \textbf{4} (2001), no.~2,
  109--132.

\bibitem[Fim]{Fima__prop_T}
P.~Fima, \emph{Kazhdan's property {$T$} for discrete quantum groups}, Internat.
  J. Math. \textbf{21} (2010), no.~1, 47--65.

\bibitem[FMP]{Fima_Mukherjee_Patri_compact_bicross_prod}
P.~Fima, K.~Mukherjee, and I.~Patri, \emph{On compact bicrossed products}, J.
  Noncommut. Geom. \textbf{11} (2017), no.~4, 1521--1591.

\bibitem[Fol]{Folland__abs_harmon_anal}
G.~B. Folland, \emph{A course in abstract harmonic analysis}, Studies in
  Advanced Mathematics, CRC Press, Boca Raton, FL, 1995.

\bibitem[Haa$_1$]{Haagerup__oper_val_weights_1}
U.~Haagerup, \emph{Operator-valued weights in von {N}eumann algebras. {I}}, J.
  Funct. Anal. \textbf{32} (1979), no.~2, 175--206.

\bibitem[Haa$_2$]{Haagerup__oper_val_weights_2}
\bysame, \emph{Operator-valued weights in von {N}eumann algebras. {II}}, J.
  Funct. Anal. \textbf{33} (1979), no.~3, 339--361.

\bibitem[HR]{Hewitt_Ross__I}
E.~Hewitt and K.~A. Ross, \emph{Abstract harmonic analysis. {V}ol. {I}}, second
  ed., Grundlehren der Mathematischen Wissenschaften [Fundamental Principles of
  Mathematical Sciences], vol. 115, Springer-Verlag, Berlin-New York, 1979.

\bibitem[Izu]{Izumi__non_comm_Poisson}
M.~Izumi, \emph{Non-commutative {P}oisson boundaries and compact quantum group
  actions}, Adv. Math. \textbf{169} (2002), no.~1, 1--57.

\bibitem[KalKS]{Kalantar_Kasprzak_Skalski__open_quant_subgr}
M.~Kalantar, P.~Kasprzak, and A.~Skalski, \emph{Open quantum subgroups of
  locally compact quantum groups}, Adv. Math. \textbf{303} (2016), 322--359.

\bibitem[KalKSS]{Kalantar_Kasprzak_Skalski_Soltan__ind_LCQG_revisited}
M.~Kalantar, P.~Kasprzak, A.~Skalski, and P.~M. So{\ldash}tan, \emph{Induction
  for locally compact quantum groups revisited}, Proc. Roy. Soc. Edinburgh
  Sect. A \textbf{150} (2020), no.~2, 1071--1093.

\bibitem[KasKS]{Kasprzak_Khosravi_Soltan__int_act_QGs}
P.~Kasprzak, F.~Khosravi, and P.~M. So{\ldash}tan, \emph{Integrable actions and
  quantum subgroups}, Int. Math. Res. Not. IMRN (2018), no.~10, 3224--3254.

\bibitem[KasS]{Kasprzak_Soltan__QG_with_proj_and_ext_of_LCQG}
P.~Kasprzak and P.~M. So{\ldash}tan, \emph{Quantum groups with projection and
  extensions of locally compact quantum groups}, J. Noncommut. Geom.
  \textbf{14} (2020), no.~1, 105--123.

\bibitem[KyS]{Kyed_Soltan__prop_T_exotic_QG_norms}
D.~Kyed and P.~M. So{\ldash}tan, \emph{Property ({T}) and exotic quantum group
  norms}, J. Noncommut. Geom. \textbf{6} (2012), no.~4, 773--800.

\bibitem[Kus$_1$]{kust-alg-univ}
J.~Kustermans, \emph{Universal {$C^*$}-algebraic quantum groups arising from
  algebraic quantum groups}, preprint, arXiv:funct-an/9704006, 1997.

\bibitem[Kus$_2$]{Kustermans__LCQG_universal}
\bysame, \emph{Locally compact quantum groups in the universal setting},
  Internat. J. Math. \textbf{12} (2001), no.~3, 289--338.

\bibitem[Kus$_3$]{Kustermans__induced_corep_LCQG}
\bysame, \emph{Induced corepresentations of locally compact quantum groups}, J.
  Funct. Anal. \textbf{194} (2002), no.~2, 410--459.

\bibitem[KV$_1$]{Kustermans_Vaes__LCQG_C_star}
J.~Kustermans and S.~Vaes, \emph{Locally compact quantum groups}, Ann. Sci.
  \'Ecole Norm. Sup. (4) \textbf{33} (2000), no.~6, 837--934.

\bibitem[KV$_2$]{Kustermans_Vaes__LCQG_von_Neumann}
\bysame, \emph{Locally compact quantum groups in the von {N}eumann algebraic
  setting}, Math. Scand. \textbf{92} (2003), no.~1, 68--92.

\bibitem[KVD]{kust-alg-red}
J.~Kustermans and A.~Van~Daele, \emph{{$C^*$}-algebraic quantum groups arising
  from algebraic quantum groups}, Internat. J. Math. \textbf{8} (1997), no.~8,
  1067--1139.

\bibitem[Kye]{kyed-coh}
D.~Kyed, \emph{A cohomological description of property ({T}) for quantum
  groups}, J. Funct. Anal. \textbf{261} (2011), no.~6, 1469--1493.

\bibitem[MNW]{mnw}
T.~Masuda, Y.~Nakagami, and S.~L. Woronowicz, \emph{A {$C^\ast$}-algebraic
  framework for quantum groups}, Internat. J. Math. \textbf{14} (2003), no.~9,
  903--1001.

\bibitem[MRW]{Meyer_Roy_Woronowicz__hom_quant_grps}
R.~Meyer, S.~Roy, and S.~L. Woronowicz, \emph{Homomorphisms of quantum groups},
  M\"unster J. Math. \textbf{5} (2012), 1--24.

\bibitem[MV]{vm}
A.~Monk and C.~Voigt, \emph{Complex quantum groups and a deformation of the
  {B}aum-{C}onnes assembly map}, Trans. Amer. Math. Soc. \textbf{371} (2019),
  no.~12, 8849--8877.

\bibitem[Neu]{Neufang__amp_cb_slice_maps}
M.~Neufang, \emph{Amplification of completely bounded operators and
  {T}omiyama's slice maps}, J. Funct. Anal. \textbf{207} (2004), no.~2,
  300--329.

\bibitem[Ng]{Ng__amen_rep_Reiter}
C.-K. Ng, \emph{Amenable representations and {R}eiter's property for {K}ac
  algebras}, J. Funct. Anal. \textbf{187} (2001), no.~1, 163--182.

\bibitem[NV]{Ng_Viselter__amenability_LCQGs_coreps}
C.-K. Ng and A.~Viselter, \emph{Amenability of locally compact quantum groups
  and their unitary co-representations}, Bull. Lond. Math. Soc. \textbf{49}
  (2017), no.~3, 491--498.

\bibitem[PJ]{Petrescu_Joita__prop_T_Kac_alg}
S.~Petrescu and M.~Joi{\cedilla{t}}a, \emph{Property {$({\rm T})$} for {K}ac
  algebras}, Rev. Roumaine Math. Pures Appl. \textbf{37} (1992), no.~2,
  163--178.

\bibitem[PW]{pw}
P.~Podle\'{s} and S.~L. Woronowicz, \emph{Quantum deformation of {L}orentz
  group}, Comm. Math. Phys. \textbf{130} (1990), no.~2, 381--431.

\bibitem[Roy$_1$]{roy-XXr}
S.~Roy, \emph{The {D}rinfeld double for {$C^*$}-algebraic quantum groups}, J.
  Operator Theory \textbf{74} (2015), no.~2, 485--515.

\bibitem[Roy$_2$]{roy-wyw}
\bysame, \emph{The {H}aagerup property for {D}rinfeld doubles}, Math. Proc. R.
  Ir. Acad. \textbf{115A} (2015), no.~2, 9 pp.

\bibitem[SW$_1$]{wor-mult-vLr}
P.~M. So{\ldash}tan and S.~L. Woronowicz, \emph{A remark on manageable
  multiplicative unitaries}, Lett. Math. Phys. \textbf{57} (2001), no.~3,
  239--252.

\bibitem[SW$_2$]{wor-mult-PxG}
\bysame, \emph{From multiplicative unitaries to quantum groups. {II}}, J.
  Funct. Anal. \textbf{252} (2007), no.~1, 42--67.

\bibitem[Str]{Stratila__mod_thy}
{\cedilla{S}}.~Str{\u{a}}til{\u{a}}, \emph{Modular theory in operator
  algebras}, Abacus Press, Tunbridge Wells, England, 1981.

\bibitem[Tak$_1$]{Takesaki__Tomita_thy}
M.~Takesaki, \emph{Tomita's theory of modular {H}ilbert algebras and its
  applications}, Lecture Notes in Mathematics, Vol. 128, Springer-Verlag,
  Berlin, 1970.

\bibitem[Tak$_2$]{Takesaki__book_vol_2}
\bysame, \emph{Theory of operator algebras. {II}}, Encyclopaedia of
  Mathematical Sciences, vol. 125, Springer-Verlag, Berlin, 2003.

\bibitem[Vae$_1$]{Vaes__PhD}
S.~Vaes, \emph{Locally compact quantum groups}, Ph.D. thesis, KU Leuven, 2001,
  available at \url{https://perswww.kuleuven.be/~u0018768/artikels/phd.pdf}.

\bibitem[Vae$_2$]{Vaes__Radon_Nikodym}
\bysame, \emph{A {R}adon-{N}ikodym theorem for von {N}eumann algebras}, J.
  Operator Theory \textbf{46} (2001), no.~3, suppl., 477--489.

\bibitem[Vae$_3$]{Vaes__unit_impl_LCQG}
\bysame, \emph{The unitary implementation of a locally compact quantum group
  action}, J. Funct. Anal. \textbf{180} (2001), no.~2, 426--480.

\bibitem[VD]{vd-alg}
A.~Van~Daele, \emph{An algebraic framework for group duality}, Adv. Math.
  \textbf{140} (1998), no.~2, 323--366.

\bibitem[VVai]{Vaes_Vainerman__low_dim_LCQGs}
S.~Vaes and L.~Vainerman, \emph{On low-dimensional locally compact quantum
  groups}, Locally compact quantum groups and groupoids ({S}trasbourg, 2002),
  IRMA Lect. Math. Theor. Phys., vol.~2, de Gruyter, Berlin, 2003,
  pp.~127--187.

\bibitem[VVal]{Vaes_Valvekens__prop_T_DQG_subfactor_tri_rep}
S.~Vaes and M.~Valvekens, \emph{Property ({T}) discrete quantum groups and
  subfactors with triangle presentations}, Adv. Math. \textbf{345} (2019),
  382--428.

\bibitem[Wor$_1$]{Woronowicz__CMP}
S.~L. Woronowicz, \emph{Compact matrix pseudogroups}, Comm. Math. Phys.
  \textbf{111} (1987), no.~4, 613--665.

\bibitem[Wor$_2$]{wor-mult-wuo}
S.~L. Woronowicz, \emph{From multiplicative unitaries to quantum groups},
  Internat. J. Math. \textbf{7} (1996), no.~1, 127--149.

\bibitem[Zim]{zim-erg}
R.~J. Zimmer, \emph{Ergodic theory and semisimple groups}, Monographs in
  Mathematics, vol.~81, Birkh\"{a}user Verlag, Basel, 1984.

\end{thebibliography}

\end{document}